\newcommand{\restr}[2]{\left.#1\right|_{#2}}
\newcommand{\abs}[1]{\left|#1\right|}
\numberwithin{equation}{section}
\newtheorem{thm}{Theorem}[section]
  \theoremstyle{plain}
  \newtheorem{lem}[thm]{Lemma}
  \theoremstyle{plain}
  \newtheorem{prop}[thm]{Proposition}
  \theoremstyle{plain}
  \newtheorem{cor}[thm]{Corollary}
  \theoremstyle{plain}
  \newtheorem{definition}[thm]{Definition}
    \theoremstyle{definition}
\newtheorem{rem}[thm]{Remark}
\newcommand{\N}{{\mathbb N}}
\newcommand{\Z}{{\mathbb Z}}
\newcommand{\R}{{\mathbb R}}
\newcommand{\eps}{\varepsilon}
\newcommand{\M}{{\mathcal M}}
\newcommand{\bdry}[1]{\partial #1}
\newcommand{\isom}{\approx}
\newcommand{\norm}[2][]{\left\|#2\right\|_{#1}}
\renewcommand{\o}{\text{o}}
\newcommand{\PS}[1]{$(\text{PS})_{#1}$}
\newcommand{\QED}{\mbox{\qedhere}}
\newcommand{\seq}[1]{\left(#1\right)}
\newcommand{\set}[1]{\left\{#1\right\}}
\newenvironment{enumarab}{\begin{enumerate}

}{\end{enumerate}}
\newenvironment{enumroman}{\begin{enumerate}

}{\end{enumerate}}
\title[Fractional $p$-Laplacian problems]{Existence results for fractional \\ $p$-Laplacian
problems via Morse theory}
\author[A.\ Iannizzotto]{Antonio Iannizzotto}
\author[S.\ Liu]{Shibo Liu}
\author[K.\ Perera]{Kanishka Perera}
\author[M.\ Squassina]{Marco Squassina}
\address{Dipartimento di Informatica
\newline\indent
Universit\`a degli Studi di Verona
\newline\indent
Strada Le Grazie I-37134 Verona, Italy}
\email{antonio.iannizzotto@univr.it}
\address{School of Mathematical Sciences
\newline\indent
Xiamen University
\newline\indent
Xiamen 361005, China}
\email{liusb@xmu.edu.cn}
\address{Department of Mathematical Sciences
\newline\indent
Florida Institute of Technology
\newline\indent
150 W University Blvd, Melbourne, FL 32901, USA}
\email{kperera@fit.edu}
\address{Dipartimento di Informatica
\newline\indent
Universit\`a degli Studi di Verona
\newline\indent
Strada Le Grazie I-37134 Verona, Italy}
\email{marco.squassina@univr.it}
\subjclass[2000]{35P15, 35P30, 35R11}
\keywords{Fractional $p$-Laplacian problems, Morse theory, existence and multiplicity of weak solutions, regularity of solutions.}
\begin{document}

\begin{abstract}
We investigate a class of quasi-linear nonlocal problems, including
as a particular case semi-linear problems involving the fractional Laplacian
and arising in the framework of continuum mechanics, phase transition phenomena,
population dynamics and game theory.
Under different growth assumptions on the reaction term, we obtain
various existence as well as finite multiplicity results by means of variational and topological
methods and, in particular, arguments from Morse theory.
\end{abstract}

\maketitle


\bigskip
\begin{center}
\begin{minipage}{10cm}
\footnotesize
\tableofcontents
\end{minipage}
\end{center}

\bigskip
\smallskip


\section{Introduction}
\subsection{General overview}
Let $\Omega$ be a bounded domain in $\R^N$, $N \ge 2$, with Lipschitz boundary $\bdry{\Omega}$. Recently, much attention has been paid 
to the semi-linear problem
\begin{equation}
\begin{cases}
(- \Delta)^s\, u  = f(x,u)  &\text{in $\Omega$}   \\
u = 0 & \text{in $\R^N \setminus \Omega$,}
\end{cases}
\end{equation}
from the point of view of existence, nonexistence and regularity, where
$f$ is a Carath\'{e}odory function satisfying suitable growth conditions.
Several existence results via variational methods are proved in a series of papers of {\sc Servadei $\&$ Valdinoci} \cite{SV, SV1, SV2, SV3} (see also {\sc Iannizzotto $\&$ Squassina} \cite{IS1} for the special case $s=1/2$, $p=2$ and $N=1$, with exponential nonlinearity). The issues of regularity and non-existence of solutions are examined by {\sc Caffarelli $\&$ Silvestre} \cite{CS}, {\sc Ros Oton $\&$ Serra} \cite{RS, RS1,RS2}. The corresponding equation in $\R^N$ is studied by {\sc Cabr\'e \& Sire} \cite{CS1, CS2}. Although the fractional Laplacian operator
$(-\Delta)^s$, and more generally pseudodifferential operators, have been a classical topic in harmonic analysis and partial differential equations for a long time,
the interest in such operators has constantly increased during the last few years.
Nonlocal operators such as $(-\Delta)^s$ naturally arise
in continuum mechanics, phase transition phenomena,
population dynamics and game theory, see e.g.\ {\sc Caffarelli} \cite{C} and the references therein.
In the works of {\sc Metzler $\&$ Klafter} \cite{MK,MK1}, the description of anomalous diffusion
via fractional dynamics is investigated and various fractional partial differential
equations are derived from {\em L\'evy random walk} models, extending
{\em Brownian walk} models in a natural way. In particular, in the paper of {\sc Laskin} \cite{L} a fractional Schr\"odinger
equation was obtained, which extends to a L\'evy framework the classical result that path integral over Brownian trajectories leads to
the Schr\"odinger equation. Fractional operators are also involved in financial mathematics, since L\'ewy processes with jumps
revealed as more appropriate models of stock pricing, compared to the
Brownian ones used in the celebrated Black $\&$ Sholes option pricing model (see {\sc Applebaum} \cite{A}).
\vskip2pt
\noindent
Very recently, a new nonlocal and nonlinear operator was considered, namely for $p \in (1,\infty)$, $s \in (0,1)$ and $u$ smooth enough
\begin{equation}
\label{plap}
(- \Delta)_p^s\, u(x) = 2\, \lim_{\varepsilon \searrow 0} \int_{\R^N \setminus B_\varepsilon(x)} \frac{|u(x) - u(y)|^{p-2}\, (u(x) - u(y))}{|x - y|^{N+sp}}\, dy, \quad x \in \R^N,
\end{equation}
consistent, up to some normalization constant depending upon $N$ and $s$, with the linear fractional Laplacian $(-\Delta)^s$ in the case $p=2$. For the motivations that lead to 
the study of such operators, we refer the reader again to the review paper \cite{C}. This operator, known as the {\em fractional $p$-Laplacian}, leads naturally to the study of the quasi-linear problem
\begin{equation} \label{prob}
\begin{cases}
(- \Delta)_p^s\, u  = f(x,u) & \text{in $\Omega$} \\
u  = 0 & \text{in $\R^N \setminus \Omega$.}
\end{cases}
\end{equation}
One typical feature of the aforementioned operators is the {\em nonlocality}, in the sense that the value of $(-\Delta)^s_p u(x)$ at any point $x\in\Omega$ depends not only on the values of $u$ on the whole $\Omega$, but actually on the whole $\R^N$, since $u(x)$ represents the expected value of a random variable tied to a process randomly jumping arbitrarily far from the point $x$. While in the classical case, by the continuity properties of the Brownian motion, at the exit time from $\Omega$ one necessarily is on $\partial\Omega$, due to the jumping nature of the process,
at the exit time one could end up anywhere outside $\Omega$. In this sense, the natural non-homogeneous Dirichlet boundary condition consists in assigning the values of $u$ in $\R^N\setminus\Omega$ rather than mererly on $\partial\Omega$.
Then, it is reasonable to search for solution in the space of functions $u\in W^{s,p}(\R^N)$ vanishing on the outside of $\Omega$.
It should be pointed out that, in a bounded domain, this is not the only possible
way of providing a formulation of the problem.
\vskip2pt
\noindent
In the works of {\sc Franzina \& Palatucci} \cite{FP} and of {\sc Lindgren \& Linqvist} \cite{LL}, the eigenvalue problem associated with
$(-\Delta)^s_p u$ is studied, and particularly some properties of the first eigenvalue and of the higher order (variational) eigenvalues are obtained. Then, {\sc Iannizzotto \& Squassina} \cite{IS} 
obtained some Weyl-type estimates for the asymptotic behaviour of variational eigenvalues $\lambda_j$ defined by a suitable cohomological index. From the point of view of regularity
theory, some results can be found in \cite{LL} even though that work is most focused on the case where $p$ is large and the solutions
inherit some regularity directly from the functional embeddings themselves. More recently {\sc Di Castro, Kuusi \& Palatucci} \cite{DKP} and {\sc Brasco \& Franzina} \cite{BF} obtained relevant results about the local boundedness and
H\"older continuity for the solutions to the problem of finding $(s,p)$-harmonic functions $u$, that is $(-\Delta)^s_p u=0$ in $\Omega$ with $u=g$ on $\R^N\setminus\Omega$,
for some function $g$, providing an extension of results by De Giorgi-Nash-Moser to the nonlocal nonlinear framework.
Finally, in the work of {\sc Bjorland, Caffarelli \& Figalli} \cite{BC}, some higher regularity is obtained when $s$ gets close to $1$, by showing that the solutions converge
to the solutions with the $p$-Laplace 
operator ${\rm div}(|\nabla u|^{p-2}\nabla u)$, whenever $s\to 1$.

\subsection{Plan of the paper}
In the present paper, we aim at establishing existence and (finite) multiplicity of the weak solutions to \eqref{prob} by making use of advanced tools of Morse theory. The contents of the paper are as follows:
\begin{itemize}
\item In Section~\ref{preliminary}, we introduce some preliminary notions and notations
and set the functional framework of the problem. More precisely, in Subsection \ref{varform} we establish the variational setting for problem \eqref{prob}, in Subsection \ref{eigenvaluep} we recall some basic features about the variational eigenvalues of the operator $(-\Delta)^s_p$ and related topics, and in Subsection \ref{morse} we introduce critical groups and some related notions.
\item In Section~\ref{inftybb} we establish a priori $L^\infty$-bounds for the solutions of problem \eqref{prob} under suitable growth conditions on the nonlinearity. These regularity results
are used also in the existence theorems proved in the subsequent sections. To our knowledge, $L^\infty$-bounds were previously obtained only for the eigenvalue problem
$(-\Delta)^s_pu=\lambda |u|^{p-2}u$, see \cite{FP}. The main result of this section
is Theorem~\ref{3main}.
\item In Section~\ref{psuper}, we deal with the {\em $p$-superlinear case}, namely $f(x,t)=\lambda|t|^{p-2}t+g(x,t)$, with $g(x,\cdot)$ vanishing at zero, proving via Morse-theoretical methods the existence of non-zero solutions for all values
of the real parameter $\lambda$. The main result of this section is Theorem~\ref{4main}.
\item In Section~\ref{coercive}, we deal with the {\em coercive case}, including the case when $f(x,\cdot)$ is $p$-sublinear at infinity, proving via truncations the existence of a positive solution $u_+$ and of a negative solution $u_-$ and the computation of critical groups at zero yields the existence of a third non-zero solution.  The main result of this section is Theorem~\ref{5main}.
\item In Section~\ref{asymptotically}, we deal with the {\em asymptotically $p$-linear case}, namely $f(x,t)=\lambda|t|^{p-2}t+g(x,t)$ with $g(x,\cdot)$ vanishing at infinity, proving some existence results via the computation of critical groups at infinity and a multiplicity result, for $\lambda$ large enough, via the Mountain Pass Theorem.
The main results of this section are Theorems~\ref{6ex1}, \ref{6ex2} and \ref{6mult}.
\item In Section~\ref{pohoz}, we discuss Poho\v{z}aev identity and consequent nonexistence results in star-shaped domains.
\end{itemize}
For a short introduction to fractional Sobolev spaces, we shall refer to the Hitchhiker's guide of {\sc Di Nezza, Palatucci \& Valdinoci} \cite{DPV}. Concerning the Morse-theoretic apparatus, topological tools as well as existence and multiplicity results
for the local case $s=1$, we shall refer the reader to the monograph of {\sc Perera, Agarwal \& O'Regan} \cite{PAO}, to the classical books by {\sc Chang} \cite{C1}, {\sc Mawhin \& Willem} \cite{MW}, {\sc Milnor} \cite{M} and to the references therein.
\vskip6pt
\noindent
{\bf Acknowledgements.} Shibo Liu was supported by National Natural Science Foundation of China (No.\ 11171204). Marco Squassina was partially  supported by 2009 MIUR project:
Variational and Topological Methods in the Study of Nonlinear Phenomena.
The authors would like to thank Xavier Ros-Oton for precious bibliographic information on the regularity up to the boundary
of the solutions to the problem, as well as Sun-Ra Mosconi for some useful remarks concerning Section~\ref{coercive}.

\section{Preliminaries}
\label{preliminary}

\noindent
In this preliminary section, for the reader's convenience, we collect some basic results that will be used in the forthcoming sections. In the following, for any functional $\Phi$ and any Banach space $(X,\|\cdot\|)$ we will denote
\[\Phi^c=\{u\in X\,:\,\Phi(u)\le c\} \ (c\in\R),\]
\[\overline B_\rho(u_0)=\{u\in X\,:\,\|u-u_0\|\le\rho\} \ (u_0\in X,\, \rho>0).\]
Moreover, in the proofs of our results, $C$ will denote a positive constant (whose value may change case by case).

\subsection{Variational formulation of the problem}
\label{varform}

\noindent
Let $\Omega\subset\R^N$ be a bounded domain with smooth boundary $\partial\Omega$, and for all $1\le\nu\le\infty$ denote by $\|\cdot\|_\nu$ the norm of $L^\nu(\Omega)$. Moreover, let $0<s<1<p<\infty$ be real numbers, and the fractional critical exponent be defined as
\[p^*_s= \begin{cases}
\frac{Np}{N-sp} & \mbox{if $sp<N$} \\
\infty & \mbox{if $sp\ge N$}.
\end{cases}\]
First we introduce a variational setting for problem \eqref{prob}. The Gagliardo seminorm is defined for all measurable function $u:\R^N\to\R$ by
\[[u]_{s,p} = \Big(\int_{\R^{2N}} \frac{|u(x) - u(y)|^p}{|x - y|^{N+sp}}{\rm d}x{\rm d}y\Big)^{1/p}.\]
We define the fractional Sobolev space
\[W^{s,p}(\R^N)=\{u\in L^p(\R^N)\,:\,u \ \mbox{measurable, $[u]_{s,p}<\infty$}\},\]
endowed with the norm
\[\|u\|_{s,p}=\big(\|u\|_p^p+[u]_{s,p}^p\big)^\frac{1}{p}.\]
For a detailed account on the properties of $W^{s,p}(\R^N)$ we refer the reader to \cite{DPV}. We shall work in the closed linear subspace
\[X(\Omega)=\{u\in W^{s,p}(\R^N)\,:\,u(x)=0 \ \mbox{a.e. in $\R^N\setminus\Omega$}\},\]
which can be equivalently renormed by setting $\|\cdot\|=[\,\cdot\,]_{s,p}$ (see \cite[Theorem 7.1]{DPV}). It is readily seen that $(X(\Omega),\|\cdot\|)$ is a uniformly convex Banach space and that the embedding $X(\Omega)\hookrightarrow L^\nu(\Omega)$ is continuous for all $1\le\nu\le p^*_s$, and compact for all $1\le\nu<p^*_s$ (see \cite[Theorems 6.5, 7.1]{DPV}). The dual space of $(X(\Omega),\|\cdot\|)$ is denoted by $(X(\Omega)^*,\|\cdot\|_*)$.
\vskip2pt
\noindent
We rephrase variationally the fractional $p$-Laplacian as the nonlinear operator $A:X(\Omega)\to X(\Omega)^*$ defined for all $u,v\in X(\Omega)$ by
\[\langle A(u),v\rangle=\int_{\R^{2N}} \frac{|u(x) - u(y)|^{p-2}(u(x)-u(y))(v(x)-(y))}{|x - y|^{N+sp}}{\rm d}x{\rm d}y.\]
It can be seen that, if $u$ is smooth enough, this definition coincides with that of \eqref{plap}. A {\em (weak) solution} of problem \eqref{prob} is a function $u\in X(\Omega)$ such that
\begin{equation}\label{weak}
\langle A(u),v\rangle=\int_\Omega f(x,u)v \, {\rm d}x
\end{equation}
for all $v\in X(\Omega)$.
\vskip2pt
\noindent
Clearly, $A$ is odd, $(p-1)$-homogeneous, and satisfies for all $u\in X(\Omega)$
\[\langle A(u),u\rangle=\|u\|^p, \quad \|A(u)\|_*\le\|u\|^{p-1}.\]
Since $X(\Omega)$ is uniformly convex, by \cite[Proposition 1.3]{PAO}, $A$ satisfies the following compactness condition:
\begin{itemize}
\item[$({\bf S})$] If $(u_n)$ is a sequence in $X(\Omega)$ such that $u_n\rightharpoonup u$ in $X(\Omega)$ and $\langle A(u_n),u_n-u\rangle\to 0$, then $u_n\to u$ in $X(\Omega)$.
\end{itemize}
Moreover, $A$ is a potential operator, precisely $A$ is the G\^ateaux derivative of the functional $u\mapsto\|u\|^p/p$ in $X(\Omega)$. Thus, $A$ satisfies all the structural assumptions of \cite{PAO}.
\vskip2pt
\noindent
Now we introduce the minimal hypotheses on the reaction term of \eqref{prob}:
\begin{itemize}
\item[${\bf H}_2$] $f:\Omega\times\R\to\R$ is a Carath\'eodory mapping, $F(x,t)=\int_0^t f(x,\tau){\rm d}\tau$ for all $(x,t)\in\Omega\times\R$, and
\[|f(x,t)|\le a(1+|t|^{r-1})\]
a.e. in $\Omega$ and for all $t\in\R$ ($a>0$, $1<r<p^*_s$).
\end{itemize}
We set for all $u\in X(\Omega)$
\begin{equation}\label{2phi}
\Phi(u)=\frac{\|u\|^p}{p}-\int_\Omega F(x,u){\rm d}x.
\end{equation}
By ${\bf H}_2$, we have $\Phi\in C^1(X(\Omega))$. We denote by $K(\Phi)$ the set of all critical points of $\Phi$. If $u\in K(\Phi)$, then \eqref{weak} holds for all $v\in X(\Omega)$, i.e., $u$ is a weak solution of \eqref{prob}. We recall now the Palais-Smale and the Cerami compactness conditions in a set $U\subseteq X$:
\begin{itemize}
\item[${\bf PS}$] every sequence $(u_n) $ in $U$ such that $(\Phi(u_n))$ is bounded in $\R$ and $\Phi'(u_n)\to 0$ in $X(\Omega)^*$ admits a convergent subsequence;
\item[${\bf C}$] every sequence $(u_n) $ in $U$ such that $(\Phi(u_n))$ is bounded in $\R$ and $(1+\|u_n\|)\Phi'(u_n)\to 0$ in $X(\Omega)^*$ admits a convergent subsequence.
\end{itemize}
Such conditions hold for our $\Phi$, provided that the boundedness of the sequence is assumed:

\begin{prop}\label{2ps}
If ${\bf H}_2$ holds, and every sequence $(u_n) $ in $X(\Omega)$ such that $\Phi'(u_n)\to 0$ (respectively, $(1+\|u_n\|)\Phi'(u_n)\to 0$) in $X(\Omega)^*$ is bounded, then $\Phi$ satisfies ${\bf PS}$ (respectively, ${\bf C}$) in $X(\Omega)$.
\end{prop}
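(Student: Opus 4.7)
The strategy is a textbook $(S)$-type argument, reducing everything to the structural property of the operator $A$ stated before the proposition. Suppose $(u_n)\subset X(\Omega)$ is a sequence satisfying the hypotheses of ${\bf PS}$ (or ${\bf C}$); by assumption, $(u_n)$ is bounded in $X(\Omega)$. Since $X(\Omega)$ is uniformly convex, hence reflexive, up to a subsequence $u_n\weaklyto u$ in $X(\Omega)$ for some $u\in X(\Omega)$. From the compact embedding $X(\Omega)\hookrightarrow L^r(\Omega)$ (which applies because ${\bf H}_2$ gives $1<r<p^*_s$), we obtain $u_n\to u$ in $L^r(\Omega)$, and passing to a further subsequence we may assume $u_n\to u$ pointwise a.e.\ in $\Omega$ and dominated by a function in $L^r(\Omega)$.

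The core computation is to show that $\langle A(u_n),u_n-u\rangle\to 0$, which by condition $({\bf S})$ will yield $u_n\to u$ in $X(\Omega)$ and close the proof. Writing $\Phi'(u_n)=A(u_n)-N_f(u_n)$, where $N_f(u_n)$ denotes the element of $X(\Omega)^*$ associated with $v\mapsto\int_\Omega f(x,u_n)v\,{\rm d}x$, we decompose
\[
\langle A(u_n),u_n-u\rangle=\langle\Phi'(u_n),u_n-u\rangle+\int_\Omega f(x,u_n)(u_n-u)\,{\rm d}x.
\]
Since $(u_n-u)$ is bounded in $X(\Omega)$, and either $\Phi'(u_n)\to 0$ in $X(\Omega)^*$ directly (for ${\bf PS}$) or $(1+\|u_n\|)\Phi'(u_n)\to 0$ together with boundedness of $(u_n)$ gives the same conclusion (for ${\bf C}$), the first term vanishes in the limit.

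For the second term, the growth condition in ${\bf H}_2$ gives $|f(x,u_n)|\le a(1+|u_n|^{r-1})$, so that the family $\seq{f(\cdot,u_n)}$ is bounded in $L^{r'}(\Omega)$, with $r'=r/(r-1)$ the conjugate exponent. Combining H\"older's inequality with the strong convergence $u_n\to u$ in $L^r(\Omega)$, we deduce $\int_\Omega f(x,u_n)(u_n-u)\,{\rm d}x\to 0$. Thus $\langle A(u_n),u_n-u\rangle\to 0$, and condition $({\bf S})$ gives $u_n\to u$ in $X(\Omega)$, proving both ${\bf PS}$ and ${\bf C}$ in one stroke.

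I do not anticipate any real obstacle: the only point that requires a little care is verifying that the nonlinear term is indeed weak-to-weak continuous on bounded sets via the subcritical growth, but this is a standard Nemytskii-operator argument exploiting the compact embedding $X(\Omega)\hookrightarrow L^r(\Omega)$ guaranteed by the fractional Sobolev theory recalled above. The $(S)$-property of the fractional $p$-Laplacian operator, imported from \cite[Proposition~1.3]{PAO} via uniform convexity of $X(\Omega)$, does all the heavy lifting.
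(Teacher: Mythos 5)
Your proof is correct and follows essentially the same route as the paper: both decompose $\langle A(u_n),u_n-u\rangle=\langle\Phi'(u_n),u_n-u\rangle+\int_\Omega f(x,u_n)(u_n-u)\,{\rm d}x$, kill the first term by the (weighted) vanishing of $\Phi'(u_n)$ together with boundedness, estimate the second via the growth condition in ${\bf H}_2$ and H\"older against the strong $L^r$-convergence from the compact embedding, and conclude with the $({\bf S})$-property of $A$. The only cosmetic difference is that you phrase the H\"older step through boundedness of $f(\cdot,u_n)$ in $L^{r'}(\Omega)$, whereas the paper writes the equivalent bound $C(1+\|u_n\|_r^{r-1})\|u_n-u\|_r$ directly.
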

\begin{proof}
We deal with ${\bf PS}$. Passing to a relabeled subsequence, we have $u_n\rightharpoonup u$ in $X(\Omega)$, and $u_n\to u$ in $L^r(\Omega)$. So we have for all $n\in\N$
\begin{align*}
|\langle A(u_n),u_n-u\rangle| &= \Big|\langle\Phi'(u_n),u_n-u\rangle+\int_\Omega f(x,u_n)(u_n-u){\rm d}x\Big| \\
&\le \|\Phi'(u_n)\|_*\|u_n-u\|+\int_\Omega(1+|u_n|^{r-1})|u_n-u|{\rm d}x \\
&\le \|\Phi'(u_n)\|_*\|u_n-u\|+C(1+\|u_n\|_r^{r-1})\|u_n-u\|_r,
\end{align*}
and the latter tends to $0$ as $n\to\infty$. So, by the $({\bf S})$-property of $A$, we have $u_n\to u$ in $X(\Omega)$.
\end{proof}

\noindent
The following strong maximum principle (see \cite[Theorem A.1]{BF}, a consequence of \cite[Lemma 1.3]{DKP}) will be useful in the proof of some of our results:

\begin{prop}\label{mp}
If $u\in X(\Omega)\setminus\{0\}$ is such that $u(x)\geq 0$ a.e. in $\Omega$ and
\[\langle A(u),v\rangle\geq 0\]
for all $v\in X(\Omega)$, $v(x)\geq 0$ a.e. in $\Omega$, then $u(x)>0$ a.e. in $\Omega$.
\end{prop}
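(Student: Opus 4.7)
The plan is to read the hypothesis as saying precisely that (the zero extension of) $u$ is a non-negative weak supersolution of the homogeneous fractional $p$-Laplace equation $(-\Delta)_p^s u = 0$ in $\Omega$, and then invoke the non-local De Giorgi--Nash--Moser machinery cited in the statement. Since $u \in X(\Omega)$ we have $u \equiv 0$ in $\R^N \setminus \Omega$ and $u \ge 0$ a.e.\ in $\Omega$, so $u \ge 0$ a.e.\ in the whole of $\R^N$; in particular the non-local tail is non-negative and will cause no issue in the comparison estimates.

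First I would fix an arbitrary point $x_0 \in \Omega$ and a ball $B_{2r}(x_0) \subset \Omega$, and apply the logarithmic lemma of Di~Castro--Kuusi--Palatucci \cite[Lemma 1.3]{DKP} (or equivalently the weak Harnack inequality for fractional $p$-supersolutions): for non-negative weak supersolutions one has an estimate of the form
\[
\left( \fint_{B_r(x_0)} u^q \, {\rm d}x \right)^{1/q} \le C\, \underset{B_r(x_0)}{\mathrm{ess\,inf}}\, u + \text{tail term},
\]
valid for some $q > 0$, where the tail term involves only values of $u$ outside $B_{2r}(x_0)$, hence is non-negative here. Because $u \not\equiv 0$ and $u \ge 0$, one can choose $x_0$ to be a Lebesgue density point of $\{u > 0\}$, so the left-hand side is strictly positive; this forces $\mathrm{ess\,inf}_{B_r(x_0)} u > 0$, i.e.\ $u > 0$ a.e.\ on $B_r(x_0)$. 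This is essentially the content of Brasco--Franzina \cite[Theorem A.1]{BF} to which the statement refers.

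To pass from positivity on one ball to positivity a.e.\ in $\Omega$, I would run a standard chaining argument exploiting the connectedness of $\Omega$: given an arbitrary $y \in \Omega$, connect $y$ to $x_0$ by a compact path inside $\Omega$, cover the path by finitely many balls with bounded overlap and whose doubles are still contained in $\Omega$, and iterate the weak Harnack step along the chain. At each step the ball already contains a subset of positive measure on which $u$ is positive (inherited from the previous ball), so the estimate applies and propagates strict positivity.

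The one genuinely non-classical obstacle is that, unlike the local $p$-Laplacian, one cannot use a pointwise Hopf-type argument; the nonlocality forces the whole positivity statement to be obtained through the integral weak Harnack / logarithmic estimate. This is exactly what was established in \cite{DKP,BF}, so once the hypothesis $\langle A(u), v\rangle \ge 0$ on non-negative test functions is identified with the supersolution property, the proof essentially reduces to invoking these references and performing the covering argument described above.
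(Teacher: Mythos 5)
The paper does not actually prove this proposition: it is stated as a known result, with the proof delegated entirely to the citation ``see \cite[Theorem A.1]{BF}, a consequence of \cite[Lemma 1.3]{DKP}''. Your proposal is therefore not an alternative to the paper's argument but a reconstruction of the cited one, and in outline it is the correct reconstruction: identify the hypothesis with the non-negative weak supersolution property, use the logarithmic lemma (equivalently the weak Harnack inequality) of \cite{DKP} to obtain the local dichotomy ``$u\equiv 0$ a.e.\ or $u>0$ a.e.\ on a ball'', and propagate by a chaining argument over the connected set $\Omega$.

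One step is stated imprecisely enough that, as written, the logic does not close. You write the weak Harnack inequality in the form
\[
\Big(|B_r|^{-1}\!\int_{B_r(x_0)} u^q \, {\rm d}x \Big)^{1/q} \le C\, \underset{B_r(x_0)}{\mathrm{ess\,inf}}\, u + \text{tail term},
\]
and argue that since the left-hand side is positive, $\mathrm{ess\,inf}\,u>0$. But if the tail term is merely ``non-negative'' and sits additively on the right-hand side, a positive left-hand side forces nothing about $\mathrm{ess\,inf}\,u$. The point that makes the argument work is sharper: in the weak Harnack inequality for supersolutions the tail that appears is the tail of the \emph{negative part} $u^-$ (it enters because the nonlocal contribution from outside the ball could push a supersolution down), and here $u\ge 0$ a.e.\ on all of $\R^N$ (being zero outside $\Omega$ and non-negative inside), so that tail vanishes identically and the inequality collapses to $\big(|B_r|^{-1}\int_{B_r} u^q\big)^{1/q}\le C\,\mathrm{ess\,inf}_{B_r} u$, from which positivity follows. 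Alternatively, the logarithmic lemma of \cite[Lemma 1.3]{DKP} applied to $u+\delta$ and the limit $\delta\to 0$ yields the local dichotomy directly, which is how \cite[Theorem A.1]{BF} proceeds. With that correction your covering argument over $\Omega$ finishes the proof as intended.
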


\subsection{An eigenvalue problem}
\label{eigenvaluep}

We consider the nonlinear eigenvalue problem
\begin{equation}\label{eigen}
\left\{  \begin{array}{ll}
    (- \Delta)_p^s\, u=\lambda|u|^{p-2}u &\mbox{in $\Omega$} \\
    u=0 &\mbox{on $\R^N\setminus\Omega$},
        \end{array}\right.
\end{equation}
depending on the parameter $\lambda\in\R$. If \eqref{eigen} admits a weak solution $u\in X(\Omega)\setminus\{0\}$, then $\lambda$ is an {\em eigenvalue} and $u$ is a $\lambda$-{\em eigenfunction}. The set of all eigenvalues is referred to as the {\em spectrum} of $(-\Delta)^s_p$ in $X(\Omega)$ and denoted by $\sigma(s,p)$. As in the classical case of the $p$-Laplacian, the structure of $\sigma(s,p)$ is not completely known yet, but many properties have been detected by several authors, see for instance \cite{FP, IS, LL}. Here we recall only the results that we will use in the forthcoming sections.
\vskip2pt
\noindent
We already know from continuous embedding that the Rayleigh quotient
\begin{equation}\label{rayquot}
\lambda_1=\inf_{u\in X(\Omega)\setminus\{0\}}\frac{\|u\|^p}{\|u\|_p^p}
\end{equation}
lies in $(0,\infty)$. The number $\lambda_1$ plays an important role in the study of problem \eqref{eigen}. We list below some spectral properties of $(-\Delta)^s_p$:

\begin{prop}\label{spec}
The eigenvalues and eigenfunctions of \eqref{eigen} have the following properties:
\begin{itemize}
\item[$(i)$] $\lambda_1={\rm min}\, \sigma(s,p)$ is an isolated point of $\sigma(s,p)$;
\item[$(ii)$] all $\lambda_1$-eigenfunctions are proportional, and if $u$ is a $\lambda_1$-eigenfunction, then either $u(x)>0$ a.e. in $\Omega$ or $u(x)<0$ a.e. in $\Omega$;
\item[$(iii)$] if $\lambda\in\sigma(s,p)\setminus\{\lambda_1\}$ and $u$ is a $\lambda$-eigenfunction, then $u$ changes sign in $\Omega$;
\item[$(iv)$] all eigenfunctions are in $L^\infty(\Omega)$;
\item[$(v)$] $\sigma(s,p)$ is a closed set.
\end{itemize}
\end{prop}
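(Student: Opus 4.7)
The plan is to handle the five items individually, relying on the variational characterization \eqref{rayquot} of $\lambda_1$, the $({\bf S})$-property of $A$, the strong maximum principle (Proposition~\ref{mp}), the $L^\infty$-bound Theorem~\ref{3main}, and a nonlocal Picone-type inequality whose fractional analogue is worked out in \cite{FP,LL}. Since the statement is essentially a compilation of known results from \cite{FP,IS,LL}, the task is to recast those facts within the present functional framework.

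For $(i)$ and $(ii)$: a minimizing sequence $u_n$ in \eqref{rayquot} with $\|u_n\|_p=1$ is bounded in $X(\Omega)$; by compactness of $X(\Omega)\hookrightarrow L^p(\Omega)$ and weak lower semicontinuity of $\|\cdot\|$ one obtains a minimizer $u_1\in X(\Omega)\setminus\{0\}$, and a Lagrange multiplier argument makes $u_1$ a $\lambda_1$-eigenfunction; $\lambda_1=\min\sigma(s,p)$ is then immediate from \eqref{rayquot}. Since $||a|-|b||\le|a-b|$, one has $[|u_1|]_{s,p}\le[u_1]_{s,p}$, so $|u_1|$ is itself a minimizer hence a nonnegative $\lambda_1$-eigenfunction; Proposition~\ref{mp} upgrades this to $|u_1|>0$ a.e., and a strict-inequality analysis (strict unless $u_1$ has constant sign) forces $u_1$ to be of constant sign. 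Proportionality of any two positive $\lambda_1$-eigenfunctions follows from the nonlocal Picone inequality. For isolation, assume by contradiction $\mu_n\in\sigma(s,p)$ with $\mu_n>\lambda_1$ and $\mu_n\to\lambda_1$; $L^p$-normalized eigenfunctions $v_n$ then satisfy $\|v_n\|^p=\mu_n$, and from $\langle A(v_n),v_n-v\rangle=\mu_n\int|v_n|^{p-2}v_n(v_n-v)\,dx\to 0$ together with the $({\bf S})$-property one gets $v_n\to v$ strongly in $X(\Omega)$ for some $\lambda_1$-eigenfunction $v$ of constant sign; but $(iii)$ says each $v_n$ changes sign and the measures of $\{v_n^\pm>0\}$ admit a uniform lower bound (an Anane-type argument, cf.\ \cite{FP}), contradicting the strong $L^p$-convergence.

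For $(iii)$, assume $u$ is a $\lambda$-eigenfunction with $\lambda>\lambda_1$ and $u\ge 0$; then $u>0$ a.e. by Proposition~\ref{mp}. With $e_1>0$ a positive $\lambda_1$-eigenfunction, testing the equation for $u$ against $e_1^p/u^{p-1}$ in the nonlocal Picone inequality of \cite{FP,LL} yields
\[\lambda\|e_1\|_p^p=\langle A(u),e_1^p/u^{p-1}\rangle\le\|e_1\|^p=\lambda_1\|e_1\|_p^p,\]
a contradiction. Item $(iv)$ is a direct application of Theorem~\ref{3main} to $f(x,t)=\lambda|t|^{p-2}t$, which satisfies ${\bf H}_2$ with $r=p<p^*_s$. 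For $(v)$, if $\lambda_n\to\lambda$ and $u_n$ are $\lambda_n$-eigenfunctions with $\|u_n\|_p=1$, then $\|u_n\|^p=\lambda_n$ is bounded, hence $u_n\weaklyto u$ in $X(\Omega)$ and $u_n\to u$ in $L^p(\Omega)$ with $\|u\|_p=1$; the identity $\langle A(u_n),u_n-u\rangle=\lambda_n\int|u_n|^{p-2}u_n(u_n-u)\,dx\to 0$ combined with the $({\bf S})$-property gives $u_n\to u$ in $X(\Omega)$, and passing to the limit in the weak formulation shows $\lambda\in\sigma(s,p)$.

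The main obstacle is the nonlocal Picone inequality underlying $(ii)$ and $(iii)$: its fractional derivation is the delicate technical point (and the reason for quoting \cite{FP,LL}); everything else is a routine blend of compactness, Lagrange multipliers, and the maximum principle.
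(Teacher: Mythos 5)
The paper offers no proof of Proposition \ref{spec}: it is explicitly presented as a recollection of known facts, with the reader sent to \cite{FP,IS,LL} (and, for the Picone-type inequality, to \cite{BF}). So there is no in-paper argument to compare against; what you have done is reconstruct the standard proofs from those references, and your reconstruction is correct in outline and consistent with how the cited works actually proceed. A few remarks on the comparison and on the points where your sketch leans hardest on the literature. First, for $(ii)$ the references establish proportionality either via the discrete Picone inequality or via ``hidden convexity'' of the Gagliardo seminorm along the curves $t\mapsto((1-t)u^p+tv^p)^{1/p}$; both routes work, but in either case the admissibility of $e_1^p/u^{p-1}$ (or of the convexity curve) as a test object requires a regularization such as replacing $u$ by $u+\eps$, since $u$ need not be bounded away from zero --- you correctly identify this as the one genuinely delicate step, and deferring it to \cite{FP,LL,BF} is exactly what the paper itself does. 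Second, your use of Theorem \ref{3main} for item $(iv)$ is a forward reference within the paper but involves no circularity (the proof of Theorem \ref{3main} does not use Proposition \ref{spec}), and it matches the paper's own remark that such bounds for the eigenvalue problem were previously obtained in \cite[Theorem 3.2]{FP}; note only that the relevant hypothesis is ${\bf H}_3$ with $q=r=p$, not ${\bf H}_2$ as you wrote. Finally, your isolation argument for $(i)$ correctly uses $(iii)$ together with the uniform lower bound on the measure of the nodal sets $\{v_n^\pm>0\}$ (obtained by testing with $v_n^\pm$, using the elementary inequality $|\xi^\pm-\eta^\pm|^p\le|\xi-\eta|^{p-2}(\xi-\eta)(\xi^\pm-\eta^\pm)$ and the Sobolev embedding), and the $({\bf S})$-property arguments for $(i)$ and $(v)$ are sound. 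In short: no genuine gap beyond what is legitimately outsourced to the cited references, which is the same outsourcing the paper performs.
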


\noindent
We define a non-decreasing sequence $(\lambda_k)$ of {\em variational eigenvalues} of $(-\Delta)^s_p$ by means of the cohomological index. This type of construction was introduced for the $p$-Laplacian by {\sc Perera} \cite{P} (see also {\sc Perera $\&$ Szulkin} \cite{PS}), and it is slightly different from the traditional one, based on the Krasnoselskii genus (which does not give the additional Morse-theoretical information that we need here).
\vskip2pt
\noindent
We briefly recall the definition of $\Z_2$-cohomological index by {\sc Fadell $\&$ Rabinowitz} \cite{FR}. For any closed, symmetric subset $M$ of a Banach space $X$, let $\overline M=M/\Z_2$ be the quotient space (in which $u$ and $-u$ are identified), and let $\phi:\overline M\to\R {\rm P}^\infty$ be the classifying map of $\overline M$, which induces a homomorphism $\phi^*:H^*(\R {\rm P}^\infty)\to H^*(\overline M)$ of the Alexander-Spanier cohomology rings with coefficients in $\Z_2$. We may identify $H^*(\R {\rm P}^\infty)$ with the polynomial ring $\Z_2[\omega]$. The {\em cohomological index} of $M$ is then
\[i(M) = \begin{cases}
\sup\{k\in\N\,:\,\phi^*(\omega^k)\ne 0\} & \mbox{if $M\ne\emptyset$}\\
0 & \mbox{if $M=\emptyset$}.
\end{cases}\]
Now let us come back to our case. We set for all $u\in X(\Omega)$
\[J(u)=\frac{\|u\|_p^p}{p}, \quad I(u)=\frac{\|u\|^p}{p}, \ \Psi(u)=\frac{1}{J(u)} \ (u\ne 0)\]
and define a $C^1$-Finsler manifold by setting
\begin{equation}\label{2manifold}
\mathcal{M}=\{u\in X(\Omega)\,:\, I(u)=1\}.
\end{equation}
For all $k\in\N$, we denote by $\mathcal{F}_k$ the family of all closed, symmetric subsets $M$ of $\mathcal{M}$ such that $i(M)\ge k$, and set
\begin{equation}\label{minimax}
\lambda_k=\inf_{M\in\mathcal{F}_k}\sup_{u\in M}\Psi(u)
\end{equation}
(note that, for $k=1$, \eqref{rayquot} and \eqref{minimax} agree). For all $k\in\N$, $\lambda_k$ turns out to be a critical value of the restricted functional $\restr{\Psi}{\mathcal{M}}$ (which is even and satisfies ${\bf PS}$ by \cite[Lemma 4.5]{PAO}), hence, by the Lagrange multiplier rule, an eigenvalue of $(-\Delta)^s_p$. These eigenvalues have the following remarkable properties (see \cite[Theorem 4.6]{PAO}):

\begin{prop}\label{2lambdak}
The sequence $(\lambda_k)$ defined by \eqref{minimax} is non-decreasing and $\lambda_k\to\infty$ as $k\to\infty$. Moreover, for all $k\in\N$ we have
\[i\big(\{u\in\mathcal{M}\,:\,\Psi(u)\le\lambda_k\}\big)=i\big(\{u\in\mathcal{M}\,:\,\Psi(u)<\lambda_{k+1}\}\big)=k\]
\end{prop}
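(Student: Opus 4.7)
The proof amounts to verifying that the abstract theory of variational eigenvalues via the $\Z_2$-cohomological index developed in \cite{PAO} applies to the functionals $I$, $J$ and the manifold $\mathcal{M}$, and then invoking \cite[Theorem 4.6]{PAO}. First, one checks that $I,J\in C^1(X(\Omega))$ are even, $p$-homogeneous and strictly positive away from the origin, so \eqref{2manifold} defines a $C^1$ Finsler manifold: indeed $I'(u)=A(u)$ and by $p$-homogeneity $\langle I'(u),u\rangle=\|u\|^p=p$ on $\mathcal{M}$, whence $I'(u)\ne 0$ throughout $\mathcal{M}$. The restriction $\restr{\Psi}{\mathcal{M}}$ is even, of class $C^1$, bounded below by $\lambda_1>0$, and satisfies ${\bf PS}$ by \cite[Lemma 4.5]{PAO}; the verification in the present nonlocal context rests on the compact embedding $X(\Omega)\hookrightarrow L^p(\Omega)$ together with the $({\bf S})$-property of $A$. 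A Lagrange multiplier computation then identifies the critical points of $\restr{\Psi}{\mathcal{M}}$ with eigenfunctions of \eqref{eigen}: writing $\Psi'(u)=-J'(u)/J(u)^2$, the equation $\Psi'(u)=\mu I'(u)$ tested against $u$ forces $A(u)=\lambda|u|^{p-2}u$ with $\lambda=\Psi(u)$.

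The monotonicity $\lambda_k\le\lambda_{k+1}$ is immediate from the inclusion $\mathcal{F}_{k+1}\subseteq\mathcal{F}_k$. For the divergence $\lambda_k\to\infty$, I would argue by contradiction: if $\lambda_k\le c$ for every $k\in\N$, then for each $k$ one produces $M_k\in\mathcal{F}_k$ with $\sup_{M_k}\Psi\le c+1$, so that $M_k$ lies in the set $B=\{u\in\mathcal{M}\,:\,\|u\|_p^p\ge p/(c+1)\}$, which is bounded in $X(\Omega)$ and bounded away from the origin in $L^p(\Omega)$. The map $u\mapsto u/\|u\|_p$ sends $B$ into the unit sphere of $L^p(\Omega)$, and by the compactness of the embedding $X(\Omega)\hookrightarrow L^p(\Omega)$ this image is relatively compact; the standard property of $i$ on precompact symmetric subsets of a Banach space then yields a uniform bound on $i(M_k)$, contradicting $i(M_k)\ge k$.

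Finally, the two cohomological identities are the content of \cite[Theorem 4.6]{PAO}. Granted ${\bf PS}$ and evenness of $\restr{\Psi}{\mathcal{M}}$, the deformation lemma on a $C^1$ Finsler manifold, applied at a regular value slightly above $\lambda_k$ and combined with the continuity and monotonicity of $i$, yields $i(\{\Psi\le\lambda_k\})\ge k$, while the definition of $\lambda_{k+1}$ forces $i(\{\Psi<\lambda_{k+1}\})\le k$; the reverse inequalities follow by a subadditivity and excision argument for the Alexander--Spanier cohomology with $\Z_2$-coefficients. The main obstacle, and the reason for adopting Perera's cohomological construction in place of the Krasnoselskii genus, is precisely this simultaneous identification of $i(\{\Psi\le\lambda_k\})$ and $i(\{\Psi<\lambda_{k+1}\})$ with the same integer $k$, which is the key ingredient later used for the Morse-theoretic computations of critical groups.
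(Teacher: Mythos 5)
Your proposal is correct and follows essentially the same route as the paper, which offers no proof here beyond having verified earlier that $A$, $I$ and $J$ satisfy the structural assumptions of \cite{PAO} and then citing \cite[Theorem 4.6]{PAO}. Your additional sketch of the ingredients behind that abstract theorem (the Lagrange multiplier identification, monotonicity of the classes $\mathcal{F}_k$, the compact-embedding argument forcing $\lambda_k\to\infty$, and the index identities via deformation and the definition of $\lambda_{k+1}$) is accurate in outline and merely fills in what the paper delegates to the reference.
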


\begin{rem}\label{alteigen}
In \cite{IS} a different construction of the variational eigenvalues is performed. Such construction is equivalent to that described above, up to a point: precisely, one can easily see that, following the method of \cite{IS}, we obtain exactly the same sequence $(\lambda_k)$, while it is not certain whether the topological property in Proposition \ref{2lambdak} holds, or not.
\end{rem}

\subsection{Critical groups}
\label{morse}

We recall the definition and some basic properties of critical groups, referring the reader to the monograph \cite{PAO} for a detailed account on the subject. Let $X$ be a Banach space, $\Phi\in C^1(X)$ be a functional satisfying ${\bf C}$, and denote by $K(\Phi)$ the set of all critical points of $\Phi$. Let $u\in X$ be an {\em isolated} critical points of $\Phi$, i.e., there exists a neighborhood $U$ of $u$ such that $K(\Phi)\cap U=\{u\}$, and $\Phi(u)=c$. For all $k\in\N_0$, the $k$-{\em th (cohomological) critical group} of $\Phi$ at $u$ is defined as
\[C^k(\Phi,u)=H^k(\Phi^c\cap U,\Phi^c\cap U\setminus\{u\}),\]
where $H^*(M,N)$ denotes again the Alexander-Spaniel cohomology with coefficients in $\Z_2$ for a topological pair $(M,N)$.
\vskip2pt
\noindent
The definition above is well posed, since cohomology groups are invariant under excision, so $C^k(\Phi,u)$ does not depend on $U$. Moreover, critical groups are invariant under homotopies preserving isolatedness of critical points (see {\sc Chang $\&$ Ghoussoub} \cite{CG}, {\sc Corvellec $\&$ Hantoute} \cite{CH}).

\begin{prop}\label{2hominv}
Let $X$ be a Banach space, $u\in X$, and for all $\tau\in[0,1]$ let $\Phi_\tau\in C^1(X)$ be a functional such that $u\in K(\Phi_\tau)$. If there exists a closed neighborhood $U\subset X$ of $u$ such that
\begin{itemize}
\item[$(i)$] $\Phi_\tau$ satisfies ${\bf PS}$ in $U$ for all $\tau\in[0,1]$;
\item[$(ii)$] $K(\Phi_\tau)\cap U=\{u\}$ for all $\tau\in[0,1]$;
\item[$(iii)$] the mapping $\tau\mapsto\Phi_\tau$ is continuous between $[0,1]$ and $C^1(U)$,
\end{itemize}
then for all $k\in\N_0$ we have $C^k(\Phi_1,u)=C^k(\Phi_0,u)$.
\end{prop}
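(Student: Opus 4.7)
The plan is to prove the statement by a connectedness argument on the parameter $\tau$. Set $c_\tau=\Phi_\tau(u)$; by assumption (iii) the map $\tau\mapsto c_\tau$ is continuous. Define
$$T=\set{\tau\in[0,1] \,:\, C^k(\Phi_\tau,u)\cong C^k(\Phi_0,u) \text{ for all } k\in\N_0}.$$
Then $0\in T$, and it suffices to show that $T$ is both open and closed in $[0,1]$, equivalently that for each $\tau_0\in[0,1]$ there exists $\eta>0$ such that $C^k(\Phi_\tau,u)\cong C^k(\Phi_{\tau_0},u)$ whenever $|\tau-\tau_0|\le\eta$. By excision (critical groups are independent of the chosen neighborhood), I may shrink $U$ to a closed ball $\overline B_R(u)$ on which $\Phi_{\tau_0}$ is bounded.

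\textbf{Uniform gradient lower bound.} The key quantitative step is the following claim: there exist $r\in(0,R)$, $\delta>0$ and $\eta_0>0$ such that
$$\|\Phi'_\tau(v)\|_*\ge\delta \quad \text{for all } v\in U\setminus B_r(u) \text{ and all } \tau \text{ with } |\tau-\tau_0|\le\eta_0.$$
I would argue by contradiction: failure produces sequences $v_n\in U\setminus B_r(u)$ and $\tau_n\to\tau_0$ with $\Phi'_{\tau_n}(v_n)\to 0$. By (iii), $\|\Phi'_{\tau_n}-\Phi'_{\tau_0}\|_{C(U,X^*)}\to 0$, hence $\Phi'_{\tau_0}(v_n)\to 0$, while $(\Phi_{\tau_0}(v_n))$ is bounded. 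Then (i) provides a subsequence convergent to some $v^*\in K(\Phi_{\tau_0})\cap U$ with $\|v^*-u\|\ge r$, contradicting (ii).

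\textbf{Deformation and Gromoll–Meyer pair.} Using the uniform bound of the previous step, I would construct a pseudo-gradient vector field $W_\tau$ for $\Phi_\tau$ on $U\setminus B_{r/2}(u)$, depending continuously on $\tau$ through a partition-of-unity construction whose well-definedness is guaranteed by (iii). The associated negative flows retract, in uniformly bounded time, the level set $\Phi_\tau^{c_\tau}\cap U$ onto a small neighborhood of $u$ without crossing any other critical point. One then verifies that a fixed Gromoll–Meyer pair $(W,W_-)$ for $\Phi_{\tau_0}$ at $u$ remains a Gromoll–Meyer pair for every $\Phi_\tau$ with $|\tau-\tau_0|\le\eta_0$, after possibly shrinking $\eta_0$. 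Since the critical group is canonically identified with the relative cohomology $H^k(W,W_-)$ (independent of which functional in the family is used), this yields $C^k(\Phi_\tau,u)\cong C^k(\Phi_{\tau_0},u)$ and closes the argument.

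\textbf{Expected main obstacle.} The delicate point is the deformation step: producing a $\tau$-uniform pseudo-gradient flow and verifying that one fixed pair $(W,W_-)$ captures the local topology of $\Phi_\tau$ simultaneously for all $\tau$ in a neighborhood of $\tau_0$, even though the level $c_\tau$ itself moves with $\tau$. This is precisely the content of the stability theory developed in Chang–Ghoussoub \cite{CG} and Corvellec–Hantoute \cite{CH}, whose framework I would invoke at this last step.
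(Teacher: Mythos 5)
The paper does not actually prove Proposition \ref{2hominv}: it is stated as a known stability result and attributed wholesale to Chang--Ghoussoub \cite{CG} and Corvellec--Hantoute \cite{CH}, so there is no in-paper argument to compare against line by line. Your sketch reconstructs the standard proof strategy behind those references, and the part you carry out in full is correct: the reduction by connectedness of $[0,1]$ to local constancy of $\tau\mapsto C^k(\Phi_\tau,u)$ is sound, and the uniform lower bound $\|\Phi'_\tau(v)\|_*\ge\delta$ on $U\setminus B_r(u)$ for $\tau$ near $\tau_0$ follows exactly as you say from ${\bf PS}$ for $\Phi_{\tau_0}$, hypothesis $(ii)$, and the $C^1(U)$-continuity in $(iii)$ (with the small but necessary observation, which you make, that $U$ can be shrunk so that $\Phi_{\tau_0}$ — hence, by $(iii)$, every nearby $\Phi_\tau$ — is bounded there, so that the ${\bf PS}$ hypothesis on bounded energies applies). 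The remaining step, the $\tau$-uniform pseudo-gradient deformation and the stability of a fixed Gromoll--Meyer pair $(W,W_-)$ under $C^1$-small perturbations, is genuinely the technical heart of the result; your description of the flow as ``retracting $\Phi_\tau^{c_\tau}\cap U$ onto a small neighborhood of $u$'' is imprecise (that sublevel set generally carries nontrivial topology relative to $u$ — this is precisely what the critical groups measure — and what the flow really yields is that one fixed pair computes $C^k(\Phi_\tau,u)$ for all nearby $\tau$), but you correctly identify this as the content of \cite{CG,CH} and defer to them, which is exactly what the paper itself does for the entire proposition. In short: your proposal is consistent with, and more detailed than, the paper's treatment, with the only unproved step being the one the paper also leaves to the cited literature.
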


\noindent
We recall some special cases in which the computation of critical groups is immediate ($\delta_{k,h}$ is the Kronecker symbol):

\begin{prop}\label{2critgr}
Let $X$ be a Banach space with ${\rm dim}(X)=\infty$, $\Phi\in C^1(X)$ be a functional satisfying ${\bf C}$, $u\in K(\Phi)$ be an isolated critical point of $\Phi$. The following hold:
\begin{itemize}
\item[$(i)$] if $u$ is a local minimizer of $\Phi$, then $C^k(\Phi,u)=\delta_{k,0}\,\Z_2$ for all $k\in\N_0$;
\item[$(ii)$] if $u$ is a local maximizer of $\Phi$, then $C^k(\Phi,u)=0$ for all $k\in\N_0$.
\end{itemize}
\end{prop}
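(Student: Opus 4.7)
The plan is to exploit, in each case, the fact that isolation of $u$ lets us shrink the neighborhood $U$ in the definition of $C^k(\Phi,u)$ until the pair $(\Phi^c\cap U,\Phi^c\cap U\setminus\{u\})$ becomes topologically trivial. Since $C^k(\Phi,u)$ is independent of the choice of isolating neighborhood, it suffices to make one convenient choice, which I will take to be a small closed ball $U=\overline B_\rho(u)$ on which no other critical points occur.

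For part $(i)$, I would use that $u$ is a strict local minimizer in the sense that $\Phi(v)\ge\Phi(u)=c$ for every $v$ in some neighborhood of $u$. In fact, since $u$ is isolated in $K(\Phi)$, I can choose $\rho>0$ so small that $K(\Phi)\cap\overline B_\rho(u)=\{u\}$ and $\Phi\ge c$ on $\overline B_\rho(u)$; moreover I claim that, after possibly shrinking $\rho$, one has $\Phi(v)>c$ for every $v\in\overline B_\rho(u)\setminus\{u\}$. Indeed, if not, one would produce a sequence $v_n\to u$ with $\Phi(v_n)=c$, and by a standard deformation/pseudo-gradient argument applied near the connected component $\{u\}$ of $K(\Phi)\cap\Phi^{-1}(c)$ (using condition ${\bf C}$) this would contradict isolation of $u$. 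Hence $\Phi^c\cap U=\{u\}$ and $\Phi^c\cap U\setminus\{u\}=\emptyset$, so
\[
C^k(\Phi,u)=H^k(\{u\},\emptyset)=\delta_{k,0}\,\Z_2
\qquad(k\in\N_0).
\]

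For part $(ii)$, a symmetric argument gives $\rho>0$ with $K(\Phi)\cap\overline B_\rho(u)=\{u\}$ and $\Phi(v)\le c$ for all $v\in\overline B_\rho(u)$; setting $U=\overline B_\rho(u)$ one has $\Phi^c\cap U=U$ and $\Phi^c\cap U\setminus\{u\}=U\setminus\{u\}$, so
\[
C^k(\Phi,u)=H^k(U,U\setminus\{u\}).
\]
Now $U$ is convex, hence contractible. The crucial point is that, since $\dim(X)=\infty$, the unit sphere of $X$ is contractible (this is the classical theorem of Bessaga/Klee on non-compactness of the unit sphere in infinite-dimensional Banach spaces), and $U\setminus\{u\}$ deformation retracts radially onto $\partial B_\rho(u)$, which is homeomorphic to this sphere. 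Therefore both $U$ and $U\setminus\{u\}$ are contractible, and the long exact Alexander--Spanier cohomology sequence of the pair $(U,U\setminus\{u\})$ yields $H^k(U,U\setminus\{u\})=0$ for every $k\in\N_0$, proving $(ii)$.

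The only nontrivial ingredient is the infinite-dimensional contractibility of the sphere used in $(ii)$; everything else is a direct unwinding of the definitions together with the homotopy invariance and excision built into Alexander--Spanier cohomology. The residual care in $(i)$ is the promotion of the non-strict inequality $\Phi\ge c$ to the strict one on a punctured ball, which is where the Cerami condition ${\bf C}$ and the isolation of $u$ are needed; in the finite-dimensional case one can avoid this step, but in infinite dimensions a deformation argument near the (possibly degenerate) critical value $c$ seems unavoidable.
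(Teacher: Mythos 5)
Your argument is correct and is essentially the standard one; the paper does not actually prove Proposition \ref{2critgr} but recalls it from \cite{PAO}, where the proof follows the same route you take (shrink to a small closed ball, identify the pair $(\Phi^c\cap U,\Phi^c\cap U\setminus\{u\})$ explicitly, and in case $(ii)$ invoke the contractibility of the unit sphere of an infinite-dimensional Banach space). One correction to part $(i)$: upgrading $\Phi\ge c$ to $\Phi>c$ on the punctured ball requires neither a pseudo-gradient deformation nor condition ${\bf C}$ --- if $v$ lies in the \emph{open} ball $B_\rho(u)$ and $\Phi(v)=c=\min_{\overline B_\rho(u)}\Phi$, then $v$ is itself a local minimizer of $\Phi$, hence a critical point, so $v=u$ by isolation (and one then passes to the smaller neighborhood $\overline B_{\rho/2}(u)$). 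So, contrary to your closing remark, the Cerami condition plays no role in $(i)$ either; it enters this circle of ideas only when deformations of sublevel sets are genuinely needed, as in the proof of Theorem \ref{4main}.
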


\noindent
If the set of critical values of $\Phi$ is bounded below, we define for all $k\in\N_0$ the $k$-{\em th critical group at infinity} of $\Phi$ as
\[C^k(\Phi,\infty)=H^k(X,\Phi^\eta),\]
where $\eta<\inf_{u\in K(\Phi)}\Phi(u)$. We recall the {\em Morse identity}:

\begin{prop}\label{morseid}
Let $X$ be a Banach space, $\Phi\in C^1(X)$ be a functional satisfying ${\bf C}$, such that $K(\Phi)$ is a finite set. Then, there exists a formal power series $Q(t)=\sum_{k=0}^\infty q_k t^k$ ($q_k\in\N_0$ for all $k\in\N_0$) such that for all $t\in\R$
\[\sum_{k=0}^\infty\sum_{u\in K(\Phi)}{\rm rank}\,C^k(\Phi,u)t^k=\sum_{k=0}^\infty{\rm rank}\,C^k(\Phi,\infty)t^k+(1+t)Q(t).\]
\end{prop}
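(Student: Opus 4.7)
The plan is to filter $X$ by sublevel sets of $\Phi$ separated by regular values, identify the relative cohomology across each strip with local critical groups, and assemble the whole via iterated use of the long exact sequence of a triple. The final identity will then follow from the standard algebraic fact that ranks are almost additive along exact sequences, the defect being of the form $(1+t)Q(t)$.

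\emph{Filtration and deformation.} Since $K(\Phi)$ is finite, I would enumerate the distinct critical values as $c_1<c_2<\cdots<c_m$ and choose regular levels $a_0=\eta<c_1<a_1<\cdots<c_m<a_m$, with $a_m>\Phi(u)$ for every $u\in K(\Phi)$. Condition ${\bf C}$, combined with a Cerami-type pseudo-gradient construction, yields two deformation statements: first, if $[a,b]$ contains no critical values then $\Phi^b$ deformation retracts onto $\Phi^a$; second, the whole space $X$ deformation retracts onto $\Phi^{a_m}$. Consequently $H^k(X,\Phi^\eta)\cong H^k(\Phi^{a_m},\Phi^{a_0})$ for all $k\in\N_0$.

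\emph{Critical strips.} At each level $c_i$ I would choose pairwise disjoint closed neighborhoods $U_j^{(i)}$ of the critical points $u_j^{(i)}\in K(\Phi)\cap\Phi^{-1}(c_i)$, $j=1,\dots,r_i$. Deforming $\Phi^{a_i}$ down to $\Phi^{a_{i-1}}$ outside $\bigcup_j U_j^{(i)}$ and then applying excision gives
\[H^k(\Phi^{a_i},\Phi^{a_{i-1}})\cong\bigoplus_{j=1}^{r_i}C^k(\Phi,u_j^{(i)}).\]
Writing $P(M,N;t)=\sum_{k\ge 0}{\rm rank}\,H^k(M,N)\,t^k$, summing over $i$ then gives
\[\sum_{i=1}^m P(\Phi^{a_i},\Phi^{a_{i-1}};t)=\sum_{u\in K(\Phi)}\sum_{k\ge 0}{\rm rank}\,C^k(\Phi,u)\,t^k.\]

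\emph{Assembly.} For each $i$ the triple $(\Phi^{a_i},\Phi^{a_{i-1}},\Phi^{a_0})$ produces a long exact sequence in $\Z_2$ Alexander--Spanier cohomology; splitting it into short exact sequences of image and kernel subspaces and equating alternating sums of dimensions term by term yields
\[P(\Phi^{a_i},\Phi^{a_0};t)+(1+t)Q_i(t)=P(\Phi^{a_i},\Phi^{a_{i-1}};t)+P(\Phi^{a_{i-1}},\Phi^{a_0};t)\]
for some polynomial $Q_i$ with coefficients in $\N_0$. Iterating over $i=1,\dots,m$ gives a telescoping identity whose left-hand side reduces to $P(\Phi^{a_m},\Phi^{a_0};t)=P(X,\Phi^\eta;t)$, and combining with the previous display produces the claimed Morse identity with $Q(t)=\sum_{i=1}^m Q_i(t)$.

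\emph{Main obstacle.} The heart of the argument is the Cerami-based deformation lemma needed in the first step. Unlike ${\bf PS}$, condition ${\bf C}$ only forbids Palais--Smale sequences to escape too fast to infinity, so the pseudo-gradient vector field must be rescaled by the factor $(1+\|\cdot\|)$ in order to guarantee a quantitative drop of $\Phi$ along its flow and a uniform time of existence; furnishing such a flow that simultaneously preserves the finite set $K(\Phi)$ pointwise is the only technically nontrivial ingredient. Once this deformation is in hand, the remainder is a formal homological computation along the filtration $\{\Phi^{a_i}\}$.
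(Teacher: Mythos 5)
Your argument is correct: the paper does not prove this proposition at all (it is recalled from the monograph of Perera, Agarwal and O'Regan \cite{PAO}), and your filtration-by-regular-sublevels scheme --- Cerami deformation, excision over the critical strips to recover $\bigoplus_j C^k(\Phi,u_j^{(i)})$, and the $(1+t)$-subadditivity of Poincar\'e series along the long exact sequence of a triple, telescoped to $H^k(X,\Phi^\eta)=C^k(\Phi,\infty)$ --- is precisely the standard proof given there. You also correctly isolate the only genuinely delicate point, namely that the deformation lemmas must be established under ${\bf C}$ rather than ${\bf PS}$ via the rescaled pseudo-gradient flow.
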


\noindent
In the absence of a direct sum decomposition, one of the main technical tools that we use to compute the critical groups of $\Phi$ at zero is the notion of a {\em cohomological local splitting} introduced in \cite{PAO}, which is a variant of the homological local linking of {\sc Perera} \cite{P1}. The following slightly different form of this notion was given in {\sc Degiovanni, Lancelotti $\&$ Perera} \cite{DLP}.

\begin{definition}\label{2clsplit}
A functional $\Phi\in C^1(X)$ has a cohomological local splitting near $0$ in dimension $k\in\N$, if there exist symmetric cones $X_\pm \subset X$ with $X_+ \cap X_- =\{0\}$ and $\rho > 0$ such that
\begin{itemize}
\item[$(i)$] $i(X_- \setminus \set{0}) = i(X \setminus X_+) = k$;
\item[$(ii)$] $\Phi(u) \le \Phi(0)$ for all $u\in\overline B_\rho(0)\cap X_-$, and $\Phi(u)\ge\Phi(0)$ for all $u\in\overline B_\rho(0) \cap X_+$.
\end{itemize}
\end{definition}

\noindent
In this case, we have the following result (see \cite[Proposition 2.1]{DLP}):

\begin{prop}\label{2degio}
If $X$ is a Banach space and $\Phi \in C^1(X)$ has a cohomological local splitting near $0$ in dimension $k\in\N$, and $0$ is an isolated critical point of $\Phi$, then $C^k(\Phi,0) \ne 0$.
\end{prop}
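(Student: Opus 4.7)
The plan is to detect $C^k(\Phi,0)$ by comparing with a test pair carved out by the negative cone $X_-$, using the cohomological index to certify that the comparison produces a nonzero class.

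\medskip

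\noindent\textbf{Setup.} Replacing $\Phi$ by $\Phi-\Phi(0)$, assume $\Phi(0)=0$. Since $0$ is isolated in $K(\Phi)$, shrink the $\rho$ of Definition~\ref{2clsplit} so that $K(\Phi)\cap\overline B_\rho(0)=\{0\}$ while keeping the splitting inequalities intact; set $U=\overline B_\rho(0)$. By definition,
\[
C^k(\Phi,0)\;=\;H^k\big(\Phi^0\cap U,\; (\Phi^0\cap U)\setminus\{0\}\big).
\]

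\medskip

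\noindent\textbf{A nontrivial class from the $X_-$ cone.} Let $W:=X_-\cap U$ and $W_0:=W\setminus\{0\}$. Definition~\ref{2clsplit}(ii) gives the inclusion of pairs $(W,W_0)\subset(\Phi^0\cap U,(\Phi^0\cap U)\setminus\{0\})$. Radial scaling $u\mapsto\rho u/\|u\|$ is an odd homotopy equivalence between $W_0$ and $X_-\setminus\{0\}$, so by Definition~\ref{2clsplit}(i) and the homotopy invariance of $i$, $i(W_0)=k$. Since $W$ is a contractible cone, the long exact sequence of $(W,W_0)$ identifies $H^k(W,W_0)\cong\tilde H^{k-1}(W_0)$, and the index calculus of Fadell--Rabinowitz (as packaged in \cite[Ch.~4]{PAO}) provides a nontrivial class $\alpha\in H^k(W,W_0)$ pulled back from $\omega^{k-1}$ through the classifying map of $W_0/\Z_2$.

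\medskip

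\noindent\textbf{Lifting $\alpha$ to $C^k(\Phi,0)$.} The inclusion $\iota:(W,W_0)\hookrightarrow(\Phi^0\cap U,(\Phi^0\cap U)\setminus\{0\})$ induces $\iota^*:C^k(\Phi,0)\to H^k(W,W_0)$; the goal is to show that $\alpha$ lies in the image of $\iota^*$. For this I would use the second bound $i(U\setminus X_+)=k$ of Definition~\ref{2clsplit}(i) together with a pseudo-gradient descent for $\Phi$ available on the critical-point-free annulus $U\setminus\{0\}$, to construct an odd continuous deformation of $\Phi^0\cap U$ that pushes $(\Phi^0\cap U)\setminus W$ onto $W_0$ without meeting $X_+$ (where $\Phi\ge 0$ would obstruct $\Phi$-descent). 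Naturality of the index under this deformation exhibits $\alpha$ as the image of a nontrivial class in $C^k(\Phi,0)$, forcing $C^k(\Phi,0)\ne 0$.

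\medskip

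\noindent\textbf{Expected main obstacle.} The delicate step is the odd deformation in Step~3: $\Phi$ is not assumed even, so odd pseudo-gradient fields are not immediately available, and the cones $X_\pm$ are not linear subspaces, so there is no projection $U\to X_-$ analogous to the direct-sum case. The standard remedy, which I would adopt, is to invoke the homotopy invariance of critical groups in Proposition~\ref{2hominv}: deform $\Phi$ along a path $\Phi_\tau$ to an even model functional exhibiting the same splitting structure, arranged so that $0$ stays uniformly isolated in $K(\Phi_\tau)$. The critical group is transported through the homotopy, reducing the computation to the classical direct-sum local linking setting of Perera~\cite{P1}, where the projection of $U$ onto $X_-$ along $X_+$ provides an explicit right-inverse to $\iota^*$.
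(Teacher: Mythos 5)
The paper does not actually prove Proposition \ref{2degio}; it quotes it from Degiovanni--Lancelotti--Perera \cite[Proposition 2.1]{DLP}, whose argument is purely topological: one compares the pair $(\Phi^0\cap\overline B_\rho(0),\,\Phi^0\cap\overline B_\rho(0)\setminus\{0\})$ with the cone pair $(X_-\cap\overline B_\rho(0),\,X_-\cap\partial B_\rho(0))$ and with the complement of $X_+$, and the two index equalities in Definition \ref{2clsplit}$(i)$ are used, via naturality of the Fadell--Rabinowitz index and the exact cohomology sequences of these pairs, to force a nonzero class in degree $k$. No flow, no evenness of $\Phi$, and no deformation of the functional are involved. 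Your Steps 1--2 are compatible with that scheme, although the passage from $i(W_0)=k$ to $\tilde H^{k-1}(W_0;\Z_2)\neq 0$ is not automatic: it needs the Gysin sequence of the double cover $W_0\to W_0/\Z_2$ and uses that the index equals $k$ exactly, not merely $\geq k$.

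The genuine gap is Step 3 together with its proposed remedy. First, $\Phi$ is not assumed even, so $\Phi^0\cap U$ is not a symmetric set and an ``odd continuous deformation'' of it is not even meaningful; a pseudo-gradient descent for $\Phi$ bears no relation to the cone $X_-$ (the splitting only constrains the sign of $\Phi$ on the cones, not the direction of descent), and the inequality $\Phi\geq 0$ on $\overline B_\rho(0)\cap X_+$ does not obstruct a descending flow from entering $X_+$ --- it only says that $\{\Phi<0\}\cap\overline B_\rho(0)$ avoids $X_+$. So the claimed retraction of $(\Phi^0\cap U)\setminus\{0\}$ onto $W_0$ is unavailable. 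Second, the remedy is circular: no even ``model functional'' with the same splitting is constructed, keeping $0$ uniformly isolated along the homotopy $\Phi_\tau$ is precisely the hard part (compare the effort needed in Lemma \ref{4hom} for a much more concrete homotopy), and even after such a reduction the sets $X_\pm$ would still be cones rather than complementary subspaces, so the ``projection of $U$ onto $X_-$ along $X_+$'' that you invoke as a right-inverse to $\iota^*$ does not exist. The whole point of Definition \ref{2clsplit} --- as the paper says, ``in the absence of a direct sum decomposition'' --- is that Perera's homological local linking setting \cite{P1} is not available; your argument ultimately reduces to it and therefore does not prove the proposition. What replaces the missing projection in the correct proof is exactly the second equality $i(X\setminus X_+)=k$, which you state but never actually put to use.
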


\section{$L^\infty$-Bounds on the weak solutions}
\label{inftybb}

\noindent
In this section we will prove some {\em a priori} $L^\infty$-bounds on the weak solutions of problem \eqref{prob}. Similar bounds were obtained  before in some special cases, namely for a semilinear fractional Laplacian equation with reaction term independent of $u$ (see \cite[Proposition 7]{SV}), and for the eigenvalue problem \eqref{eigen} (see \cite[Theorem 3.2]{FP}). Our hypothesis on the reaction term is the following:

\begin{itemize}
\item[${\bf H}_3$] $f:\Omega\times\R\to\R$ is a Carath\'eodory mapping satisfying a.e. in $\Omega$ and for all $t\in\R$
\[|f(x,t)|\le a(|t|^{q-1}+|t|^{r-1}),\]
for some $a>0$, $1\le q\le r<p^*_s$.
\end{itemize}

\noindent
The main result of the section is the following:

\begin{thm}\label{3main}
If ${\bf H}_3$ holds with $q\le p\le r$ satisfying
\[1+\frac{q}{p}>\frac{r}{p}+\frac{r}{p^*_s},\]
then there exist $K>0$ and $\alpha>1$, only depending on $s$, $p$, $\Omega$, $a$, $q$, and $r$, such that, for every weak solution $u\in X(\Omega)$ of \eqref{prob}, we have $u\in L^\infty(\Omega)$ and
\[\|u\|_\infty\le K(1+\|u\|_r^\alpha).\]
\end{thm}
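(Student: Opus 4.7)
The plan is to establish Theorem~\ref{3main} by a Moser-type iteration adapted to the nonlocal $(s,p)$-Laplacian. It suffices to bound $u^+$ in $L^\infty(\Omega)$ (the argument for $u^-$ being symmetric, using the oddness of $A$). Fix a truncation level $M>0$ and set $u_M = \min(u^+, M) \in X(\Omega) \cap L^\infty(\Omega)$. For each parameter $\beta \ge 1$ to be iterated, I would test the weak formulation \eqref{weak} against
\[
v = u\, u_M^{p(\beta - 1)} \in X(\Omega),
\]
whose admissibility follows from the boundedness of $u_M$ and a standard truncation argument on the Gagliardo seminorm.

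The key lower bound on $\langle A(u),v\rangle$ uses a pointwise convexity inequality of the form
\[
|a-b|^{p-2}(a-b)\bigl(a\,\phi(a) - b\,\phi(b)\bigr) \;\ge\; \frac{C}{\beta^{p-1}}\,\bigl|a\,\psi(a) - b\,\psi(b)\bigr|^p,
\]
valid for $\phi(t) = \min(t^+,M)^{p(\beta-1)}$ and $\psi(t) = \min(t^+,M)^{\beta-1}$, which is the nonlocal counterpart of the classical chain rule. Combined with the Gagliardo--Sobolev embedding $X(\Omega) \hookrightarrow L^{p^*_s}(\Omega)$ and the pointwise bound $u_M^\beta \le u\, u_M^{\beta-1}$, together with the growth assumption ${\bf H}_3$ applied to $f(x,u)\,v$, this yields the master inequality
\[
\|u_M^\beta\|_{p^*_s}^p \;\le\; C\, \beta^{p-1}\int_\Omega \bigl(|u|^q + |u|^r\bigr)\, u_M^{p(\beta - 1)+1}\, dx.
\]

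The right-hand side is then controlled by H\"older's inequality, splitting the critical factor $|u|^r\, u_M^{p(\beta-1)+1}$ as $|u|^{r-p}\cdot(u\, u_M^{\beta-1})^p$ and applying H\"older with the conjugate pair $(p^*_s/(p^*_s-p),\, p^*_s/p)$. This produces a bound $C\,\|u\|_m^{r-p}\,\|u\, u_M^{\beta-1}\|_{p^*_s}^p$ with $m = (r-p)p^*_s/(p^*_s-p)<r$, together with a companion term coming from the $|u|^q$ contribution handled analogously. The arithmetic condition $1 + q/p > r/p + r/p^*_s$ enters precisely to keep the H\"older exponents admissible while leaving a strictly positive gap in the $\|u\, u_M^{\beta-1}\|_{p^*_s}$-exponent; after passing to $M\to\infty$ by monotone convergence and iterating with the scheme $R_{n+1} = (p^*_s/p)(R_n + p - r)$ starting from $R_1 = p^*_s$ (which is guaranteed by Sobolev applied with the base test $v=u$), the recursion
\[
\|u^+\|_{R_{n+1}} \;\le\; \bigl(C\,\beta_n^{p-1}\bigr)^{1/(\beta_n p)}\bigl(1 + \|u\|_r^{\gamma_n}\bigr)\bigl(1 + \|u^+\|_{R_n}^{\lambda_n}\bigr)
\]
closes. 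Since $R_{n+1}/R_n \to p^*_s/p > 1$, the series $\sum 1/R_n$ converges, and hence both the product $\prod \lambda_n$ and the product of the multiplicative constants telescope to finite limits.

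The main technical obstacle will be the careful bookkeeping in the H\"older step: isolating the pointwise convexity inequality with the sharp $\beta^{p-1}$-dependence, and arranging the H\"older splittings so that the conjugate exponents are tied exactly to the hypothesis $1 + q/p > r/p + r/p^*_s$ while leaving enough room for the $\|u\, u_M^{\beta-1}\|_{p^*_s}$ factor either to be reabsorbed or to feed cleanly into the next iteration. Once this is in place, passing to the limit $n\to\infty$ produces $\|u^+\|_\infty \le K(1 + \|u\|_r^{\alpha})$ for some $\alpha>1$ depending only on the listed data, and the symmetric bound for $u^-$ completes the proof.
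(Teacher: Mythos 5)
Your overall strategy is a Moser-type iteration, which is a genuinely different route from the paper's argument: the paper runs a De Giorgi level-set iteration on the normalized function $v=(\rho\|u\|_r)^{-1}u$, with truncations $v_n=(v-1+2^{-n})^+$ and the decay of $R_n=\|v_n\|_r^r$, and the hypothesis $1+q/p>r/p+r/p^*_s$ enters to make the exponent $\gamma=r\beta+r-r^2/p$ positive so that the normalization parameter $\rho$ can absorb the factor $(\rho\|u\|_r)^{r^2/p-r}$ in the recursion. Your ingredients (the test function $u\,u_M^{p(\beta-1)}$, the discrete chain-rule inequality with constant $C\beta^{-(p-1)}$, the admissibility of $v$, the geometric growth of the exponents) are all individually sound and appear in the literature on $L^\infty$-bounds for fractional $p$-eigenfunctions.

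However, there is a concrete gap at the step where the iteration is supposed to close. After the H\"older splitting $|u|^r u_M^{p(\beta-1)}=|u|^{r-p}\bigl(|u|\,u_M^{\beta-1}\bigr)^p$ with exponents $\bigl(p^*_s/(p^*_s-p),\,p^*_s/p\bigr)$, the right-hand side becomes $C\,\|u\|_m^{r-p}\,\|u\,u_M^{\beta-1}\|_{p^*_s}^{p}$ --- the \emph{same} quantity, with the \emph{same} exponent $p$, that the convexity inequality plus Sobolev produce on the left-hand side. There is no ``strictly positive gap in the $\|u\,u_M^{\beta-1}\|_{p^*_s}$-exponent'': absorption is only possible if $C\|u\|_m^{r-p}<1$, which is not guaranteed, and the arithmetic condition $1+q/p>r/p+r/p^*_s$ does nothing to create such a gap. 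To repair this you must either (a) drop the splitting altogether and run the pure bootstrap: bound the right-hand side by $C\beta^{p-1}\int|u|^{R_n}$ with $R_n=r+p(\beta-1)$, obtaining $\|u^+\|_{R_{n+1}}\le (C\beta_n^{p-1})^{1/(p\beta_n)}\bigl(1+\|u^+\|_{R_n}\bigr)^{R_n/(p\beta_n)}$ and telescoping (note that in this version the hypothesis $1+q/p>r/p+r/p^*_s$ is never used, which is a warning sign that you have not actually located the mechanism by which it enters); or (b) keep the splitting but perform a Brezis--Kato truncation of the coefficient, writing $|u|^{r-p}=|u|^{r-p}\chi_{\{|u|\le T\}}+|u|^{r-p}\chi_{\{|u|>T\}}$ and using $\int_{\{|u|>T\}}|u|^m\le T^{m-r}\|u\|_r^r$ (valid since $m<r$) to choose $T=T(\|u\|_r)$ making the second piece absorbable. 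Either fix requires additional bookkeeping to recover the uniform polynomial dependence $\|u\|_\infty\le K(1+\|u\|_r^\alpha)$ with $K,\alpha$ depending only on the data, starting from the base estimate $\|u\|_{p^*_s}\le C(1+\|u\|_r^{r/p})$ obtained by testing with $v=u$. As written, the proof does not close.
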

\begin{proof}
Fix a weak solution $u\in X(\Omega)$ of \eqref{prob} with $u^+\ne 0$. We choose $\rho\ge\max\{1,\|u\|_r^{-1}\}$, set $v=(\rho\|u\|_r)^{-1}u$, so $v\in X(\Omega)$, $\|v\|_r=\rho^{-1}$, and $v$ is a weak solution of the auxiliary problem
\begin{equation}\label{bound1}
\left\{  \begin{array}{ll}
    (- \Delta)_p^s\, v=(\rho\|u\|_r)^{1-p}f(x,\rho\|u\|_rv) &\mbox{in $\Omega$} \\
    v=0 &\mbox{on $\R^N\setminus\Omega$}.
        \end{array}
      \right.
\end{equation}
For all $n\in\N$ we set $v_n=(v-1+2^{-n})^+$, so $v_n\in X(\Omega)$, $v_0=v^+$, and for all $n\in\N$ we have $0\le v_{n+1}(x)\le v_n(x)$ and $v_n(x)\to (v(x)-1)^+$ a.e. in $\Omega$ as $n\to\infty$. Moreover, the following inclusion holds (up to a Lebesgue null set):
\begin{equation}\label{bound2}
\{v_{n+1}>0\}\subseteq\{0<v<(2^{n+1}-1)v_n\}\cap\{v_n>2^{-n-1}\}.
\end{equation}
For all $n\in\N$ we set $R_n=\|v_n\|_r^r$, so $R_0=\|v^+\|_r^r\le\rho^{-r}$, and $(R_n)$ is a nonincreasing sequence in $[0,1]$. We shall prove that $R_n\to 0$ as $n\to\infty$. By H\"older inequality, the fractional Sobolev inequality (see \cite[Theorem 6.5]{DPV}), \eqref{bound2}, and Chebyshev inequality we have for all $n\in\N$
\begin{align*}
R_{n+1} &\le |\{v_{n+1}>0\}|^{1-\frac{r}{p^*_s}}\|v_{n+1}\|_{p^*_s}^r \\
&\le C|\{v_n^r>2^{-r(n+1)}\}|^{1-\frac{r}{p^*_s}}\|v_{n+1}\|^r \\
&\le C\, 2^{\big(r-\frac{r^2}{p^*_s}\big)(n+1)} R_n^{1-\frac{r}{p^*_s}}\|v_{n+1}\|^r.
\end{align*}
So, what we need now is an estimate of $\|v_{n+1}\|$. Using the elementary inequality
\[|\xi^+-\eta^+|^p\le|\xi-\eta|^{p-2}(\xi-\eta)(\xi^+-\eta^+) \quad (\xi,\eta\in\R),\]
testing \eqref{bound1} with $v_{n+1}$, and applying also \eqref{bound2}, we obtain
\begin{align*}
\|v_{n+1}\|^p &\le \langle A(v),v_{n+1}\rangle \\
&= \int_\Omega (\rho\|u\|_r)^{1-p}f(x,\rho\|u\|_rv)v_{n+1} {\rm d}x \\
&\le C\int_{\{v_{n+1}>0\}}\Big((\rho\|u\|_r)^{q-p}|v|^{q-1}+(\rho\|u\|_r)^{r-p}|v|^{r-1}\Big)v_{n+1} {\rm d}x \\
&\le C(\rho\|u\|_r)^{r-p}\int_{\{v_{n+1}>0\}}\Big((2^{n+1}-1)^{q-1}v_n^q+(2^{n+1}-1)^{r-1}v_n^r\Big) {\rm d}x \\
&\le C\,2^{(r-1)(n+1)}(\rho\|u\|_r)^{r-p}R_n^\frac{q}{r}.
\end{align*}
Concatenating the inequalities above we have
\[R_{n+1}\le C\,2^{\big(r+\frac{r^2}{p}-\frac{r}{p}-\frac{r^2}{p^*_s}\big)(n+1)}(\rho\|u\|_r)^{\frac{r^2}{p}-r}R_n^{1+\frac{q}{p}-\frac{r}{p^*_s}},\]
which rephrases as the recursive inequality
\begin{equation}\label{bound3}
R_{n+1}\le H^n(\rho\|u\|_r)^{\frac{r^2}{p}-r}R_n^{1+\beta},
\end{equation}
where $H>1$ and $0<\beta<1$ only depend on the data of \eqref{prob}. Now we set $\gamma=r\beta+r-r^2/p>0$, and fix
\[\rho=\max\left\{1,\|u\|_r^{-1},\eta^{-\frac{1}{\gamma}}\|u\|_r^{\big(\frac{r^2}{p}-r\big)\frac{1}{\gamma}}\right\}.\]
We prove that, provided $\rho$ is big enough, for all $n\in\N$
\begin{equation}\label{bound4}
R_n\le \frac{\eta^n}{\rho^r},
\end{equation}
for $\eta=H^{-1/\beta}\in (0,1)$. We argue by induction. We already know that $R_0\le\rho^{-r}$. Assuming that \eqref{bound4} holds for some $n\in\N$, by \eqref{bound3} we have
\[R_{n+1} \le H^n(\rho\|u\|_r)^{\frac{r^2}{p}-r}\Big(\frac{\eta^n}{\rho^r}\Big)^{1+\beta} \le \frac{\|u\|_r^{\frac{r^2}{p}-r}\eta^n}{\rho^{\gamma+r}} \le \frac{\eta^{n+1}}{\rho^r}.\]
By \eqref{bound4} we have $R_n\to 0$. This, in turn, implies that $v_n(x)\to 0$ a.e. in $\Omega$, so $v(x)\le 1$ a.e. in $\Omega$. An analogous argument applies to $-v$, so we have $v\in L^\infty(\Omega)$ and $\|v\|_\infty\le 1$, hence $u\in L^\infty(\Omega)$ and
\begin{align*}
\|u\|_\infty &\le \rho\|u\|_r \\
&= \max\left\{\|u\|_r,1,\eta^{-\frac{1}{\gamma}}\|u\|_r^{1+\big(\frac{r^2}{p}-r\big)\frac{1}{\gamma}}\right\} \\
&\le K(1+\|u\|_r^\alpha),
\end{align*}
for some $K>0$ and $\alpha>1$ only depending on the data of \eqref{prob}. This concludes the proof.
\end{proof}

\noindent
If, in ${\bf H}_3$, we assume $q=p$, then we can improve Theorem \ref{3main} in a twofold way: we may take any $r$ below the critical exponent, and the inequality relating the $L^\infty$-norms of solutions to the $L^r$-norms is of linear type.

\begin{cor}\label{3cor}
If ${\bf H}_3$ holds with $q=p\le r<p^*_s$, then for all $0<\eps<1$ there exists $K>0$, only depending on $s$, $p$, $\Omega$, $a$, and $r$, such that, for every weak solution $u\in X(\Omega)$ of \eqref{prob} with $\|u\|_r<K$, we have $u\in L^\infty(\Omega)$ and
\[\|u\|_\infty\le K^{-1}\|u\|_r.\]
\end{cor}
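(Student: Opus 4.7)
The strategy is to adapt the De~Giorgi--Moser iteration scheme of Theorem~\ref{3main}, but with a different rescaling tailored to the scale-invariant structure of the hypothesis when $q = p$. Specifically, I would replace the normalization $v = u/(\rho\|u\|_r)$ used before by a normalization $v = u/\mu$, where the constant $\mu > 0$ will ultimately be chosen to be proportional to $\|u\|_r$. With this rescaling, $v$ is a weak solution of $(-\Delta)^s_p\, v = \mu^{1-p} f(x, \mu v)$ in $\Omega$ with $v = 0$ outside, and by ${\bf H}_3$ with $q = p$ the right-hand side satisfies
\[\bigl|\mu^{1-p} f(x, \mu v)\bigr| \le a\bigl(|v|^{p-1} + \mu^{r-p}|v|^{r-1}\bigr).\]
The decisive point is that the $|v|^{p-1}$ term has no $\mu$-dependent prefactor at all, which is precisely what distinguishes this case from the general one treated in Theorem~\ref{3main}.

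Defining $v_n = (v - 1 + 2^{-n})^+$ and $R_n = \|v_n\|_r^r$ as before, I would test the rescaled equation with $v_{n+1}$, apply the elementary inequality $|\xi^+ - \eta^+|^p \le |\xi - \eta|^{p-2}(\xi - \eta)(\xi^+ - \eta^+)$, the inclusion $\{v_{n+1} > 0\} \subseteq \{0 < v < (2^{n+1}-1)v_n\}$, and the monotonicity $v_{n+1} \le v_n$, obtaining after a H\"older estimate on the $|v|^{p-1}$ contribution (to promote $L^p$ to $L^r$)
\[\|v_{n+1}\|^p \le C\, 2^{(r-1)(n+1)} \Bigl(|\Omega|^{\,1 - p/r} R_n^{p/r} + \mu^{r-p} R_n\Bigr).\]
Taking $\mu$ larger than the fixed quantity $|\Omega|^{(1-p/r)/(r-p)}$ and working inductively in the regime $R_n \le 1$ (so that $R_n \le R_n^{p/r}$), the two terms coalesce into
\[\|v_{n+1}\|^p \le C' \mu^{r-p}\, 2^{(r-1)(n+1)} R_n^{p/r}.\]
Combining with the fractional Sobolev and Chebyshev inequalities, exactly as at the corresponding stage of the proof of Theorem~\ref{3main}, yields a recursion of the form
\[R_{n+1} \le B\, H^{n+1} R_n^{1+\beta}, \qquad \beta = 1 - \frac{r}{p^*_s} > 0,\]
where $H > 1$ depends only on $s, p, \Omega, a, r$ and $B = C''\mu^{(r-p)r/p}$.

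The standard inductive Moser lemma guarantees $R_n \to 0$ as long as $R_0$ lies below a threshold $\delta = \delta(B, H, \beta)$. Since $R_0 \le (\|u\|_r/\mu)^r$, the plan is to choose $\mu$ to be a fixed multiple of $\|u\|_r$: setting $\mu = K^{-1}\|u\|_r$ for a suitable $K \in (0,1)$ depending only on $s, p, \Omega, a, r$, both the smallness of $R_0$ and the lower bounds $\mu \ge 1$, $\mu \ge |\Omega|^{(1-p/r)/(r-p)}$ are ensured precisely by the hypothesis $\|u\|_r < K$. Once $R_n \to 0$ one concludes that $v(x) \le 1$ a.e.\ in $\Omega$, and repeating the argument with $-v$ gives $\|v\|_\infty \le 1$, hence $\|u\|_\infty \le \mu = K^{-1}\|u\|_r$.

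The main obstacle is a mild circularity: the threshold $\delta$ depends on $B$ which depends on $\mu$, so $\mu$ has to be chosen simultaneously with $\delta$. The resolution is the scale-invariant choice $\mu \propto \|u\|_r$, which converts all the constraints on $\mu$ into a single smallness condition on $\|u\|_r$. This linear scaling between $\mu$ and $\|u\|_r$ is exactly what upgrades the power-type bound $\|u\|_\infty \le K(1 + \|u\|_r^\alpha)$ of Theorem~\ref{3main} to the linear bound claimed here.
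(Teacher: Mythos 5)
Your overall strategy is the same as the paper's: rescale $u$ by a constant proportional to $\|u\|_r$, run the De Giorgi-type iteration of Theorem \ref{3main} on the rescaled function, observe that when $q=p$ the recursion constant can be made independent of $u$, and conclude $\|v\|_\infty\le 1$, hence $\|u\|_\infty\le\mu=K^{-1}\|u\|_r$. The derivation of the recursion and the final bookkeeping are correct in outline. However, there is a concrete inconsistency in the constraints you impose on $\mu$. You require the lower bounds $\mu\ge 1$ and $\mu\ge|\Omega|^{(1-p/r)/(r-p)}$ in order to coalesce $|\Omega|^{1-p/r}R_n^{p/r}+\mu^{r-p}R_n$ into a single term $C\mu^{r-p}R_n^{p/r}$, and you claim these are ensured by the hypothesis $\|u\|_r<K$. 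They are not: with $\mu=K^{-1}\|u\|_r$ and $\|u\|_r<K$ one has $\mu<1$, and $\mu$ can be arbitrarily small, so neither lower bound can hold --- and small $\|u\|_r$ is exactly the regime the corollary addresses. Moreover, forcing $\mu$ large would be counterproductive: it inflates $B=C''\mu^{(r-p)r/p}$ and hence shrinks the admissible threshold for $R_0$, which is precisely the circularity you mention but do not actually resolve.

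The repair is to reverse the direction of the constraint. Since $r\ge p$ and $\mu=K^{-1}\|u\|_r<1$, you get $\mu^{r-p}\le 1$ for free; together with $R_n\le R_n^{p/r}$ (valid because $R_0\le 1$ and $(R_n)$ is nonincreasing) this gives
\[|\Omega|^{1-p/r}R_n^{p/r}+\mu^{r-p}R_n\le\big(|\Omega|^{1-p/r}+1\big)R_n^{p/r},\]
so the recursion becomes $R_{n+1}\le H^{n+1}R_n^{1+\beta}$ with $H>1$ depending only on $s$, $p$, $\Omega$, $a$, $r$ and \emph{not} on $\mu$ or $u$. The smallness threshold for $R_0$ is then a fixed constant, $R_0\le(\|u\|_r/\mu)^r=K^r$, and $K$ is chosen once and for all to meet that threshold (in the paper, $K=\eta^{1/(\beta r)}$ with $\eta=H^{-1/\beta}$). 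With this correction your argument coincides with the paper's proof, which normalizes by $v=\eps^{-1}u$ with $0<\eps<1$, $\eps\ge\|u\|_r$, and ultimately $\eps=K^{-1}\|u\|_r$.
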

\begin{proof}
Fix $0<\eps<1$. Let $u\in X(\Omega)$ be a weak solution of \eqref{prob} with $u^+\neq 0$ and $\|u\|_r\le\eps$. We set $v=\eps^{-1}u$. Then, $v\in X(\Omega)$ and $\|v\|_r\le 1$. For all $n\in\N$ we set $v_n=(v-1+2^{-k})^+$ and $R_n=\|v_n\|_r^r$. Reasoning as in the proof of Theorem \ref{3main}, we derive the following recursive inequality:
\begin{equation}\label{boundcor1}
R_{n+1}\le H^N R_n^{1+\beta},
\end{equation}
for some $H>1$, $0<\beta<1$ depending only on the data of \eqref{prob}. We set $\eta=H^{-1/\beta}\in (0,1)$ and $\delta=\eta^{1/(\beta r)}\eps\in(0,\eps)$. If $\|u\|_r=\delta$, then  for all $n\in\N$ we have
\begin{equation}\label{boundcor2}
R_n\le\frac{\delta^r}{\eps^r}\eta^n.
\end{equation}
Indeed, clearly $R_0\le\delta^r/\eps^r$. Moreover, if \eqref{boundcor2} holds for some $n\in\N$, then by \eqref{boundcor1} we have
\[R_{n+1}\le H^n\Big(\frac{\delta^r}{\eps^r}\eta^n\Big)^{1+\beta}=\frac{\delta^r}{\eps^r}\eta^{n+1}.\]
By \eqref{boundcor2} we have $R_n\to 0$ as $n\to\infty$, so $v(x)\le 1$ a.e. in $\Omega$. Reasoning in a similar way on $-v$, we get $\|v\|_\infty\le 1$, hence
\[\|u\|_\infty\le\eps=\eta^{-\frac{1}{\beta r}}\|u\|_r.\]
We set $K=\eta^{1/\beta r}$. Letting $\eps$ span the interval $(0,1)$, we see that for every weak solution $u\in X(\Omega)$ of \eqref{prob} with $\|u\|_r<K$ we have $u\in L^\infty(\Omega)$ and $\|u\|_\infty\le K^{-1}\|u\|_r$.
\end{proof}

\section{$p$-Superlinear case}
\label{psuper}

\noindent
In this section we study problem \eqref{prob}, rephrased as
\begin{equation}\label{lprob}
\left\{  \begin{array}{ll}
    (- \Delta)_p^s\, u=\lambda|u|^{p-2}u+g(x,u) &\mbox{in $\Omega$} \\
    u=0 &\mbox{on $\R^N\setminus\Omega$},
        \end{array}\right.
\end{equation}
where $\lambda\in\R$ is a parameter and the hypotheses on the reaction term are the following:
\begin{itemize}
\item[${\bf H}_4$] $g:\Omega\times\R\to\R$ is a Carath\'eodory mapping, $G(x,t)=\int_0^t g(x,\tau){\rm d}\tau$, and
\begin{itemize}
\item[$(i)$] $|g(x,t)|\le a(1+|t|^{r-1})$ a.e. in $\Omega$ and for all $t\in\R$ ($a>0$, $p<r<p^*_s$);
\item[$(ii)$] $0<\mu G(x,t)\le g(x,t)t$ a.e. in $\Omega$ and for all $|t|\ge R$ ($\mu>p$, $R>0$);
\item[$(iii)$] $\displaystyle\lim_{t\to 0}\frac{g(x,t)}{|t|^{p-1}}=0$ uniformly a.e. in $\Omega$.
\end{itemize}
\end{itemize}
Since $g(x,\cdot)$ does not necessarily vanish at infinity, hypotheses ${\bf H}_4$ classify problem \eqref{lprob} as $p$-{\em superlinear}. Besides, by ${\bf H}_4(iii)$ we have $g(x,0)=0$ a.e. in $\Omega$, so \eqref{lprob} admits the zero solution for all $\lambda\in\R$. By means of Morse theory and the spectral properties of $(-\Delta)^s_p$, we will prove the existence of a non-zero solution for all $\lambda\in\R$, requiring when necessary additional sign conditions on $G(x,\cdot)$ near zero. Results of this type were first proved for the $p$-Laplacian in \cite{DLP} (see also {\sc Perera \& Sim} \cite{PS1}).
\vskip2pt
\noindent
The main result of this section is the following:

\begin{thm}\label{4main}
If ${\bf H}_4$ and one of the following hold:
\begin{itemize}
\item[$(i)$] $\lambda\notin(\lambda_k)$;
\item[$(ii)$] $\lambda\in(\lambda_k)$ and $G(x,t)\ge 0$ a.e. in $\Omega$ and for all $|t|\le\delta$ (for some $\delta>0$);
\item[$(iii)$] $\lambda\in(\lambda_k)$ and $G(x,t)\le 0$ a.e. in $\Omega$ and for all $|t|\le\delta$ (for some $\delta>0$),
\end{itemize}
then problem \eqref{lprob} admits a non-zero solution.
\end{thm}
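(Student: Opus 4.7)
The plan is to apply Morse theory to the $C^1$ functional
\[\Phi_\lambda(u) = \frac{\|u\|^p}{p} - \frac{\lambda}{p}\|u\|_p^p - \int_\Omega G(x,u)\,{\rm d}x, \qquad u \in X(\Omega),\]
whose critical set coincides with the set of weak solutions of \eqref{lprob}; by $\mathbf{H}_4(iii)$ one has $g(x,0)=0$, so $0\in K(\Phi_\lambda)$. Verification of the Cerami condition is routine: $\mathbf{H}_4(ii)$ yields $G(x,t) \ge c_1|t|^\mu - c_2$ with $\mu > p$, and the usual manipulation of $\Phi_\lambda(u_n) - \mu^{-1}\langle\Phi'_\lambda(u_n), u_n\rangle$ along a Cerami sequence bounds $\|u_n\|$; Proposition \ref{2ps} then closes the argument.

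The strategy is to find, for each admissible $\lambda$, a dimension $k_\lambda \in \N_0$ with
\[C^{k_\lambda}(\Phi_\lambda, 0) \ne 0, \qquad C^k(\Phi_\lambda, \infty) = 0 \text{ for every } k \in \N_0,\]
and then to conclude via the Morse identity (Proposition \ref{morseid}): were $0$ the only critical point, the Poincar\'e polynomial $\sum_k {\rm rank}\,C^k(\Phi_\lambda, 0)\,t^k$ would factor as $(1+t)\,Q(t)$ with $Q$ having nonnegative integer coefficients, and the case-by-case analysis below will pin down this polynomial sharply enough --- typically as $t^{k_\lambda}$ --- to forbid such a factorization. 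The vanishing of $C^*(\Phi_\lambda, \infty)$ is the standard outcome for $p$-superlinear $\mathbf{C}$-functionals on an infinite-dimensional Banach space (see \cite{PAO}): by $\mathbf{H}_4(ii)$, $\Phi_\lambda(tu) \to -\infty$ as $t \to +\infty$ for every $u \ne 0$, so a radial deformation retracts $X(\Omega) \setminus \overline B_\rho(0)$ onto $\Phi_\lambda^\eta$ for $\eta \ll 0$, which is contractible in the ambient infinite-dimensional space.

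The computation of $C^*(\Phi_\lambda, 0)$ proceeds case by case, using the bound $|G(x,t)| \le \eps|t|^p + C_\eps|t|^r$ extracted from $\mathbf{H}_4(i)$, $\mathbf{H}_4(iii)$, and the continuous fractional Sobolev embedding. When $\lambda < \lambda_1$, the Rayleigh characterization \eqref{rayquot} gives $\Phi_\lambda(u) \ge c\|u\|^p - C\|u\|^r > 0$ for $0 < \|u\| \le \rho$, so $0$ is a strict local minimizer and Proposition \ref{2critgr}(i) yields $C^k(\Phi_\lambda, 0) = \delta_{k,0}\,\Z_2$. When $\lambda_k \le \lambda < \lambda_{k+1}$ with $k \ge 1$ --- which covers case (i) for $\lambda \in (\lambda_k, \lambda_{k+1})$ and the resonant case (ii) $\lambda = \lambda_k$ with $G(x,\cdot) \ge 0$ near zero --- I would use the symmetric cones
\[X_- = \{0\} \cup \{u \ne 0 : \Psi(u) \le \lambda_k\}, \qquad X_+ = \{0\} \cup \{u \ne 0 : \Psi(u) \ge \lambda_{k+1}\},\]
which by Proposition \ref{2lambdak} satisfy $i(X_- \setminus \{0\}) = i(X(\Omega) \setminus X_+) = k$. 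On $\overline B_\rho(0) \cap X_-$ the leading part $\|u\|^p - \lambda\|u\|_p^p$ is nonpositive, and either the strict inequality $\lambda > \lambda_k$ or --- in case (ii) --- the sign condition $G \ge 0$ absorbs the $o(\|u\|^p)$ remainder, so $\Phi_\lambda \le 0$; on $\overline B_\rho(0) \cap X_+$ the spectral gap with $\lambda_{k+1}$ dominates the remainder and produces $\Phi_\lambda \ge 0$. Proposition \ref{2degio} then yields $C^k(\Phi_\lambda, 0) \ne 0$. Case (iii) is analogous, but with the splitting shifted one level down: for $\lambda = \lambda_k$, $k \ge 2$, and $G \le 0$, the cones built from $\lambda_{k-1}$ and $\lambda_k$ give $C^{k-1}(\Phi_\lambda, 0) \ne 0$; the boundary instance $\lambda = \lambda_1$ with $G \le 0$ must be treated separately, since there both $(\|u\|^p - \lambda_1\|u\|_p^p)/p$ and $-\int G$ are nonnegative near $0$, so $0$ is a local minimizer and $C^k(\Phi_\lambda, 0) = \delta_{k,0}\,\Z_2$.

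The main obstacle is the sharp computation of $C^*(\Phi_\lambda, 0)$ in the resonant cases (ii) and (iii): the form $\|u\|^p - \lambda\|u\|_p^p$ vanishes identically along one of the cones and cannot by itself dominate the subquadratic remainder from $G$, so one must exploit the sign condition on $G$ to restore a definite sign to $\Phi_\lambda$. A secondary bookkeeping issue is the edge value $\lambda = \lambda_1$ in case (iii), where the shifted splitting would have dimension $0$ and the cohomological local splitting of Definition \ref{2clsplit} becomes vacuous; this is precisely why $\lambda = \lambda_1$ must be treated by the separate local-minimum argument sketched above.
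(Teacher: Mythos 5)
Your overall architecture --- trivial topology at infinity from the superlinearity, a non-trivial critical group at zero obtained from a local minimum or a cohomological local splitting keyed to the spectral position of $\lambda$, and a contradiction if $0$ were the only critical point --- is the same as the paper's. But there are two genuine gaps, one of which is the actual technical heart of the resonant cases. In cases $(ii)$ and $(iii)$ (and in your edge case $\lambda=\lambda_1$ with $G\le 0$) you invoke the sign condition ``$G(x,t)\ge 0$ (resp.\ $\le 0$) for $|t|\le\delta$'' to conclude that $\int_\Omega G(x,u)\,{\rm d}x$ has a sign for all $u\in\overline B_\rho(0)$. This does not follow: smallness of $\|u\|$ in $X(\Omega)$ gives no pointwise bound on $u$, so $u$ may exceed $\delta$ on a set of positive measure where $G(x,u)$ has the wrong sign, and the ``absorption'' you claim fails. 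The paper's proof handles this by replacing $\Phi$ with the truncated functional $\Phi_1$ (in which $G(x,u)$ is replaced by $G(x,\theta(u))$ with $\theta$ valued in $[-\delta,\delta]$, so the sign condition holds everywhere), and then proving $C^k(\Phi,0)=C^k(\Phi_1,0)$ via the homotopy-invariance Proposition \ref{2hominv}. Verifying hypothesis $(ii)$ of that proposition --- that $0$ is a uniformly isolated critical point along the homotopy --- is itself nontrivial and is exactly where the a priori $L^\infty$-bound of Corollary \ref{3cor} enters (Lemma \ref{4hom}): critical points of $\Phi_\tau$ that are small in $X(\Omega)$ are shown to be small in $L^\infty(\Omega)$, hence unaffected by the truncation. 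Your proposal contains none of this, and without it the sign computations in the resonant cases are unjustified.

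The second gap is in your endgame. From $K(\Phi)=\{0\}$ and $C^k(\Phi,\infty)=0$ for all $k$, the Morse identity gives $\sum_k{\rm rank}\,C^k(\Phi,0)\,t^k=(1+t)Q(t)$, and you propose to contradict this using $C^{k_\lambda}(\Phi,0)\ne 0$. That is not a contradiction: the polynomial $t^{k_\lambda}+t^{k_\lambda+1}=(1+t)t^{k_\lambda}$ is admissible and has $C^{k_\lambda}\ne 0$. You would need the critical groups at zero to be concentrated in the single dimension $k_\lambda$, but Proposition \ref{2degio} (the cohomological local splitting) only delivers non-vanishing of \emph{one} group, so the polynomial is not ``pinned down as $t^{k_\lambda}$'' as you assert. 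The paper avoids the Morse identity altogether: if $K(\Phi)=\{0\}$, the Second Deformation Theorem makes $\Phi^\eta$ a deformation retract of $\Phi^0\setminus\{0\}$ and $\Phi^0$ a deformation retract of $X(\Omega)$, whence $C^k(\Phi,0)=H^k(X(\Omega),\Phi^\eta)=0$ for \emph{all} $k$ by contractibility of $\Phi^\eta$; this directly contradicts the single non-vanishing group. Replacing your Morse-identity step by this deformation argument repairs the second gap; the first one requires importing the truncation machinery of Lemma \ref{4hom} and the $L^\infty$-estimates of Section \ref{inftybb}.
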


\noindent
In the present case, the energy functional takes for all $u\in X(\Omega)$ the form
\[\Phi(u)=\frac{\|u\|^p}{p}-\frac{\lambda\|u\|_p^p}{p}-\int_\Omega G(x,u){\rm d}x.\]

\begin{lem}\label{4ps}
The functional $\Phi\in C^1(X(\Omega))$ satisfies ${\bf PS}$. Moreover, there exists $\eta<0$ such that $\Phi^\eta$ is contractible.
\end{lem}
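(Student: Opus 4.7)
The plan is to verify the two assertions separately; the regularity $\Phi \in C^1(X(\Omega))$ is immediate since ${\bf H}_4(i)$ is just ${\bf H}_2$ with the restriction $p<r<p^*_s$, so the discussion following \eqref{2phi} applies verbatim.

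For the Palais--Smale condition, I would invoke Proposition \ref{2ps}, which reduces matters to showing that every sequence $(u_n) \subset X(\Omega)$ with $|\Phi(u_n)| \le M$ and $\Phi'(u_n) \to 0$ in $X(\Omega)^*$ is bounded. The standard Ambrosetti--Rabinowitz manipulation yields
\[
\mu\Phi(u_n) - \langle\Phi'(u_n),u_n\rangle = \Big(\frac{\mu}{p}-1\Big)\big(\|u_n\|^p - \lambda\|u_n\|_p^p\big) + \int_\Omega \big[g(x,u_n)u_n - \mu G(x,u_n)\big]\,{\rm d}x,
\]
and ${\bf H}_4(ii)$ forces the integrand to be bounded below by $-C$, so
\[
\Big(\frac{\mu}{p}-1\Big)\big(\|u_n\|^p - \lambda\|u_n\|_p^p\big) \le C + \varepsilon_n\|u_n\|, \qquad \varepsilon_n \to 0.
\]
For $\lambda \le 0$ this bounds $\|u_n\|$ at once. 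For $\lambda > 0$, I would combine it with the integrated AR bound $G(x,t) \ge c_1|t|^\mu - c_2$, which together with $\Phi(u_n) \le M$ gives $c_1\|u_n\|_\mu^\mu \le (\|u_n\|^p + \lambda\|u_n\|_p^p)/p + C$, and then use H\"older's inequality $\|u_n\|_p \le |\Omega|^{1/p-1/\mu}\|u_n\|_\mu$ (valid since $\mu > p$) to derive a sublinear-in-$\|u_n\|$ bound on $\|u_n\|_\mu$, hence on $\|u_n\|_p$. Plugging back forces boundedness of $\|u_n\|$, and Proposition \ref{2ps} supplies PS.

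For the second statement, the key computation is the derivative of $t \mapsto \Phi(tu)$. Using ${\bf H}_4(ii)$ in the form $pG(x,\sigma)-g(x,\sigma)\sigma \le (p-\mu)G(x,\sigma) \le 0$ for $|\sigma| \ge R$, together with boundedness on $\{|\sigma| \le R\}$, a direct calculation yields
\[
t\,\frac{d}{dt}\Phi(tu) \;=\; p\,\Phi(tu) + \int_\Omega\big[pG(x,tu)-g(x,tu)tu\big]\,{\rm d}x \;\le\; p\,\Phi(tu) + C
\]
for every $u \in X(\Omega)$ and $t > 0$, with $C$ independent of $u$. Fixing $\eta < -C/p$, the implication $\Phi(tu) \le \eta \Rightarrow \frac{d}{dt}\Phi(tu) < 0$ follows, so along each ray $t \mapsto \Phi(tu)$ crosses the level $\eta$ at a unique point $T(u) > 0$ and is strictly decreasing on $[T(u),\infty)$. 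Restricting to the unit sphere $S = \{u \in X(\Omega) : \|u\| = 1\}$, the implicit function theorem yields continuity of $T : S \to (0,\infty)$, and the formula $v \mapsto (v/\|v\|,\, \|v\| - T(v/\|v\|))$ is a homeomorphism of $\Phi^\eta$ onto $S \times [0,\infty)$, which deformation retracts onto the copy $\{T(u)u : u \in S\} \cong S$ of the unit sphere. Since $X(\Omega)$ is infinite-dimensional, $S$ is contractible by the classical theorem of Bessaga and Klee (see, e.g., \cite{PAO}); composing the retraction with a contraction of $S$ exhibits $\Phi^\eta$ as contractible.

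The main obstacle I anticipate is establishing the derivative bound $t\frac{d}{dt}\Phi(tu) \le p\,\Phi(tu) + C$ uniformly in $u$: this is where the strict positivity $0 < \mu G(x,t)$ for $|t| \ge R$ in ${\bf H}_4(ii)$ is indispensable, and it is precisely this uniformity that lets a single threshold $\eta$ work for all rays simultaneously. Once this is in hand, the remainder is a routine radial deformation coupled with the infinite-dimensional contractibility of the sphere, neither of which requires techniques specific to the fractional setting beyond the framework established in Section~\ref{preliminary}.
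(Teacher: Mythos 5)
Your proof is correct and follows essentially the same route as the paper: the Ambrosetti--Rabinowitz condition to bound Palais--Smale sequences (the paper combines the two steps by testing with the multiplier $\tfrac{\mu+p}{2}$ and absorbing $\|u_n\|_\mu^p$ into $-\|u_n\|_\mu^\mu$, while you separate the estimate into the AR identity plus the lower bound $G(x,t)\ge c_1|t|^\mu-c_2$, but the bookkeeping is equivalent), followed by the radial deformation of $\Phi^\eta$ onto a copy of the unit sphere and the Bessaga--Klee contractibility of spheres in infinite dimensions. In fact you are slightly more careful than the paper on one point: the inequality $\langle\Phi'(u),u\rangle\le p\Phi(u)+C$ (with the additive constant coming from the region $|t|\le R$ where ${\bf H}_4(ii)$ gives no sign information) is the correct form, and your choice $\eta<-C/p$ makes explicit why a single level works for all rays, whereas the paper states the estimate without the constant.
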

\begin{proof}
By ${\bf H}_4 (ii)$ we have a.e. in $\Omega$ and for all $t\in\R$
\begin{equation}\label{4a}
G(x,t)\ge C_0|t|^\mu-C_1 \ (C_0,C_1>0).
\end{equation}
Let $(u_n)$ be a sequence in $X(\Omega)$ such that $(\Phi(u_n))$ is bounded in $\R$ and $\Phi'(u_n)\to 0$ in $X(\Omega)^*$. By \eqref{4a} we have for all $n\in\N$
\begin{align*}
\Big(\frac{\mu}{p}-1\Big)\frac{\|u_n\|^p}{2} &= \frac{\mu+p}{2}\Phi(u_n)-\langle\Phi'(u_n),u_n\rangle+\frac{\lambda}{2}\Big(\frac{\mu}{p}-1\Big)\|u_n\|_p^p \\
& +\int_\Omega\Big(\frac{\mu+p}{2}G(x,u_n)-g(x,u_n)u_n\big){\rm d}x \\
&\le \|\Phi'(u_n)\|_*\|u_n\|+\frac{\lambda}{2}\Big(\frac{\mu}{p}-1\Big)\|u_n\|_p^p-\frac{\mu-p}{2}\|u_n\|_\mu^\mu+C \\
&\le \|\Phi'(u_n)\|_*\|u_n\|+C(1+\|u_n\|_\mu^p-\|u_n\|_\mu^\mu),
\end{align*}
hence $(u_n)$ is bounded in $X(\Omega)$. By Proposition \ref{2ps}, $\Phi$ satisfies ${\bf PS}$.
\vskip2pt
\noindent
Now, fix $u\in X(\Omega)\setminus\{0\}$. By \eqref{4a} we have for all $\tau>0$
\[\Phi(\tau u)\le\frac{\tau^p\|u\|^p}{p}-\frac{\lambda\tau^p\|u\|_p^p}{p}-C(\tau^\mu\|u\|_\mu^\mu-1),\]
and the latter tends to $-\infty$ as $\tau\to\infty$. In particular, $\Phi$ is unbounded below in $X(\Omega)$. Moreover, by ${\bf H}_4(ii)$ we have
\begin{align*}
\langle\Phi'(u),u\rangle &= p\Phi(u)+\int_\Omega\big(pG(x,u)-g(x,u)u\big){\rm d}x \\
&\le p\Phi(u),
\end{align*}
so there exists $\eta<0$ such that for all $u\in\Phi^\eta$ we have
\begin{equation}\label{4der}
\langle\Phi'(u),u\rangle <0.
\end{equation}
By the considerations above, we see that, for all $u\in X(\Omega)\setminus\{0\}$, there exists a unique $\tau(u)\ge 1$ such that, for all $\tau\in[1,\infty)$,
\[\Phi(\tau u) \begin{cases}
>\eta & \mbox{if $1\le\tau<\tau(u)$}\\
=\eta & \mbox{if $\tau=\tau(u)$}\\
<\eta & \mbox{if $\tau>\tau(u)$}.\\
\end{cases}\]
Moreover, by the Implicit Function Theorem and \eqref{4der}, the mapping $\tau:X(\Omega)\setminus\{0\}\to[1,\infty)$ is continuous. We define a continuous deformation $h:[0,1]\times(X(\Omega)\setminus\{0\})\to X(\Omega)\setminus\{0\}$ by setting for all $(t,u)\in [0,1]\times X(\Omega)\setminus\{0\}$
\[h(t,u)=(1-t)u+t\tau(u).\]
It is immediately seen that $\Phi^\eta$ is a strong deformation retract of $X(\Omega)\setminus\{0\}$. Similarly, by radial retraction we see that $\partial B_1(0)$ is a deformation retract of $X(\Omega)\setminus\{0\}$, and $\partial B_1(0)$ is contractible (as ${\rm dim}(X(\Omega))=\infty$), so $\Phi^\eta$ is contractible.
\end{proof}

\noindent
We need to compute the critical groups of $\Phi$ at $0$. With this aim in mind, we define for all $\tau\in[0,1]$ a functional $\Phi_\tau\in C^1(X(\Omega))$ by setting for all $u\in X(\Omega)$
\[\Phi_\tau(u)=\frac{\|u\|^p}{p}-\frac{\lambda\|u\|_p^p}{p}-\int_\Omega G(x,(1-\tau)u+\tau\theta(u)){\rm d}x,\]
where $\theta\in C^1(\R,[-\delta,\delta])$ ($\delta>0$) is a non-decreasing mapping such that
\[\theta(t) = \begin{cases}
t & \mbox{if $|t|\le\delta/2$}\\
\pm\delta & \mbox{if $\pm t\ge\delta$}.
\end{cases}\]
Clearly $\Phi_0=\Phi$. Critical groups of $\Phi$ and $\Phi_1$ at $0$ coincide:

\begin{lem}\label{4hom}
$0$ is an isolated critical point of $\Phi_\tau$, uniformly with respect to $\tau\in[0,1]$, and $C^k(\Phi,0)=C^k(\Phi_1,0)$ for all $k\in\N_0$.
\end{lem}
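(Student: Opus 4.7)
The critical-point equation for $\Phi_\tau$ reads
\[ A(u) = \lambda |u|^{p-2} u + \psi_\tau(u)\, g(x, w_\tau(u)), \qquad w_\tau(u) = (1-\tau)u + \tau \theta(u),\ \psi_\tau(u) = (1-\tau) + \tau \theta'(u). \]
Since $\theta$ has bounded derivative and $|w_\tau(u)|\le C|u|$ uniformly in $\tau$, combining ${\bf H}_4(i)$ and ${\bf H}_4(iii)$ yields $|\psi_\tau(u)\,g(x,w_\tau(u))|\le \varepsilon |u|^{p-1}+C_\varepsilon |u|^{r-1}$ for every $\varepsilon>0$, with constants independent of $\tau\in[0,1]$. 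Consequently, the reaction of $\Phi_\tau$ satisfies ${\bf H}_3$ with $q=p$ uniformly in $\tau$, and Corollary~\ref{3cor} provides $K>0$ (independent of $\tau$) such that every $u\in K(\Phi_\tau)$ with $\|u\|_r<K$ lies in $L^\infty(\Omega)$ with $\|u\|_\infty\le K^{-1}\|u\|_r$.

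To establish the uniform isolation, I may assume $0$ is an isolated critical point of $\Phi=\Phi_0$, otherwise $\Phi$ already admits a sequence of non-zero critical points accumulating at $0$ and Theorem~\ref{4main} follows immediately. If uniform isolation failed, there would exist $\tau_n\in[0,1]$ and $u_n\in K(\Phi_{\tau_n})\setminus\{0\}$ with $u_n\to 0$ in $X(\Omega)$. The compact embedding $X(\Omega)\hookrightarrow L^r(\Omega)$ forces $\|u_n\|_r\to 0$, and then the uniform estimate above gives $\|u_n\|_\infty\to 0$. For $n$ large, $\|u_n\|_\infty<\delta/2$, so $\theta(u_n)=u_n$ and $\theta'(u_n)=1$ a.e., which makes the truncation trivial: $w_{\tau_n}(u_n)=u_n$ and $\psi_{\tau_n}(u_n)=1$. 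Hence $u_n\in K(\Phi)\setminus\{0\}$ with $u_n\to 0$, contradicting the isolation of $0$ for $\Phi$.

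With the uniform isolation radius $\rho>0$ in hand and $U=\overline{B_\rho(0)}$, I close by applying Proposition~\ref{2hominv} to the homotopy $\{\Phi_\tau\}_{\tau\in[0,1]}$: (i) each $\Phi_\tau$ satisfies {\bf PS} on the bounded set $U$ via Proposition~\ref{2ps}, thanks to the $\tau$-uniform growth established above; (ii) uniform isolation yields $K(\Phi_\tau)\cap U=\{0\}$ for every $\tau$; (iii) writing $\Phi_\tau=I-\lambda J-H_\tau$ with $H_\tau(u)=\int_\Omega G(x,w_\tau(u))\,dx$ and the affine dependence $w_\tau(u)=u+\tau(\theta(u)-u)$, standard Nemytskii estimates together with the $L^r$-boundedness of $U$ yield uniform Lipschitz continuity of $H_\tau$ and $H_\tau'$ in $\tau$. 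Proposition~\ref{2hominv} then delivers $C^k(\Phi,0)=C^k(\Phi_1,0)$ for every $k\in\N_0$. The crux is the uniform isolation step: the $L^\infty$-regularity of Corollary~\ref{3cor} is decisive, because it collapses the truncation $\theta$ near $0$ and reduces the study of small critical points of the whole family $\Phi_\tau$ to that of $\Phi$ itself; without this reduction, one would be forced into a delicate blow-up/eigenfunction analysis that breaks down precisely when $\lambda\in\sigma(s,p)$, the case Theorem~\ref{4main} is designed to cover.
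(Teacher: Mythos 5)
Your proof is correct and follows essentially the same route as the paper's: a uniform growth bound on the truncated reaction, the uniform $L^\infty$-estimate of Corollary~\ref{3cor} to show that any sequence of non-zero critical points of $\Phi_{\tau_n}$ converging to $0$ eventually satisfies $\|u_n\|_\infty\le\delta/2$ and hence consists of critical points of $\Phi$ itself, followed by Proposition~\ref{2hominv}. Your explicit remark that one may assume $0$ is isolated for $\Phi$ (else Theorem~\ref{4main} is immediate) simply makes precise what the paper leaves tacit.
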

\begin{proof}
For $\eps>0$ small enough, we have $K(\Phi)\cap\overline B_\eps(0)=\{0\}$. We prove now that, taking $\eps>0$ even smaller if necessary, we have
\begin{equation}\label{4b}
K(\Phi_\tau)\cap\overline B_\eps(0)=\{0\} \ \mbox{for all $\tau\in[0,1]$.}
\end{equation}
We argue by contradiction: assume that there exist sequences $(\tau_n)$ in $[0,1]$ and $(u_n)$ in $X(\Omega)\setminus\{0\}$ such that $\Phi'_{\tau_n}(u_n)=0$ for all $n\in\N$, and $u_n\to 0$ in $X(\Omega)$. For all $n\in\N$, we set for all $(x,t)\in\Omega\times\R$
\[g_n(x,t)=(1-\tau_n+\tau_n\theta'(t))g(x,(1-\tau_n)t+\tau_n\theta(t)),\]
where $\theta\in C^1(\R,[-\delta,\delta])$ is defined as above. By ${\bf H}_4 (i), (iii)$, for all $n\in\N$, $g_n:\Omega\times\R\to\R$ is a Carath\'eodory mapping and satisfies a.e. in $\Omega$ and for all $t\in\R$
\[|\lambda|t|^{p-2}t+g_n(x,t)|\le a'(|t|^{p-1}+|t|^{r-1}),\]
for some $a'>0$ independent of $n\in\N$. Besides, for all $n\in\N$, $u_n$ is a weak solution of the auxiliary problem
\begin{equation} \label{4c}
\begin{cases}
(-\Delta)_p^s\,u=\lambda|u|^{p-2}u+g_n(x,u) & \text{in $\Omega$} \\
u  = 0 & \text{in $\R^N \setminus \Omega$},
\end{cases}
\end{equation}
By Corollary \ref{3cor}, there exists $K>0$ (independent of $n\in\N$) such that, for all weak solution $u\in X(\Omega)$ of \eqref{4c} with $\|u\|_r<K$ we have $u\in L^\infty(\Omega)$ with $\|u\|_\infty\le K^{-1}\|u\|_r$. By the continuous embedding $X(\Omega)\hookrightarrow L^r(\Omega)$, we have $u_n\to 0$ in $L^r(\Omega)$, hance the same concergence takes place in $L^\infty(\Omega)$ as well. In particular, for $n\in\N$ big enough we have $u_n\in\overline B_\eps(0)$ and $\|u_n\|_\infty\le\delta/2$, hence by definition of $\Phi_{\tau_n}$ it is easily seen that
\[\Phi'(u_n)=\Phi_{\tau_n}'(u_n)=0,\]
i.e., $u_n\in K(\Phi)\cap\overline B_\eps(0)\setminus\{0\}$, a contradiction. So \eqref{4b} is achieved.
\vskip2pt
\noindent
For all $0\le\tau\le 1$ the functional $\Phi_\tau\in C^1(X(\Omega))$ satisfies hypotheses analogous to ${\bf H}_4$, hence by Lemma \ref{4ps} $\Phi_\tau$ satisfies ${\bf PS}$ in $\overline B_\eps(0)$. Besides, clearly the mapping $\tau\mapsto\Phi_\tau$ is continuous in $[0,1]$. So, by Proposition \ref{2hominv} we have $C^k(\Phi,0)=C^k(\Phi_1,0)$ for all $k\in\N_0$.
\end{proof}

\noindent
We prove now that $\Phi$ has a non-trivial critical group at zero for all $\lambda\in\R$, under appropriate conditions. We begin with 'small' $\lambda$'s:

\begin{lem}\label{4zerosmall}
If one of the following holds:
\begin{itemize}
\item[$(i)$] $\lambda<\lambda_1$;
\item[$(ii)$] $\lambda=\lambda_1$, and $G(x,t)\le 0$ a.e. in $\Omega$ and for all $|t|\le\delta$ (for some $\delta>0$),
\end{itemize}
then $C^k(\Phi,0)=\delta_{k,0}\,\Z_2$ for all $k\in\N_0$.
\end{lem}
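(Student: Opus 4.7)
My plan is to combine the homotopy invariance from Lemma \ref{4hom} with Proposition \ref{2critgr}(i), reducing the computation to showing that $0$ is a local minimizer of the truncated functional $\Phi_1$. The advantage of passing from $\Phi$ to $\Phi_1$ is that $|\theta(u(x))|\le\delta$ holds pointwise, so I can exploit the local information on $G$ near $0$ (from ${\bf H}_4(iii)$ in case (i), from the sign hypothesis in case (ii)) without needing any auxiliary $L^\infty$ bound on $u$. By Lemma \ref{4hom}, the origin is an isolated critical point of $\Phi_1$ and $C^k(\Phi,0)=C^k(\Phi_1,0)$ for every $k\in\N_0$, so Proposition \ref{2critgr}(i) (together with $\dim X(\Omega)=\infty$) yields the claim as soon as $0$ is exhibited as a local minimizer of $\Phi_1$.

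For case (i), I would first fix $\eps>0$ with $\lambda+\eps<\lambda_1$ and, via ${\bf H}_4(iii)$, select $\delta_\eps>0$ with $|G(x,t)|\le(\eps/p)|t|^p$ for $|t|<\delta_\eps$. Choosing the $\delta$ in the construction of $\theta$ smaller than $\delta_\eps$, the bound $|\theta(u(x))|\le\min\{|u(x)|,\delta_\eps\}$ propagates to the pointwise estimate $|G(x,\theta(u(x)))|\le(\eps/p)|u(x)|^p$. Integrating and invoking the Rayleigh characterization \eqref{rayquot} then gives
\[
\Phi_1(u)\ge \frac{\|u\|^p-(\lambda+\eps)\|u\|_p^p}{p}\ge \frac{1}{p}\Bigl(1-\frac{\lambda+\eps}{\lambda_1}\Bigr)\|u\|^p\ge 0=\Phi_1(0)
\]
for every $u\in X(\Omega)$, so $0$ is in fact a global minimizer of $\Phi_1$.

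For case (ii), I would take the $\delta$ in the construction of $\theta$ equal to (or smaller than) the constant supplied by the hypothesis, so that $|\theta(u(x))|\le\delta$ and therefore $G(x,\theta(u(x)))\le 0$ a.e.\ in $\Omega$; since $\lambda=\lambda_1$, the Rayleigh inequality $\|u\|^p\ge\lambda_1\|u\|_p^p$ immediately gives $\Phi_1(u)\ge 0=\Phi_1(0)$, so $0$ is again a (non-strict) global minimizer.

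No serious obstacle is expected, as the main technical work has already been absorbed into Lemma \ref{4hom}. The only mildly delicate point is that the value of $\delta$ in the construction of $\theta$ (and hence the precise functional $\Phi_1$) is dictated by each case, but since Lemma \ref{4hom} gives $C^k(\Phi,0)=C^k(\Phi_1,0)$ for every admissible $\delta>0$, this causes no issue.
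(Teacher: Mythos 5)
Your proof is correct and follows the paper's strategy: exhibit $0$ as a local minimizer and invoke Proposition \ref{2critgr}$(i)$ together with the homotopy invariance of Lemma \ref{4hom}; the only (cosmetic) difference is in case $(i)$, where the paper works directly with $\Phi$ — deriving $\int_\Omega G(x,u)\,{\rm d}x=o(\|u\|^p)$ from ${\bf H}_4(i),(iii)$ so that $0$ is a strict local minimizer of $\Phi$ itself — whereas you route through $\Phi_1$ and obtain a global minimizer of the truncated functional. One small point in your case $(i)$: the stated properties of $\theta$ do not literally force $|\theta(t)|\le|t|$ on $[\delta/2,\delta]$, but $|\theta(t)|\le 2|t|$ always holds, which suffices since $\eps$ is arbitrary.
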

\begin{proof}
By ${\bf H}_4(iii)$, for all $\eps>0$ there exists $\rho>0$ such that a.e. in $\Omega$ and for all $|t|\le\rho$
\[|g(x,t)|\le\eps|t|^{p-1}.\]
So, for all $u\in X(\Omega)$ we have by ${\bf H}_4(i)$
\begin{align*}
\Big|\int_\Omega G(x,u){\rm d}x\Big| &\le \int_{\{|u|\le\rho\}}\frac{\eps|u|^p}{p}{\rm d}x+\int_{\{|u|>\rho\}}a\Big(|u|+\frac{|u|^r}{r}\Big){\rm d}x \\
&\le \frac{\eps\|u\|_p^p}{p}+C\|u\|_r^r,
\end{align*}
which, together with the continuous embeddings $X(\Omega)\hookrightarrow L^p(\Omega),\,L^r(\Omega)$ and by arbitrarity of $\eps>0$, yields
\begin{equation}\label{4d}
\int_\Omega G(x,u){\rm d}x=o(\|u\|^p) \ \mbox{as $\|u\|\to 0$}.
\end{equation}
Now we consider separately the two cases:
\begin{itemize}
\item[$(i)$] By \eqref{4d}, we have for all $u\in X(\Omega)$
\[\Phi(u)\ge\Big(1-\frac{\lambda}{\lambda_1}\Big)\frac{\|u\|^p}{p}+o(\|u\|^p),\]
and the latter is positive for $\|u\|>0$ small enough, hence $0$ is a strict local minimizer of $\Phi$. Thus, by Lemma \ref{2critgr}, for all $k\in\N_0$ we have $C^k(\Phi,0)=\delta_{k,0}\Z_2$.
\item[$(ii)$] By Lemma \ref{4hom}, we may pass to $\Phi_1\in C^1(X(\Omega))$. For all $u\in X(\Omega)$ we have $|\theta(u(x))|\le\delta$ a.e. in $\Omega$, so
\[\Phi_1(u)\ge\Big(1-\frac{\lambda}{\lambda_1}\Big)\frac{\|u\|^p}{p}-\int_\Omega G(x,\theta(u)){\rm d}x\ge 0,\]
hence $0$ is a local minimizer of $\Phi_1$. Thus, by Lemmas \ref{2critgr} and \ref{4hom}, for all $k\in\N_0$ we have $C^k(\Phi,0)=C^k(\Phi_1,0)=\delta_{k,0}\Z_2$.
\end{itemize}
This concludes the proof.
\end{proof}

\noindent
Now we consider 'big' $\lambda$'s:

\begin{lem}\label{4zerobig}
If one of the following holds for some $k\in\N$:
\begin{itemize}
\item[$(i)$] $\lambda_k<\lambda<\lambda_{k+1}$;
\item[$(ii)$] $\lambda_k=\lambda<\lambda_{k+1}$, and $G(x,t)\ge 0$ a.e. in $\Omega$ and for all $|t|\le\delta$ (for some $\delta>0$);
\item[$(iii)$] $\lambda_k<\lambda=\lambda_{k+1}$, and $G(x,t)\le 0$ a.e. in $\Omega$ and for all $|t|\le\delta$ (for some $\delta>0$),
\end{itemize}
then $C^k(\Phi,0)\ne 0$.
\end{lem}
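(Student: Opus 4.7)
The plan is to produce a cohomological local splitting of $\Phi$ (or, where needed, of the truncated functional $\Phi_1$) near $0$ in dimension $k$, and then invoke Proposition~\ref{2degio}. Motivated by the minimax characterization of the variational eigenvalues, I will take as cones
\[X_-=\{u\in X(\Omega):\|u\|^p\le\lambda_k\|u\|_p^p\},\qquad X_+=\{u\in X(\Omega):\|u\|^p\ge\lambda_{k+1}\|u\|_p^p\}.\]
These are closed and symmetric, and in each of the three cases the hypotheses force $\lambda_k<\lambda_{k+1}$, whence $X_-\cap X_+=\{0\}$. Via the radial retraction $u\mapsto u/I(u)^{1/p}$, the set $X_-\setminus\{0\}$ deformation retracts onto $\{u\in\mathcal{M}:\Psi(u)\le\lambda_k\}$ and $X(\Omega)\setminus X_+$ onto $\{u\in\mathcal{M}:\Psi(u)<\lambda_{k+1}\}$, so Proposition~\ref{2lambdak} yields $i(X_-\setminus\{0\})=i(X(\Omega)\setminus X_+)=k$. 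This takes care of condition $(i)$ in Definition~\ref{2clsplit}.

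For the sign condition $(ii)$, I will use the asymptotic estimate $\int_\Omega G(x,u)\,dx=o(\|u\|^p)$ from \eqref{4d} together with its truncated counterpart $\int_\Omega G(x,\theta(u))\,dx=o(\|u\|^p)$, both as $\|u\|\to 0$. In case $(i)$, no truncation is needed: on $X_-$, the inequality $\|u\|_p^p\ge\|u\|^p/\lambda_k$ combined with $\lambda>\lambda_k$ gives $\Phi(u)\le\tfrac{\lambda_k-\lambda}{p\lambda_k}\|u\|^p+o(\|u\|^p)\le 0$ for $\|u\|$ small, while on $X_+$ the analogous inequality $\|u\|_p^p\le\|u\|^p/\lambda_{k+1}$ and $\lambda<\lambda_{k+1}$ give $\Phi(u)\ge\tfrac{\lambda_{k+1}-\lambda}{p\lambda_{k+1}}\|u\|^p+o(\|u\|^p)\ge 0$. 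In cases $(ii)$ and $(iii)$ I will work with $\Phi_1$, whose critical groups at $0$ coincide with those of $\Phi$ by Lemma~\ref{4hom}; since $|\theta(u(x))|\le\delta$ pointwise, the sign assumption on $G$ transfers verbatim to $G(x,\theta(u))$. In case $(ii)$, $\lambda=\lambda_k$ makes the quadratic part nonpositive on $X_-$ and the nonnegativity of $G(x,\theta(u))$ then gives $\Phi_1(u)\le 0$ with no asymptotic needed, whereas on $X_+$ the strict gap $\lambda<\lambda_{k+1}$ makes the quadratic term dominate $o(\|u\|^p)$ as in case $(i)$. Case $(iii)$ is symmetric: on $X_+$ both the quadratic part (as $\lambda=\lambda_{k+1}$) and $-\int_\Omega G(x,\theta(u))\,dx$ are nonnegative, giving $\Phi_1(u)\ge 0$ directly, while on $X_-$ the strict inequality $\lambda_k<\lambda$ produces a dominant negative quadratic that absorbs the $o(\|u\|^p)$ error coming from $G$.

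The main technical point I expect to require care is the $o(\|u\|^p)$ estimate for the truncated term in cases $(ii)$ and $(iii)$. The argument is to split according to $\{|u|\le\delta/2\}$, where $\theta(u)=u$ and the standard asymptotic from ${\bf H}_4(iii)$ applies, and $\{|u|>\delta/2\}$, where $|G(x,\theta(u))|$ is uniformly bounded by ${\bf H}_4(i)$ and Chebyshev plus the compact embedding $X(\Omega)\hookrightarrow L^r(\Omega)$ bound the measure of this set by $O(\|u\|^r)$; since $r>p$ this contribution is $o(\|u\|^p)$. Once the splitting is in place, Proposition~\ref{2degio} delivers $C^k(\Phi_1,0)\ne 0$, and Lemma~\ref{4hom} transports the conclusion to $\Phi$.
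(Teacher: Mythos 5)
Your proof follows the paper's argument essentially verbatim: the same cones $X_\pm$, the same radial retraction onto $\mathcal{M}$ to identify $i(X_-\setminus\{0\})=i(X(\Omega)\setminus X_+)=k$ via Proposition \ref{2lambdak}, the same sign estimates yielding the cohomological local splitting, and the same treatment of cases $(ii)$--$(iii)$ through $\Phi_1$ and Lemma \ref{4hom} before invoking Proposition \ref{2degio}. The only addition is that you make explicit the estimate $\int_\Omega G(x,\theta(u))\,{\rm d}x=o(\|u\|^p)$ (which the paper leaves implicit in its ``same argument for $\Phi_1$'' remark), and your verification of it by splitting on $\{|u|\le\delta/2\}$ and its complement is correct.
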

\begin{proof}
First we assume $(i)$. Again, \eqref{4d} holds. We prove that $\Phi$ has a cohomological local splitting near $0$ in dimension $k\in\N$ (see Definition \ref{2clsplit}). Set
\[X_+=\{u\in X(\Omega)\,:\,\|u\|^p\ge\lambda_{k+1}\|u\|_p^p\}, \quad X_-=\{u\in X(\Omega)\,:\,\|u\|^p\le\lambda_k\|u\|_p^p\}.\]
Clearly, $X_\pm$ are symmetric closed cones with $X_+\cap X_-=\{0\}$ (as $\lambda_k<\lambda_{k+1}$). Defining the manifold $\mathcal{M}$ as in \eqref{2manifold}, by Proposition \ref{2lambdak} we have
\[i\big(\mathcal{M}\cap X_-\big)=i\big(\mathcal{M}\cap (X(\Omega)\setminus X_+)\big)=k.\]
We define a mapping $h:[0,1]\times(X_-\setminus\{0\})\to(X_-\setminus\{0\})$ by setting for all $(t,u)\in[0,1]\times(X_-\setminus\{0\})$
\[h(t,u)=(1-t)u+t\frac{p^{1/p}u}{\|u\|}.\]
It is easily seen that, by means of $h$, the set $\mathcal{M}\cap X_-$ is a deformation retract of $X_-\setminus\{0\}$, so we have $i(X_-\setminus\{0\})=k$. Analogously we see that $i(X(\Omega)\setminus X_+)=k$.
\vskip2pt
\noindent
Now we prove that, for $\rho>0$ small enough,
\begin{equation}\label{4e}
\Phi(u)\le 0 \ \mbox{for all $u\in\overline B_\rho(0)\cap X_-$,} \ \Phi(u)\ge 0 \ \mbox{for all $u\in\overline B_\rho(0)\cap X_+$.}
\end{equation}
Indeed, for all $u\in X_-\setminus\{0\}$, we have by \eqref{4d}
\[\Phi(u)\le\Big(1-\frac{\lambda}{\lambda_k}\Big)\frac{\|u\|^p}{p}+o(\|u\|^p)\]
as $\|u\|\to 0$, and the latter is negative for $\|u\|>0$ small enough. Besides, for all $u\in X_+\setminus\{0\}$, we have
\[\Phi(u)\ge\Big(1-\frac{\lambda}{\lambda_{k+1}}\Big)\frac{\|u\|^p}{p}+o(\|u\|^p)\]
as $\|u\|\to 0$, and the latter is positive for $\|u\|>0$ small enough. So \eqref{4e} holds.
\vskip2pt
\noindent
Now we apply Proposition \ref{2degio} and conclude that $C^k(\Phi,0)\ne 0$.
\vskip2pt
\noindent
If we assume either $(ii)$ or $(iii)$, we can develop the same argument for $\Phi_1$ (replacing one of the strict inequalities $\lambda_k<\lambda<\lambda_{k+1}$ with the convenient sign condition on $G(x,\theta(u(x)))$ a.e. in $\Omega$). Then we apply Lemma \ref{4hom} and obtain $C^k(\Phi,0)=C^k(\Phi_1,0)\ne 0$.
\end{proof}

\noindent
Now we are ready to prove our main result:
\vskip6pt
\noindent
{\em Proof of Theorem \ref{4main}.}
We argue by contradiction, assuming
\begin{equation}\label{4f}
K(\Phi)=\{0\}.
\end{equation}
Let $\eta<0$ be as in Lemma \ref{4ps}. Since there is no critical value for $\Phi$ in $[\eta,0)$ and $\Phi$ satisfies ${\bf PS}$ in $X(\Omega)$, by the Second Deformation Theorem the set $\Phi^\eta$ is a deformation retract of $\Phi^0\setminus\{0\}$. Analogously, since there is no critical value in $(0,\infty)$, $\Phi^0$ is a deformation retract of $X(\Omega)$. So we have for all $k\in\N_0$
\[C^k(\Phi,0)=H^k(\Phi^0,\Phi^0\setminus\{0\})=H^k(X(\Omega),\Phi^\eta)=0.\]
We can easily check that, in all cases $(i)-(iii)$, one of the assumptions of either Lemma \ref{4zerosmall} or \ref{4zerobig} holds for some $k\in\N_0$, a contradiction. Thus, \eqref{4f} must be false and there exists $u\in K(\Phi)\setminus\{0\}$, which turns out to be a non-zero solution of \eqref{lprob}. \qed

\section{Multiplicity for the coercive case}
\label{coercive}

\noindent
In this section, following the methods of {\sc Liu \& Liu} \cite{LL1} (see also {\sc Liu \& Li} \cite{LL2}), we prove a multiplicity result for problem \eqref{prob}, under assumptions which make the energy functional coercive. More precisely, by a truncation argument and minimization, we prove the existence of two constant sign solutions (one positive, the other negative), then we apply Morse theory to find a third non-zero solution. In doing so, we shall need a non-local analogous of a well-known result of {\sc Garc\`{\i}a Azorero, Peral Alonso $\&$ Manfredi} \cite{GPM} about local minimizers of functionals in H\"older and Sobolev topologies, which holds under suitable regularity assumptions.
\vskip2pt
\noindent
We assume that $\Omega$ has a $C^{1,1}$ boundary. The hypotheses on the reaction term $f$ in \eqref{prob} are the following:
\begin{itemize}
\item[${\bf H}_5$] $f:\Omega\times\R\to\R$ is a Carath\'eodory mapping, $F(x,t)=\int_0^t f(x,\tau){\rm d}\tau$ for all $(x,t)\in\Omega\times\R$, and:
\begin{itemize}
\item[$(i)$] $|f(x,t)|\le a(1+|t|^{r-1})$ a.e. in $\Omega$ and for all $t\in\R$ ($a>0$, $1<r<p^*_s$);
\item[$(ii)$] $f(x,t)t\ge 0$ a.e. in $\Omega$ and for all $t\in\R$;
\item[$(iii)$] $\displaystyle\lim_{t\to 0}\frac{f(x,t)-b|t|^{q-2}t}{|t|^{p-2}t}=0$ uniformly a.e. in $\Omega$ ($b>0$);
\item[$(iv)$] $\displaystyle\limsup_{|t|\to\infty}\frac{pF(x,t)}{|t|^p}<\lambda_1$ uniformly a.e. in $\Omega$.
\end{itemize}
\end{itemize}
We define $\Phi$ as in \eqref{2phi}.
\vskip2pt
\noindent
We denote by $C^0(\overline\Omega)$ and $C^{0,\alpha}(\overline\Omega)$ ($0<\alpha<1$) the usual H\"older spaces, endowed with the norms
\[\|u\|_{C^0(\overline\Omega)}=\max_{x\in\overline\Omega}|u(x)|, \quad \|u\|_{C^{0,\alpha}(\overline\Omega)}=\|u\|_{C^0(\overline\Omega)}+\sup_{x,y\in\overline\Omega, \ x\neq y}\frac{|u(x)-u(y)|}{|x-y|^\alpha},\]
respectively. Besides, we set $d(x)={\rm dist}(x,\partial\Omega)$ for all $x\in\overline\Omega$, fix $0<\gamma<1$ and introduce the weighted H\"older spaces
\[C^0_d(\overline\Omega)=\{u\in C^0(\overline\Omega): \ ud^{-\gamma}\in C^0(\overline\Omega)\}, \quad C^{0,\alpha}_d(\overline\Omega)=\{u\in C^0(\overline\Omega): \ ud^{-\gamma}\in C^{0,\alpha}(\overline\Omega)\},\]
endowed with the norms
\[\|u\|_{C^0_d(\overline\Omega)}=\|ud^{-\gamma}\|_{C^0(\overline\Omega)}, \quad \|u\|_{C^{0,\alpha}_d(\overline\Omega)}=\|ud^{-\gamma}\|_{C^{0,\alpha}(\overline\Omega)},\]
respectively. Clearly, if $u\in C^0_d(\overline\Omega)$, then $u=0$ on $\partial\Omega$. In general, all functions that vanish at $\partial\Omega$ will be identified with their zero-extensions to $\R^N$. By the Ascoli theorem, the embedding $C^{0,\alpha}_d(\overline\Omega)\hookrightarrow C^0_d(\overline\Omega)$ is compact for all $0<\alpha<1$. $C^0_d(\overline\Omega)$ is an ordered Banach space with order cone
\[C_+=\{u\in C^0_d(\overline\Omega): \ u(x)\ge 0 \ \mbox{for all $x\in\overline\Omega$}\}.\]

\begin{lem}\label{intcone}
The interior of $C_+$, with respect to the topology of $C^0_d(\overline\Omega)$, is
\[{\rm int}(C_+)=\{u\in C^0_d(\overline\Omega): \ u(x)d(x)^{-\gamma}> 0 \ \mbox{for all $x\in\overline\Omega$}\}.\]
\end{lem}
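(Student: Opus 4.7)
Write $V = \{u \in C^0_d(\overline\Omega) : u(x)d(x)^{-\gamma} > 0 \text{ for all } x \in \overline\Omega\}$ for the set on the right-hand side; here and throughout, $ud^{-\gamma}$ is understood as its (unique) continuous extension to $\overline\Omega$, whose existence is built into the definition of $C^0_d(\overline\Omega)$. The plan is to prove the two inclusions $V \subseteq \mathrm{int}(C_+)$ and $\mathrm{int}(C_+) \subseteq V$ separately. Throughout I will use the fact that the function $d^\gamma$ belongs to $C^0_d(\overline\Omega)$ with $\|d^\gamma\|_{C^0_d(\overline\Omega)} = 1$, since $d^\gamma \cdot d^{-\gamma} \equiv 1$ on $\Omega$ and extends by $1$ to $\partial\Omega$.

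First I would show $V \subseteq \mathrm{int}(C_+)$. Let $u \in V$. Since $ud^{-\gamma}$ is continuous on the compact set $\overline\Omega$ and strictly positive, there exists $\varepsilon > 0$ such that $u(x)d(x)^{-\gamma} \geq \varepsilon$ for every $x \in \overline\Omega$. For any $v \in C^0_d(\overline\Omega)$ with $\|u - v\|_{C^0_d(\overline\Omega)} < \varepsilon$ we have $|u(x)d(x)^{-\gamma} - v(x)d(x)^{-\gamma}| < \varepsilon$ uniformly on $\overline\Omega$, hence $v(x)d(x)^{-\gamma} > 0$. Since $d(x) \geq 0$, this forces $v(x) \geq 0$ on $\overline\Omega$, i.e.\ $v \in C_+$. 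Thus the open ball of radius $\varepsilon$ around $u$ lies in $C_+$, which puts $u$ in $\mathrm{int}(C_+)$.

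For the converse inclusion $\mathrm{int}(C_+) \subseteq V$, I would argue by contradiction. Fix $u \in \mathrm{int}(C_+)$ and choose $\varepsilon > 0$ with $\overline B_\varepsilon(u) \subseteq C_+$ in the $C^0_d$-topology. Suppose there is $x_0 \in \overline\Omega$ with $u(x_0)d(x_0)^{-\gamma} = 0$ (it cannot be negative, as $u \in C_+$). Set
\[
v = u - \tfrac{\varepsilon}{2}\, d^\gamma.
\]
Then $v \in C^0_d(\overline\Omega)$, and $vd^{-\gamma} = ud^{-\gamma} - \varepsilon/2 \in C^0(\overline\Omega)$, so $\|u - v\|_{C^0_d(\overline\Omega)} = \varepsilon/2 < \varepsilon$ and therefore $v$ must lie in $C_+$. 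If $x_0 \in \Omega$, then $d(x_0) > 0$ gives $v(x_0) = u(x_0) - (\varepsilon/2)d(x_0)^\gamma = -(\varepsilon/2)d(x_0)^\gamma < 0$, a contradiction. If $x_0 \in \partial\Omega$, then by continuity of $ud^{-\gamma}$ at $x_0$ there is an open neighborhood $U$ of $x_0$ in $\overline\Omega$ on which $u(x)d(x)^{-\gamma} < \varepsilon/2$; since $\Omega$ is open and $x_0 \in \partial\Omega$, we can pick $x_1 \in U \cap \Omega$, and then $v(x_1)d(x_1)^{-\gamma} < 0$ with $d(x_1) > 0$ yields $v(x_1) < 0$, again contradicting $v \in C_+$.

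The only subtle step is the boundary case $x_0 \in \partial\Omega$: there $v(x_0) = 0$, so the contradiction is not produced at $x_0$ itself but rather at nearby interior points, which is why continuity of $ud^{-\gamma}$ (built into the definition of $C^0_d(\overline\Omega)$) is essential. The rest is a direct compactness/continuity argument, and no deeper input — such as Hopf-type boundary estimates or regularity up to $\partial\Omega$ — is needed at this purely functional-analytic stage.
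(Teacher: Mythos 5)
Your proof is correct, and for the nontrivial inclusion it is genuinely different from (and more complete than) the argument printed in the paper. The paper dismisses $V\subseteq{\rm int}(C_+)$ as immediate from the definition of the norm, and then, under the heading of the ``reverse inclusion,'' runs a sequential compactness argument: it assumes $u(x)d(x)^{-\gamma}>0$ on $\overline\Omega$ together with $u_n\to u$ in $C^0_d(\overline\Omega)$ and $u_n(x_n)<0$, extracts $x_n\to x$, and concludes $u(x)d(x)^{-\gamma}\le 0$. As stated, that argument actually re-establishes $V\subseteq{\rm int}(C_+)$ (its hypothesis is ``$u\in V$ but $u\notin{\rm int}(C_+)$''), so the containment ${\rm int}(C_+)\subseteq V$ is never explicitly addressed there. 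Your treatment fills exactly this gap: the perturbation $v=u-\tfrac{\varepsilon}{2}d^\gamma$, together with the observation that $\|d^\gamma\|_{C^0_d(\overline\Omega)}=1$, shows that any $u\in C_+$ whose weighted extension vanishes somewhere on $\overline\Omega$ can be pushed out of $C_+$ by an arbitrarily small $C^0_d$-perturbation, with the boundary case handled correctly by passing to nearby interior points via density of $\Omega$ in $\overline\Omega$ and continuity of $ud^{-\gamma}$. Your quantitative version of the easy inclusion (uniform positivity of $ud^{-\gamma}$ on the compact set $\overline\Omega$) is the same idea the paper's sequential argument encodes. In short: both directions are proved, the perturbation by $d^\gamma$ is the key device the paper leaves implicit, and no further input is needed.
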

\begin{proof}
Clearly, due to the definition of $\|\cdot\|_{C^{0,\alpha}_d(\overline\Omega)}$, the set on the right-hand side is contained in ${\rm int}(C_+)$.
\vskip2pt
\noindent
We prove that the reverse inclusion holds, arguing by contradiction. Assume that $u\in C^0_d(\overline\Omega)$ and $u(x)d(x)^{-\gamma}>0$ in $\overline\Omega$, and that there exist sequences $(u_n)$ in $C^0_d(\overline\Omega)$, $(x_n)$ in $\overline\Omega$ such that $u_n\to u$ in $C^0_d(\overline\Omega)$ and $u_n(x_n)<0$ for all $n\in\N$. Up to a relabeled subsequence, $x_n\to x$ for some $x\in\overline\Omega$, so we have
\[\frac{u_n(x_n)}{\delta(x_n)^\gamma}\to\frac{u(x)}{d(x)^\gamma}.\]
Hence, $u(x)d(x)^{-\gamma}\le 0$, a contradiction.
\end{proof}

\noindent
We will assume that the following regularity condition holds:
\begin{itemize}
\item[{\bf RC}] Let $f$ satisfy ${\bf H}_5(i), (ii)$. Then, there exist $\alpha,\gamma\in(0,1)$, only depending on the data of \eqref{prob}, such that:
\begin{itemize}
\item[$(i)$] if $u\in X(\Omega)$ is a bounded weak solution of \eqref{prob}, then $u\in C^{0,\gamma}(\overline\Omega)\cap C^{0,\alpha}_d(\overline\Omega)$ and, if $\pm u(x)>0$ in $\Omega$, then $\pm u(x)d(x)^{-\gamma}>0$ in $\overline\Omega$;
\item[$(ii)$] if $u\in X(\Omega)$ and, for all $0<\eps<1$, the restriction $\restr{\Phi}{\overline B_\eps(u)}$ attains its infimum at $u_\eps\in\overline B_\eps(u)$, then $u_\eps\in C^{0,\alpha}_d(\overline\Omega)$ and
\[\sup_{0<\eps<1}\|u_\eps\|_{C^{0,\alpha}_d(\overline\Omega)}<\infty;\]
\end{itemize}
\end{itemize}

\begin{rem}\label{boundrem}
By the results of \cite{RS, RS1}, {\bf RC} holds with $\gamma=s$ if $p=2$.
See also \cite{ISMosc} for related developments. It would be interesting to investigate the
regularity up to the boundary as
well as Hopf type lemmas for the quasi-linear case $p\neq 2$.
\end{rem}

\noindent
The main result of this section is the following:

\begin{thm}\label{5main}
If hypotheses ${\bf H}_5$ and ${\bf RC}$ hold, then problem \eqref{prob} admits at least three non-zero solutions.
\end{thm}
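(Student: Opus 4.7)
I follow the Liu--Liu scheme: two constant-sign solutions by truncation and minimization, then a third via Morse theory, with the fractional regularity condition $\mathbf{RC}$ playing the role of the classical Garc\'ia Azorero--Peral Alonso--Manfredi result. \emph{Two constant-sign solutions.} I set $f_\pm(x,t)=f(x,\pm t^\pm)$ with primitives $F_\pm$, and truncated energies $\Phi_\pm(u)=\|u\|^p/p-\int_\Omega F_\pm(x,u)\,dx$. By $\mathbf{H}_5(iv)$, $\Phi_\pm$ is coercive and sequentially weakly lower semicontinuous, so each attains a global minimizer $u_\pm\in X(\Omega)$. Hypothesis $\mathbf{H}_5(iii)$ (in the relevant case $q<p$, where the constant $b>0$ is active) applied to a $\lambda_1$-eigenfunction $e_1>0$ gives $\Phi_+(te_1)<0$ for small $t>0$, since the $q$-concave term beats $t^p\lambda_1\|e_1\|_p^p/p$; analogously for $\Phi_-$, so $u_\pm\neq 0$. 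Testing $\Phi_\pm'(u_\pm)=0$ against $(\mp u_\pm)^\pm\in X(\Omega)$ and invoking $\mathbf{H}_5(ii)$ yields $\pm u_\pm\geq 0$ a.e., and Proposition~\ref{mp} upgrades this to $\pm u_\pm>0$ a.e. Then $\mathbf{RC}(i)$ puts $u_\pm\in C^{0,\alpha}_d(\overline\Omega)$ with $\pm u_\pm d^{-\gamma}>0$ on $\overline\Omega$, and Lemma~\ref{intcone} places $u_+\in\operatorname{int}(C_+)$ and $u_-\in-\operatorname{int}(C_+)$. Any $v$ in a sufficiently small $C^0_d$-ball of $u_\pm$ has $\pm v>0$, whence $\Phi(v)=\Phi_\pm(v)\geq\Phi_\pm(u_\pm)=\Phi(u_\pm)$: each $u_\pm$ is a $C^0_d$-local minimizer of $\Phi$.

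\emph{Promote to $X(\Omega)$-local minima.} By the Garc\'ia Azorero--Peral Alonso--Manfredi argument, powered by $\mathbf{RC}(ii)$: if $u_+$ failed to minimize $\Phi$ locally in $X(\Omega)$, then for all small $\varepsilon>0$, $\restr{\Phi}{\overline B_\varepsilon(u_+)}$ would attain its infimum at some $u_\varepsilon\in\partial B_\varepsilon(u_+)$ with $\Phi(u_\varepsilon)<\Phi(u_+)$. Condition $\mathbf{RC}(ii)$ uniformly bounds $(u_\varepsilon)$ in $C^{0,\alpha}_d(\overline\Omega)$, compactness of the embedding $C^{0,\alpha}_d\hookrightarrow C^0_d$ extracts a subsequence converging in $C^0_d(\overline\Omega)$, and the $X(\Omega)$-convergence $u_\varepsilon\to u_+$ identifies the limit as $u_+$, contradicting the $C^0_d$-local minimality from the previous step. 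Hence $u_\pm$ are $X(\Omega)$-local minimizers, and Proposition~\ref{2critgr}(i) gives $C^k(\Phi,u_\pm)=\delta_{k,0}\Z_2$.

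\emph{Critical groups at $0$, at $\infty$, and the Morse identity.} Coercivity and lower boundedness of $\Phi$ yield $\Phi^\eta=\emptyset$ for $\eta<\inf_{X(\Omega)}\Phi$, hence $C^k(\Phi,\infty)=H^k(X(\Omega))=\delta_{k,0}\Z_2$ (Banach spaces are contractible). The concave behavior at $0$ ($\mathbf{H}_5(iii)$ with $q<p$) gives $\Phi(tu)<0$ for every $u\neq 0$ and small $t>0$; by the standard argument for concave nonlinearities (cf.\ Liu--Liu), the sublevel $\Phi^0\cap B_\rho(0)$ is star-shaped with respect to $0$ for $\rho$ small, hence contractible, and its punctured version retracts radially onto an infinite-dimensional sphere, itself contractible. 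The long exact sequence of the pair $(\Phi^0\cap B_\rho,\Phi^0\cap B_\rho\setminus\{0\})$ then forces $C^k(\Phi,0)=0$ for every $k\in\N_0$. Finally, $\Phi$ satisfies $\mathbf{C}$: coercivity bounds any sequence with $\Phi'(u_n)\to 0$, and Proposition~\ref{2ps} concludes. Assuming for contradiction $K(\Phi)=\{0,u_+,u_-\}$, Proposition~\ref{morseid} collapses to the formal identity
\[
2 = 1+(1+t)\,Q(t)\qquad\text{in }\N_0[[t]],
\]
which admits no solution since $(1+t)$ does not divide $1$ in $\N_0[[t]]$. Hence $K(\Phi)$ contains a fourth element $u_0\notin\{0,u_+,u_-\}$, which is the sought third non-zero weak solution of \eqref{prob}.

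\emph{Main obstacle.} The two delicate points are (a) the lift of local minimality from the $C^0_d$- to the $X(\Omega)$-topology, hinging essentially on the regularity hypothesis $\mathbf{RC}(ii)$ (whose range of validity for $p\neq 2$ is open, cf.\ Remark~\ref{boundrem}); and (b) the vanishing $C^k(\Phi,0)=0$, which is \emph{not} a consequence of ``$0$ is a local maximum'' (false in the $X(\Omega)$-norm, since $\|\cdot\|_q$ and $\|\cdot\|$ are not equivalent in infinite dimension), but rests on the star-shapedness of $\Phi^0$ near $0$ together with the contractibility of spheres in an infinite-dimensional Banach space.
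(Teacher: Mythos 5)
Your proposal is correct and follows essentially the same route as the paper's proof: truncation and global minimization for $u_\pm$, the $\mathbf{RC}$-based passage from $C^0_d$- to $X(\Omega)$-local minimality (the paper's Proposition~\ref{xc-min}), the computation $C^k(\Phi,0)=0$ via the radial structure of $\Phi^0$ near the origin coming from ${\bf H}_5(iii)$, and the Morse identity to produce the third solution. The only differences are cosmetic: the paper phrases the vanishing of $C^k(\Phi,0)$ through explicit radial deformation retracts rather than star-shapedness, and closes the Morse identity by evaluating at $t=-1$ rather than by your (equally valid) observation that $(1+t)Q(t)=1$ has no solution with coefficients in $\N_0$.
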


\noindent
We say that $u\in X(\Omega)$ is a $C^0_d(\overline\Omega)$-{\em local minimizer} of $\Phi$ if there exists $\rho>0$ such that $\Phi(u+h)\ge\Phi(u)$ for all $h\in C^0_d(\overline\Omega)$, $\|h\|_{C^0_d(\overline\Omega)}<\rho$. We say that $u$ is a $X(\Omega)$-{\em local minimizer} of $\Phi$ if there exists $\rho>0$ such that $\Phi(u+h)\ge\Phi(u)$ for all $h\in X(\Omega)$, $\|h\|<\rho$. The following result relates the two types of local minimizers:

\begin{prop}\label{xc-min}
If $u\in X(\Omega)\cap C^0_d(\overline\Omega)$ is a $C^0_d(\overline\Omega)$-local minimizer of $\Phi$, then $u$ is a $X(\Omega)$-local minimizer of $\Phi$ as well.
\end{prop}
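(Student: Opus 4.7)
The plan is the standard Brezis--Nirenberg style argument (adapted to the fractional setting) by contradiction, using the regularity hypothesis \textbf{RC}(ii) to upgrade $X(\Omega)$-closeness to $C^0_d(\overline\Omega)$-closeness.

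\textbf{Step 1 (set-up by contradiction).} Suppose $u$ is not an $X(\Omega)$-local minimizer of $\Phi$. Then, for every $0<\eps<1$ the open ball $B_\eps(u)\subset X(\Omega)$ contains a point $v$ with $\Phi(v)<\Phi(u)$. For each such $\eps$, consider the minimization problem $\inf_{\overline B_\eps(u)}\Phi$. The functional $v\mapsto\|v\|^p/p$ is convex and continuous on $X(\Omega)$, hence weakly lower semicontinuous; the functional $v\mapsto\int_\Omega F(x,v)\,dx$ is weakly continuous since, by ${\bf H}_5(i)$ and the compact embedding $X(\Omega)\hookrightarrow L^r(\Omega)$, weak convergence in $X(\Omega)$ implies strong convergence in $L^r(\Omega)$. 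Thus $\Phi$ is weakly lower semicontinuous, and since $\overline B_\eps(u)$ is convex, bounded and closed in the reflexive space $X(\Omega)$, it is weakly compact. Hence $\Phi$ attains its infimum on $\overline B_\eps(u)$ at some $u_\eps\in\overline B_\eps(u)$, and by the above $\Phi(u_\eps)<\Phi(u)$.

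\textbf{Step 2 (convergence in $X(\Omega)$).} By construction $\|u_\eps-u\|\le\eps$, so $u_\eps\to u$ in $X(\Omega)$ as $\eps\searrow 0$.

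\textbf{Step 3 (upgrade to $C^0_d$-convergence via \textbf{RC}(ii)).} By the regularity hypothesis \textbf{RC}(ii), there is a constant $M>0$ (independent of $\eps$) such that
\[
\sup_{0<\eps<1}\|u_\eps\|_{C^{0,\alpha}_d(\overline\Omega)}\le M.
\]
Since the embedding $C^{0,\alpha}_d(\overline\Omega)\hookrightarrow C^0_d(\overline\Omega)$ is compact (Ascoli--Arzel\`a together with the definition of the weighted norms), the family $\{u_\eps\}$ is relatively compact in $C^0_d(\overline\Omega)$. Any $C^0_d$-cluster point must coincide with $u$, since $u_\eps\to u$ in $X(\Omega)$ (and hence in $L^p(\Omega)$) by Step~2. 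Consequently the whole family converges: $u_\eps\to u$ in $C^0_d(\overline\Omega)$ as $\eps\searrow 0$.

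\textbf{Step 4 (contradiction).} Let $\rho>0$ be the radius in the definition of $C^0_d(\overline\Omega)$-local minimizer. By Step~3, for all sufficiently small $\eps$ we have $\|u_\eps-u\|_{C^0_d(\overline\Omega)}<\rho$, so by hypothesis $\Phi(u_\eps)\ge\Phi(u)$, contradicting $\Phi(u_\eps)<\Phi(u)$. Therefore $u$ must be an $X(\Omega)$-local minimizer of $\Phi$.

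\textbf{Expected main obstacle.} The only nontrivial ingredients beyond routine weak-lower-semicontinuity arguments are (a) the existence of the constrained minimizer $u_\eps$ on $\overline B_\eps(u)$, which requires the compactness of the embedding $X(\Omega)\hookrightarrow L^r(\Omega)$ together with ${\bf H}_5(i)$ to ensure weak continuity of the nonlinear term; and (b) the uniform $C^{0,\alpha}_d$-bound on the constrained minimizers, which is precisely what is packaged into hypothesis \textbf{RC}(ii). Once those are in place, the compactness of $C^{0,\alpha}_d\hookrightarrow C^0_d$ finishes the argument in one line.
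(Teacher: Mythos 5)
Your proof is correct and follows essentially the same route as the paper: argue by contradiction, replace the competitors by constrained minimizers on shrinking balls $\overline B_\eps(u)$, invoke \textbf{RC}(ii) for the uniform $C^{0,\alpha}_d$-bound, and use the compact embedding $C^{0,\alpha}_d(\overline\Omega)\hookrightarrow C^0_d(\overline\Omega)$ to upgrade to $C^0_d$-convergence and contradict the $C^0_d$-local minimality. The only difference is cosmetic: you spell out the weak lower semicontinuity argument guaranteeing that the constrained minimizers exist, which the paper compresses into a ``without loss of generality.''
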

\begin{proof}
We argue by contradiction, assuming that there exists $\rho>0$ such that $\Phi(u+h)\ge\Phi(u)$ for all $h\in C^0_d(\overline\Omega)$, $\|h\|_{C^0_d(\overline\Omega)}<\rho$, while there exists a sequence $(h_n)$ in $X(\Omega)$ such that $\|h_n\|\le 1/n$ and $\Phi(u+h_n)<\Phi(u)$, for all $n\in\N$. With no loss of generality we may assume that
\[\Phi(u+h_n)=\inf_{h\in\overline B_{1/n}(0)}\Phi(u+h),\]
so by ${\bf RC}(ii)$ we can find $\alpha,\gamma\in(0,1)$ such that the sequence $(h_n)$ is bounded in $C^{0,\alpha}_d(\overline\Omega)$. By the compact embedding $C^{0,\alpha}_d(\overline\Omega)\hookrightarrow C^0_d(\overline\Omega)$, we have, up to a relabeled subsequence, $h_n\to 0$ in $C^0_d(\overline\Omega)$ (recall that $h_n(x)\to 0$ a.e. in $\Omega$). For $n\in\N$ big enough, we have $\|h_n\|_{C^0_d(\overline\Omega)}<\rho$ and $\Phi(u+h_n)<\Phi(u)$, against our assumption.
\end{proof}

\noindent
We introduce two truncated energy functionals by setting for all $u\in X(\Omega)$
\begin{equation}\label{phipm}
\Phi_{\pm}(u)=\frac{\|u\|^p}{p}-\int_{\Omega} F(x,\pm u^\pm){\rm d}x,
\end{equation}
where $t^\pm=\max\{\pm u,0\}$. The following lemma displays some properties of $\Phi_\pm$:

\begin{lem}\label{2phipm}
We have $\Phi_\pm\in C^1(X(\Omega))$. Moreover,
\begin{itemize}
\item[$(i)$] if $u\in X(\Omega)$ is a critical point of $\Phi_\pm$, then $\pm u(x)\ge 0$ a.e. in $\Omega$;
\item[$(ii)$] $0$ is not a local minimizer of $\Phi_\pm$;
\item[$(iii)$] $\Phi_\pm$ is coercive in $X(\Omega)$.
\end{itemize}
\end{lem}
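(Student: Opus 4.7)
The plan is to handle the three claims for $\Phi_+$ in sequence; the statements for $\Phi_-$ follow by symmetric substitutions. That $\Phi_\pm\in C^1(X(\Omega))$ is a routine consequence of ${\bf H}_5(i)$, the continuity of the truncation $u\mapsto u^\pm$ on $L^r(\Omega)$, and the standard theory of Nemytskii operators; the derivative reads
\[\langle\Phi_+'(u),v\rangle=\langle A(u),v\rangle-\int_\Omega f(x,u^+)\,v\,{\rm d}x\]
for all $v\in X(\Omega)$, where I use that ${\bf H}_5(iii)$ together with the Carath\'eodory property forces $f(x,0)=0$ a.e.\ in $\Omega$ (note that $q>1$ is implicit in ${\bf H}_5(iii)$, otherwise $b|t|^{q-2}t$ would not vanish as $t\to 0$).

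For $(i)$, I take $u\in K(\Phi_+)$ and test with $v=-u^-\in X(\Omega)$. The reaction integral on the right-hand side vanishes pointwise, since wherever $u^-(x)>0$ one has $u^+(x)=0$ and hence $f(x,u^+(x))=f(x,0)=0$. The remaining task is the non-local ``half-space'' inequality
\[\langle A(u),-u^-\rangle\ge\|u^-\|^p,\]
which I will derive from the pointwise estimate
\[|u(x)-u(y)|^{p-2}(u(x)-u(y))(u^-(y)-u^-(x))\ge|u^-(x)-u^-(y)|^p\]
verified on each sign-quadrant of $(u(x),u(y))$: the inequality is trivial when both values are nonnegative (both sides vanish); it is an equality when both are nonpositive (since then $|u(x)-u(y)|=|u^-(x)-u^-(y)|$); and in the two mixed-sign cases it reduces to $|u(x)-u(y)|\ge|u^-(x)-u^-(y)|$ combined with the monotonicity of $\tau\mapsto\tau^{p-1}$ on $(0,\infty)$. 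Integrating against $|x-y|^{-(N+sp)}$ then forces $\|u^-\|=0$, i.e.\ $u\ge 0$ a.e.

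For $(ii)$ I exploit ${\bf H}_5(iii)$ with the implicit hypothesis $1<q<p$ (which the authors surely intend, since otherwise the concave-type perturbation would not be effective), obtaining $F(x,t)=(b/q)\,t^q+o(t^p)$ as $t\to 0^+$. Picking any $u_0\in X(\Omega)\setminus\{0\}$ with $u_0\ge 0$ (for instance the positive first eigenfunction from Proposition~\ref{spec}$(ii)$), one gets
\[\Phi_+(\tau u_0)\le\frac{\tau^p}{p}\|u_0\|^p-\frac{b\tau^q}{q}\|u_0\|_q^q+o(\tau^p)\]
as $\tau\to 0^+$; since $q<p$, this is strictly negative for $\tau>0$ small, ruling out $0$ as a local minimum. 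For $(iii)$, ${\bf H}_5(iv)$ together with ${\bf H}_5(i)$ yields $\eps,C>0$ with $F(x,t)\le(\lambda_1-\eps)|t|^p/p+C$ a.e.\ in $\Omega$ for all $t\in\R$; plugging this into \eqref{phipm} and applying the Rayleigh characterization \eqref{rayquot} gives
\[\Phi_+(u)\ge\frac{1}{p}\|u\|^p-\frac{\lambda_1-\eps}{p\lambda_1}\|u\|^p-C|\Omega|=\frac{\eps}{p\lambda_1}\|u\|^p-C|\Omega|,\]
establishing coercivity.

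The main obstacle is the non-local half-space inequality in $(i)$. In the classical $p$-Laplacian setting the chain rule yields the immediate identity $\int|\nabla u|^{p-2}\nabla u\cdot\nabla u^-\,dx=\int|\nabla u^-|^p\,dx$, but here the double-integral structure of $A$ couples distinct points $x,y\in\R^N$, so one must dissect the four sign-quadrants separately; in particular the mixed-sign terms (say $u(x)>0>u(y)$) rely crucially on $p-1>0$ and on $|u(x)-u(y)|\ge|u^-(x)-u^-(y)|$ rather than on any pointwise identity.
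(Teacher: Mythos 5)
Your proposal is correct and follows essentially the same route as the paper: the derivative formula via $f(x,0)=0$, the test function $-u^-$ combined with the pointwise inequality $|\xi^--\eta^-|^p\le|\xi-\eta|^{p-2}(\xi-\eta)(\eta^--\xi^-)$ for part $(i)$, the small-$\tau$ scaling of the concave $|t|^q$-term (with the implicit $q<p$) for part $(ii)$, and the Rayleigh-quotient estimate from ${\bf H}_5(iv)$ for part $(iii)$. The only cosmetic differences are that you verify the elementary inequality case by case (the paper just cites it) and you deduce $f(x,0)=0$ from ${\bf H}_5(iii)$ rather than from the sign condition ${\bf H}_5(ii)$; both are valid.
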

\begin{proof}
We consider $\Phi_+$, the argument for $\Phi_-$ being analogous. By ${\bf H}_5(ii)$ we have $f(x,0)=0$ a.e. in $\Omega$, so $(x,t)\to f(x,t^+)$ is Carath\'eodory and satisfies a growth condition similar to ${\bf H}_5(i)$. So, $\Phi_+\in C^1(X(\Omega))$ with derivative given for all $u,v\in X(\Omega)$ by
\[\langle\Phi_+'(u),v\rangle=\langle A(u),v\rangle-\int_\Omega f(x,u^+)v{\rm d}x.\]
Now we prove $(i)$. Assume $\Phi_+'(u)=0$ in $X(\Omega)^*$. We recall the elementary inequality
\[|\xi^--\eta^-|^p\le|\xi-\eta|^{p-2}(\xi-\eta)(\eta^--\xi^-),\]
holding for all $\xi,\eta\in\R$. Testing with $-u^-\in X(\Omega)$, we have
\[\|u^-\|^p \le \langle A(u),-u^-\rangle= -\int_\Omega f(x,u^+)u^-{\rm d}x =0.\]
Hence, $u\ge 0$ a.e. in $\Omega$.
\vskip2pt
\noindent
Now we prove $(ii)$. By ${\bf H}_5(i),(ii)$ we have a.e. in $\Omega$ and for all $t\in\R$
\begin{equation}\label{qr1}
F(x,t^+)\ge C_0|t|^q-C_1|t|^r \ (C_0,C_1>0).
\end{equation}
Consider a function $\bar u\in X(\Omega)$, $\bar u(x)>0$ a.e. in $\Omega$. For all $\tau>0$ we have
\begin{align*}
\Phi_+(\tau\bar u) &= \frac{\tau^p\|\bar u\|^p}{p}-\int_\Omega F(x,\tau\bar u){\rm d}x \\
&\le \frac{\tau^p\|\bar u\|^p}{p}-\tau^q C_0\|\bar u\|_{L^q(\Omega)}^q+\tau^r C_1\|\bar u\|_r^r,
\end{align*}
and the latter is negative for $\tau>0$ close enough to $0$. So, $0$ is not a local minimizer of $\Phi_+$.
\vskip2pt
\noindent
Finally, we prove $(iii)$. By ${\bf H}_5(iv)$, for all $\eps>0$ small enough, we have a.e. in $\Omega$ and for all $t\in\R$
\[F(x,t^+)\le\frac{\lambda_1-\eps}{p}|t|^p+C.\]
By definition of $\lambda_1$, we have for all $u\in X(\Omega)$
\begin{align*}
\Phi_+(u) &\ge \frac{\|u\|^p}{p}-\frac{\lambda_1-\eps}{p}\|u\|_{L^p(\Omega)}^p-C \\
&\ge \frac{\eps}{p\lambda_1}\|u\|^p-C,
\end{align*}
and the latter goes to $\infty$ as $\|u\|\to\infty$. So, $\Phi_+$ is coercive in $X(\Omega)$.
\end{proof}

\noindent
Now we can prove our main result:
\vskip4pt
\noindent
{\em Proof of Theorem \ref{5main}.} The functional $\Phi_+$ is coercive and sequentially weakly lower semicontinuous in $X(\Omega)$, so there exists $u_+\in X(\Omega)$ such that
\[\Phi_+(u_+)=\inf_{u\in X(\Omega)}\Phi_+(u).\]
By Lemma \ref{2phipm} $(i)$, $(ii)$ we have $u_+(x)\ge 0$ a.e. in $\Omega$ and $u_+\neq 0$. By ${\bf H}_5 (ii)$ and Proposition \ref{mp}, we have $u_+(x)>0$ a.e. in $\Omega$.
\vskip2pt
\noindent
Now we invoke ${\bf RC}(i)$ and find $\alpha,\gamma\in (0,1)$ such that $u_+\in C^{0,\alpha}_d(\overline\Omega)$ and $u(x)d(x)^{-\gamma}>0$ in $\overline\Omega$. By Lemma \ref{intcone}, then, $u_+\in {\rm int}(C_+)$. Since $\Phi$ and $\Phi_+$ agree on ${\rm int}(C_+)$, for all $u\in X(\Omega)\cap{\rm int}(K)$ we have $\Phi(u_+)\le\Phi(u)$, namely, $u_+$ is a $C^0_d(\overline\Omega)$-local minimizer of $\Phi$. By Proposition \ref{xc-min}, $u_+$ turns out to be a $X(\Omega)$-local minimizer of $\Phi$, in particular $\Phi'(u_+)=0$ in $X(\Omega)^*$.
\vskip2pt
\noindent
Similarly, we find a local minimizer $u_-\in X(\Omega)\cap(-{\rm int}(C_+))$ of $\Phi$, in particular $\Phi'(u_-)=0$ in $X(\Omega)^*$.
\vskip2pt
\noindent
From now on we argue by contradiction, assuming that
\begin{equation}\label{5abs}
K(\Phi)=\{0,u_\pm\}.
\end{equation}
Note that $\Phi(u_\pm)<\Phi(0)=0$. In particular, $0$ and $u_\pm$ are isolated critical points, so we can compute the corresponding critical groups. Clearly, since $u_\pm$ are strict local minimizers of $\Phi$, we have for all $k\in\N_0$
\begin{equation}\label{5crit1}
C^k(\Phi,u_\pm)=\delta_{k,0}\Z_2.
\end{equation}
Now we prove that for all $k\in\N_0$
\begin{equation}\label{5crit2}
C^k(\Phi,0)=0.
\end{equation}
By \eqref{qr1}, for all $u\in X(\Omega)\setminus\{0\}$ we can find $\tau(u)\in(0,1)$ such that $\Phi(\tau u)<0$ for all $0<\tau<\tau(u)$. Besides, ${\bf H}_5(iii)$ implies
\[\lim_{t\to 0}\frac{qF(x,t)-f(x,t)t}{|t|^p}=0.\]
So, for all $\eps>0$ we can find $C_\eps>0$ such that a.e. in $\Omega$ and for all $t\in\R$
\[\abs{F(x,t)-\frac{f(x,t)t}{q}}\le \eps|t|^p+C_\eps |t|^r.\]
By the relations above we have
\[\int_\Omega\Big(F(x,u)-\frac{f(x,u)u}{q}\Big){\rm d}x=o(\|u\|^p)\]
as $\|u\|\to 0$. For all $u\in X(\Omega)\setminus\{0\}$ we have
\begin{align*}
\frac{1}{q}\restr{\frac{\rm d}{{\rm d}\tau}\Phi(\tau u)}{\tau=1} &= \frac{\|u\|^p}{q}-\int_\Omega \frac{f(x,u)u}{q}{\rm d}x \\
&= \Phi(u)+\Big(\frac{1}{q}-\frac{1}{p}\Big)\|u\|^p+o(\|u\|^p)
\end{align*}
as $\|u\|\to 0$. So we can find $\rho>0$ such that, for all $u\in B_\rho(0)\setminus\{0\}$ with $\Phi(u)>0$,
\begin{equation}\label{5slope}
\restr{\frac{\rm d}{{\rm d}\tau}\Phi(\tau u)}{\tau=1}>0.
\end{equation}
This assures uniqueness of $\tau(u)$ defined as above, for all $u\in B_\rho(0)$ with $\Phi(u)>0$. We set $\tau(u)=1$ for all $u\in B_\rho(0)$ with $\Phi(u)\le 0$, so we have defined a mapping $\tau: B_\rho(0)\to (0,1]$. By \eqref{5slope} and the Implicit Function Theorem, $\tau$ turns out to be continuous. We set for all $(t,u)\in[0,1]\times B_\rho(0)$
\[h(t,u)=(1-t)u+t\tau(u)u,\]
so $h:[0,1]\times B_\rho(0)\to B_\rho(0)$ is a continuous deformation and the set $B_\rho(0)\cap\Phi^0$ is a deformation retract of $B_\rho(0)$. Similarly we deduce that $B_\rho(0)\cap\Phi^0\setminus\{0\}$ is a deformation retract of $B_\rho(0)\setminus\{0\}$. So, by recalling that ${\rm dim}(X(\Omega))=\infty$, we have
\[C^k(\Phi,0)=H^k(B_\rho(0)\cap\Phi^0,B_\rho(0)\cap\Phi^0\setminus\{0\})=H^k(B_\rho(0),B_\rho(0)\setminus\{0\})=0,\]
the last passage following from contractibility of $B_\rho(0)\setminus\{0\}$.
\vskip2pt
\noindent
Now we compute the critical groups at infinity. Reasoning as in Lemma \ref{qr1}, we see that $\Phi$ is coercive. So, being also sequentially weakly lower semicontinuous, $\Phi$ is bounded below in $X(\Omega)$. Take
\[\eta<\inf_{u\in X(\Omega)}\Phi(u),\]
then we have for all $k\in\N_0$
\begin{equation}\label{5crit3}
C^k(\Phi,\infty)=H^k(X(\Omega),\Phi^\eta)=\delta_{k,0}\Z_2.
\end{equation}
We recall Proposition \ref{morseid}. In our case, by \eqref{5crit1}, \eqref{5crit2}, and \eqref{5crit3}, the Morse identity reads as
\[\sum_{k=0}^\infty\, 2\delta_{k,0}t^k=\sum_{k=0}^\infty\,\delta_{k.0}t^k+(1+t)Q(t),\]
where $Q$ is a formal power series with coefficients in $\N_0$. Choosing $t=-1$, the relation above leads to a contradiction, hence \eqref{5abs} cannot hold. So there exists a further critical point $\tilde u\in K(\Phi)\setminus\{0,u_\pm\}$ of $\Phi$. Thus, $u_+$, $u_-$, and $\tilde u$ are pairwise distinct, non-zero weak solutions of \eqref{prob}. \qed

\begin{rem}\label{mountain}
A careful look at the proof of Theorem \ref{5main} reveals the following situation: either \eqref{prob} admits infinitely many non-zero weak solutions (if $u_\pm$ is {\em not} a strict local minimizer), or it admits at least three non-zero weak solutions, one of which, denoted $\tilde u$, is of mountain pass type, i.e. $C^1(\Phi,\tilde u)\ne 0$ (recall Proposition \ref{2critgr}). This can be seen directly, by constructing a path joining $u_+$ and $u_-$, or by contradiction. Assume that $C^1(\Phi,\tilde u)=0$. Then, from the Morse identity we would have
\[h= 1+q_0+(q_0+q_1)t+t^2 Q_1(t),\]
where $h\in\N$, $h\ge 2$, $Q(t)=q_0+q_1t+\ldots$ ($q_k\in\N$ for all $k\in\N_0$). This implies $q_0\ge 1$, hence a first-order term appears in the right-hand side, a contradiction.
\end{rem}

\noindent
Combining ingeniously the techniques seen above and in Section \ref{psuper}, we can prove a multiplicity result for problem \eqref{lprob}. Such result requires modified hypotheses (involving the {\em second} variational eigenvalue defined in \eqref{minimax}):

\begin{itemize}
\item[${\bf H'}_5$] $g:\Omega\times\R\to\R$ is a Carath\'eodory mapping, $G(x,t)=\int_0^t g(x,\tau){\rm d}\tau$, and
\begin{itemize}
\item[$(i)$] $|g(x,t)|\le a(1+|t|^{r-1})$ a.e. in $\Omega$ and for all $t\in\R$ ($a>0$, $p<r<p^*_s$);
\item[$(ii)$] $\displaystyle\lim_{t\to 0}\frac{g(x,t)}{|t|^{p-1}}=0$ uniformly a.e. in $\Omega$;
\item[$(iii)$] $\lambda_2|t|^p+g(x,t)t\ge 0$ a.e. in $\Omega$ and for all $t\in\R$;
\item[$(iv)$] $\displaystyle\lim_{|t|\to\infty}\frac{\lambda|t|^p+pG(x,t)}{|t|^p}<\lambda_1$ uniformly a.e. in $\Omega$.
\end{itemize}
\end{itemize}
Note that, by ${\bf H'}_5 (iv)$, we have in particular
\[\lim_{|t|\to\infty}\frac{G(x,t)}{|t|^p}=-\infty,\]
and that we places ourselves again in the coercive case. Our multiplicity result is the following:

\begin{thm}\label{5per}
If ${\bf H'}_5$, {\bf RC}, and one of the following hold:
\begin{itemize}
\item[$(i)$] $\lambda>\lambda_2$, $\lambda\notin(\lambda_k)$;
\item[$(ii)$] $\lambda\ge\lambda_2$ and $G(x,t) \ge 0$ for a.e. in $\Omega$ and for all $|t| \le \delta$ (for some $\delta > 0$);
\item[$(iii)$] $\lambda\ge\lambda_3$ and $G(x,t) \le 0$ for a.e. in $\Omega$ and for all $|t| \le \delta$ (for some $\delta > 0$),
\end{itemize}
then problem \eqref{lprob} admits at least three non-zero solutions.
\end{thm}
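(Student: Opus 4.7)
The plan is to mirror the proof of Theorem~\ref{5main} to produce two constant-sign solutions, and then invoke the critical-group computation at the origin from Section~\ref{psuper} to force a third solution via the Morse identity. Write
\[\Phi(u)=\frac{\|u\|^p}{p}-\frac{\lambda\|u\|_p^p}{p}-\int_\Omega G(x,u)\,{\rm d}x.\]
Hypothesis ${\bf H'}_5(iv)$ yields the coercivity of $\Phi$ exactly as in Lemma~\ref{2phipm}(iii), and ${\bf H'}_5(iii)$ combined with $\lambda\ge\lambda_2$ gives $f(x,t)\,t\ge(\lambda-\lambda_2)|t|^p\ge 0$ for $f(x,t):=\lambda|t|^{p-2}t+g(x,t)$, so that Proposition~\ref{mp} is available.

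\emph{Two constant-sign solutions.} Define $\Phi_\pm$ as in \eqref{phipm} with $F(x,t)=\frac{\lambda}{p}|t|^p+G(x,t)$: both are coercive and sequentially weakly lower semicontinuous, hence attain global minimizers. Testing $\Phi_+$ on $\tau u_1$, with $u_1$ a positive $\lambda_1$-eigenfunction, and using $\lambda>\lambda_1$ (automatic since $\lambda\ge\lambda_2>\lambda_1$) together with $G(x,t)=o(|t|^p)$ near $0$ from ${\bf H'}_5(ii)$, gives $\inf\Phi_+<0$, so the minimizer $u_+$ is non-zero; the standard test with $-u_+^-$ combined with ${\bf H'}_5(iii)$ and Proposition~\ref{mp} upgrades this to $u_+>0$ a.e.\ in $\Omega$. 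Analogously, $u_-<0$ a.e.\ in $\Omega$. Regularity via ${\bf RC}(i)$ and Lemma~\ref{intcone} places $u_\pm\in\pm\text{int}(C_+)$, and since $\Phi\equiv\Phi_\pm$ on $\pm\text{int}(C_+)$, Proposition~\ref{xc-min} upgrades $u_\pm$ to $X(\Omega)$-local minimizers of $\Phi$, so that $C^k(\Phi,u_\pm)=\delta_{k,0}\Z_2$ for all $k$.

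\emph{Critical groups at $\infty$ and at $0$.} Coercivity yields $C^k(\Phi,\infty)=\delta_{k,0}\Z_2$. For the origin, each of (i), (ii), (iii) falls into the scope of Lemma~\ref{4zerobig} with some index $k\ge 2$: case~(i) gives $\lambda\in(\lambda_m,\lambda_{m+1})$ with $m\ge 2$ (alternative~(i) of the lemma); case~(ii) either reduces to this or, taking the largest $m$ with $\lambda_m=\lambda$, satisfies alternative~(ii) with $k=m\ge 2$; case~(iii) either reduces to case~(i) with $m\ge 3$ or, taking the smallest $m$ with $\lambda_m=\lambda$, satisfies alternative~(iii) with $k=m-1\ge 2$. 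Lemma~\ref{4zerobig} together with Proposition~\ref{2degio} then gives $C^k(\Phi,0)\ne 0$ for some $k\ge 2$. To close the argument one needs the stronger identification $C^j(\Phi,0)=\delta_{j,k}\Z_2$ for all $j$, obtainable in principle via a refined deformation-retract of $\Phi^0\cap U$ onto the negative cone $X_-\cap\overline B_\rho$, exploiting $i(X_-\setminus\{0\})=k$.

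\emph{Conclusion and main obstacle.} Suppose for contradiction that $K(\Phi)=\{0,u_+,u_-\}$. Granting the stronger vanishing of critical groups at $0$, the Morse identity of Proposition~\ref{morseid} becomes
\[2+t^k=1+(1+t)Q(t),\qquad Q(t)=\sum_{j\ge 0}q_j t^j,\ q_j\in\N_0,\]
i.e.\ $(1+t)Q(t)=1+t^k$. For $k$ odd this forces $Q(t)=1-t+t^2-\cdots+t^{k-1}$, which has negative coefficients; for $k$ even, $1+t^k$ is not divisible by $1+t$ (it equals $2$ at $t=-1$). Either alternative violates $q_j\in\N_0$, so $K(\Phi)$ must contain a further point $\tilde u$, producing three pairwise distinct non-zero solutions of \eqref{lprob}. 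The \textbf{main obstacle} is precisely this strengthening of Lemma~\ref{4zerobig}: the lemma only delivers one non-trivial critical group at $0$, and pinning down the vanishing of the remaining groups demands a delicate topological analysis of the negative cone $X_-$ associated to the nonlinear spectrum of $(-\Delta)^s_p$, in the spirit of Cingolani--Vannella and Liu--Li for the local $p$-Laplacian.
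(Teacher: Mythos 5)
Your construction of the two constant-sign solutions $u_\pm$, the computation $C^k(\Phi,u_\pm)=C^k(\Phi,\infty)=\delta_{k,0}\Z_2$, and the case analysis reducing $(i)$--$(iii)$ to Lemma \ref{4zerobig} with some $k\ge 2$ all match the paper's argument and are correct (including the observation that ${\bf H'}_5(iii)$ with $\lambda\ge\lambda_2$ gives $f(x,t)t\ge(\lambda-\lambda_2)|t|^p\ge 0$, so Proposition \ref{mp} applies). The problem is the final step. As you yourself point out, the Morse identity of Proposition \ref{morseid} cannot produce a contradiction from the single piece of information $C^k(\Phi,0)\ne 0$: writing $P_0(t)=\sum_j{\rm rank}\,C^j(\Phi,0)t^j$, the identity under $K(\Phi)=\{0,u_\pm\}$ reads $1+P_0(t)=(1+t)Q(t)$, and this is perfectly consistent with non-negative coefficients — e.g.\ $P_0(t)=t+t^2+t^3$ gives $Q(t)=1+t^2$ — even though the $t^k$-coefficient of $P_0$ is positive for some $k\ge 2$. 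So "granting the stronger vanishing" is not a minor technicality but the whole remaining content of the proof, and the strengthening you propose (showing $C^j(\Phi,0)=\delta_{j,k}\Z_2$) is not available from the cohomological local splitting machinery of Section \ref{psuper}; Lemma \ref{4zerobig} genuinely only yields non-vanishing in one dimension.

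The paper closes the argument differently, and you should too: instead of the global Morse identity it invokes \cite[Proposition 3.28(ii)]{PAO}, a result which, from $C^k(\Phi,0)\ne 0$ together with $C^k(\Phi,\infty)=\delta_{k,0}\Z_2$ (so that the $k$-th groups at $0$ and at $\infty$ disagree for $k\ge 2$), directly produces a critical point $\tilde u$ with either $\Phi(\tilde u)<0$ and $C^{k-1}(\Phi,\tilde u)\ne 0$, or $\Phi(\tilde u)>0$ and $C^{k+1}(\Phi,\tilde u)\ne 0$. Since $\Phi(\tilde u)\ne 0=\Phi(0)$ one gets $\tilde u\ne 0$, and since $k-1\ge 1$ while $C^j(\Phi,u_\pm)$ is concentrated in degree $0$, one gets $\tilde u\ne u_\pm$. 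This tool requires exactly the partial information you have established and nothing more, so substituting it for your Morse-identity step turns your proposal into a complete proof; without it (or an equivalent deformation-theoretic argument), the proof has a genuine gap.
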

\begin{proof}
Clearly $0\in K(\Phi)$. Reasoning as in the proof of Theorem \ref{5main}, we find $u_\pm\in K(\Phi)\setminus\{0\}$ with $C^k(\Phi,u_\pm) =\delta_{k,0}\Z_2$ and see that $C^k(\Phi,\infty) = \delta_{k,0}\Z_2$ for all $k\in\N_0$. Besides, in all cases $(i) - (iii)$, we argue as in Lemma \ref{4zerobig} and find $k\ge 2$ such that $C^k(\Phi,0) \ne 0$. Then we apply \cite[Proposition 3.28(ii)]{PAO} and deduce that there exists $\tilde u\in K(\Phi)$ such that either $\Phi(\tilde u)<0$ and $C^{k-1}(\Phi,\tilde u)\ne 0$, or $\Phi(\tilde u)>0$ and $C^{k+1}(\Phi,\tilde u)\ne 0$. Clearly $\tilde u\ne 0$. Moreover, since $k\ge 2$, it follows at once that $\tilde u\ne u_\pm$. Thus, $u_+$, $u_-$, and $\tilde u$ are pairwise distinct, non-zero weak solutions of \eqref{lprob}.
\end{proof}

\section{Asymptotically $p$-linear case}
\label{asymptotically}

\noindent
In this section we deal with problem \eqref{prob}, in the case when $f(x,\cdot)$ is {\em asymptotically $p$-linear} at infinity, i.e.
\[\lim_{|t|\to\infty}\frac{f(x,t)}{|t|^{p-2}t}=\lambda\]
uniformly a.e. in $\Omega$, for some $\lambda\in(0,\infty)$. The problem is said to be of {\em resonant} type if $\lambda\in\sigma(s,p)$, of {\em non-resonant} type otherwise. The two cases require different techniques to prove the existence of a non-zero solution (analogous results in the non-resonant case for the $p$-Laplacian were proved by {\sc Liu \& Li} \cite{LL2}, on the basis of {\sc Perera} \cite{P}). If $f(x,\cdot)$ has a $p$-linear behavior at zero as well, but with a different slope, then we can prove the existence of two non-zero solutions, one non-negative, the other non-positive, both in the resonant and non-resonant case, by employing a truncation method (see {\sc Zhang, Li, Liu \& Feng} \cite{ZLLF} and {\sc Li \& Zhou} \cite{LZ} for the $p$-Laplacian case).
\vskip2pt
\noindent
We state here our first set of hypotheses:
\begin{itemize}
\item[${\bf H}_6$] $f:\Omega\times\R\to\R$ is a Carath\'eodory mapping, $F(x,t)=\int_0^t f(x,\tau){\rm d}\tau$ for all $(x,t)\in\Omega\times\R$, and:
\begin{itemize}
\item[$(i)$] $|f(x,t)|\le a(1+|t|^{r-1})$ a.e. in $\Omega$ and for all $t\in\R$ ($a>0$, $1<r<p^*_s$);
\item[$(ii)$] $\displaystyle\lim_{|t|\to\infty}\frac{f(x,t)}{|t|^{p-2}t}=\lambda$ uniformly a.e. in $\Omega$ ($\lambda>0$);
\item[$(iii)$] $\displaystyle\lim_{t\to 0}\frac{f(x,t)-b|t|^{q-2}t}{|t|^{p-2}t}=0$ uniformly a.e. in $\Omega$ ($b>0$, $1<q<p$).
\end{itemize}
\end{itemize}
Clearly, ${\bf H}_6 (iii)$ implies that $f(x,0)=0$ a.e. in $\Omega$, so \eqref{prob} admits the zero solution. We seek non-zero solutions, so with no loss of generality we may assume that {\em all} critical points of the energy functional $\Phi\in C^1(X(\Omega))$ (defined as in \eqref{2phi}) are isolated.
\vskip2pt
\noindent
First we introduce our existence result for the non-resonant case:

\begin{thm}\label{6ex1}
If ${\bf H}_6$ holds with $\lambda\notin\sigma(s,p)$, then problem \eqref{prob} admits at least a non-zero solution.
\end{thm}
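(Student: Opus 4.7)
The plan is classical Morse theory: compute $C^k(\Phi,0)$ and $C^k(\Phi,\infty)$, show that they do not match, and invoke the Morse identity of Proposition~\ref{morseid} to force a non-zero critical point. Without loss of generality I assume $K(\Phi)$ is finite (otherwise infinitely many non-zero solutions already exist).

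\emph{Cerami condition.} By Proposition~\ref{2ps}, it is enough to show boundedness of an arbitrary Cerami sequence $(u_n)$. If $\|u_n\|\to\infty$, set $v_n=u_n/\|u_n\|$; along a subsequence $v_n\weaklyto v$ in $X(\Omega)$ and $v_n\to v$ in $L^p(\Omega)$. Dividing $\Phi'(u_n)\to 0$ by $\|u_n\|^{p-1}$, using ${\bf H}_6(i),(ii)$ and the compactness of the Sobolev embedding, one gets $\langle A(v_n),v_n-v\rangle\to 0$, so the $({\bf S})$ property yields $v_n\to v$ with $\|v\|=1$. Passing to the limit shows that $v$ solves $(-\Delta)^s_p v=\lambda|v|^{p-2}v$; since $\lambda\notin\sigma(s,p)$ this forces $v=0$, contradicting $\|v\|=1$.

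\emph{Critical groups at zero.} By ${\bf H}_6(iii)$, $F(x,t)=\frac{b}{q}|t|^q+o(|t|^p)$ as $t\to 0$, and since $q<p$ we have $\Phi(\tau u)<0$ for every $u\neq 0$ and every sufficiently small $\tau>0$. Repeating verbatim the argument that led to \eqref{5crit2} in the proof of Theorem~\ref{5main}, one computes
\[\frac{1}{q}\restr{\frac{\rm d}{{\rm d}\tau}\Phi(\tau u)}{\tau=1}=\Phi(u)+\Big(\frac{1}{q}-\frac{1}{p}\Big)\|u\|^p+o(\|u\|^p),\]
so on a small punctured ball $B_\rho(0)\setminus\{0\}$ the slope is strictly positive whenever $\Phi(u)>0$. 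The Implicit Function Theorem provides a continuous map $\tau:B_\rho(0)\to(0,1]$ with $\Phi(\tau(u)u)=\min\{\Phi(u),0\}$, and the radial homotopy $h(t,u)=(1-t+t\tau(u))u$ exhibits $B_\rho(0)\cap\Phi^0\setminus\{0\}$ as a deformation retract of $B_\rho(0)\setminus\{0\}$. Since $\dim X(\Omega)=\infty$ the latter is contractible, so $C^k(\Phi,0)=0$ for every $k\in\N_0$.

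\emph{Critical groups at infinity and conclusion.} By Proposition~\ref{spec}$(v)$ and Proposition~\ref{2lambdak} there exists $m\in\N$ with $\lambda_m<\lambda<\lambda_{m+1}$. Consider the homotopy $\Phi_\tau(u)=\|u\|^p/p-(1-\tau)\int_\Omega F(x,u)\,{\rm d}x-\tau\lambda\|u\|_p^p/p$, $\tau\in[0,1]$. A Cerami argument uniform in $\tau$, again using $\lambda\notin\sigma(s,p)$, gives $R>0$ with $K(\Phi_\tau)\subset B_R(0)$ for every $\tau$; homotopy invariance of critical groups at infinity then yields $C^k(\Phi,\infty)=C^k(\Phi_\infty,\infty)$ for the $p$-homogeneous model $\Phi_\infty(u)=\|u\|^p/p-\lambda\|u\|_p^p/p$. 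Combining the index identities of Proposition~\ref{2lambdak} with the $(p-1)$-homogeneity of $\Phi_\infty$, one constructs (as in \cite{PAO}) a closed symmetric subset of $\{\Phi_\infty\le\eta<0\}$ of cohomological index $m$ and a complementary symmetric set of index $\ge m+1$ on which $\Phi_\infty\ge 0$; the cohomological-index argument analogous to Proposition~\ref{2degio} then forces $C^m(\Phi_\infty,\infty)\ne 0$. If $K(\Phi)=\{0\}$, Proposition~\ref{morseid} would yield $C^k(\Phi,0)=C^k(\Phi,\infty)$ for all $k$, contradicting the last two steps; hence $K(\Phi)\setminus\{0\}\ne\emptyset$, producing the desired weak solution. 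The main obstacle is the final step: in contrast with the semilinear case, no linear eigenspace splitting is available, so one must feed both inequalities $\lambda_m<\lambda$ and $\lambda<\lambda_{m+1}$ into the cohomological-index machinery to secure the non-vanishing of $C^m(\Phi_\infty,\infty)$.
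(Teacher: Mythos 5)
Your overall strategy --- Cerami compactness from $\lambda\notin\sigma(s,p)$, the vanishing of $C^k(\Phi,0)$ forced by the concave term in ${\bf H}_6 (iii)$, and a non-trivial critical group at infinity for the non-resonant asymptotically $p$-linear functional --- is exactly the mathematics behind the paper's proof; the paper simply delegates the first and third ingredients to \cite[Theorem 5.7]{PAO}, which packages the uniform Cerami condition and the computation $C^m(\Phi,\infty)\ne 0$ for $\lambda_m<\lambda<\lambda_{m+1}$ into the conclusion that some $u\in K(\Phi)$ has $C^m(\Phi,u)\ne 0$. Your sketch of that third ingredient (homotopy to the homogeneous model $\Phi_\infty$ plus the cohomological-index computation) is essentially a re-derivation of what that citation provides; be aware that homotopy invariance of critical groups \emph{at infinity} needs more than uniform boundedness of the critical sets (one also needs uniform compactness along the homotopy and control of the low sublevel sets), but since the paper itself does not re-prove this, the level of detail is comparable.

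There is, however, one concrete gap: the case $0<\lambda<\lambda_1$, which is allowed by ${\bf H}_6 (ii)$ and is not excluded by $\lambda\notin\sigma(s,p)$. Your step ``there exists $m\in\N$ with $\lambda_m<\lambda<\lambda_{m+1}$'' fails there, because $\lambda_1=\min\sigma(s,p)$ and the sequence $(\lambda_k)$ starts at $k=1$; consequently your computation of $C^k(\Phi,\infty)$ does not apply in that range. The paper treats this case separately and more simply: for $\lambda<\lambda_1$ the functional $\Phi$ is coercive and sequentially weakly lower semicontinuous, hence attains a global minimum at some $u$ with $C^k(\Phi,u)=\delta_{k,0}\,\Z_2$ by Proposition \ref{2critgr}, and since $C^k(\Phi,0)=0$ for all $k$ this minimizer is non-zero. (Equivalently, in your framework, coercivity gives $C^k(\Phi,\infty)=\delta_{k,0}\,\Z_2$ and the Morse identity still yields the contradiction.) Adding this case makes your argument complete.
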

\begin{proof}
We first consider the case $0<\lambda<\lambda_1$. In such case, $\Phi$ is coercive and sequentially weakly lower semi-continuous, so it has a global minimizer $u\in K(\Phi)$. By Proposition \ref{2critgr} $(i)$ we have $C^k(\Phi,u)=\delta_{k,0}\Z_2$ for all $k\in\N_0$. If $\lambda>\lambda_1$, then we can find $k\in\N$ such that $\lambda_k<\lambda<\lambda_{k+1}$. By \cite[Theorem 5.7]{PAO}, there exists $u\in K(\Phi)$ such that $C^k(\Phi,u)\ne 0$. In either case, we have found $u\in K(\Phi)$ with a non-trivial critical group.
\vskip2pt
\noindent
By ${\bf H}_6 (iii)$, reasoning as in the proof of Theorem \ref{5main}, we can see that $C^k(\Phi,0)=0$ for all $k\in\N_0$, so $u\ne 0$.
\end{proof}

\noindent
In the study of the resonant case, we meet a significant difficulty: the energy functional $\Phi$ need not satisfy ${\bf PS}$. So, we need to introduce additional conditions in order to ensure compactness of critical sequences. We set for all $(x,t)\in\Omega\times\R$
\[H(x,t)=pF(x,t)-f(x,t)t.\]
We have the following existence result:

\begin{thm}\label{6ex2}
If ${\bf H}_6$ holds with $\lambda\in\sigma(s,p)$, and there exist $k\in\N$, $h_0\in L^1(\Omega)$ such that one of the following holds:
\begin{itemize}
\item[$(i)$] $\lambda_k<\lambda\le\lambda_{k+1}$, $H(x,t)\le -h_0(x)$ a.e. in $\Omega$ and for all $t\in\R$, and
\[\lim_{|t|\to\infty}H(x,t)=-\infty\]
uniformly a.e. in $\Omega$;
\item[$(iii)$] $\lambda_k\le\lambda<\lambda_{k+1}$, $H(x,t)\ge h_0(x)$ a.e. in $\Omega$ and for all $t\in\R$, and
\[\lim_{|t|\to\infty}H(x,t)=\infty\]
uniformly a.e. in $\Omega$,
\end{itemize}
then problem \eqref{prob} admits at least a non-zero solution.
\end{thm}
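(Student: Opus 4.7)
The strategy is the classical Morse-theoretic one: verify that $\Phi$ satisfies the Cerami condition, compute $C^k(\Phi,\infty)$ and $C^k(\Phi,0)$ for the prescribed index $k$, and argue that if $0$ were the only critical point these groups would have to coincide by the Morse identity, in contradiction with $C^k(\Phi,\infty)\neq 0$ and $C^k(\Phi,0)=0$.

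First I would establish Cerami compactness via Proposition~\ref{2ps}, reducing the matter to boundedness of Cerami sequences. Given $(u_n)\subset X(\Omega)$ with $\Phi(u_n)$ bounded and $(1+\|u_n\|)\Phi'(u_n)\to 0$, I would suppose $\|u_n\|\to\infty$ and normalize $v_n=u_n/\|u_n\|$; up to a subsequence $v_n\rightharpoonup v$ in $X(\Omega)$ and $v_n\to v$ in $L^p(\Omega)$ by compact embedding. Dividing the Euler equation by $\|u_n\|^{p-1}$ and passing to the limit (using ${\bf H}_6(ii)$, the growth of $f$, and the $({\bf S})$-property of $A$) shows that $v$ is either $0$ or a $\lambda$-eigenfunction. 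The key identity
\[
-\int_\Omega H(x,u_n)\,dx = p\,\Phi(u_n)-\langle\Phi'(u_n),u_n\rangle,
\]
together with $\langle\Phi'(u_n),u_n\rangle=o(1)$, forces $\int_\Omega H(x,u_n)\,dx$ to remain bounded. In case~$(i)$, if $v\neq 0$ then $|u_n(x)|\to\infty$ a.e.\ on $\{v\neq 0\}$, and Fatou's lemma applied to $-H(x,u_n)\ge h_0(x)$ yields $\int_\Omega H(x,u_n)\,dx\to-\infty$, a contradiction; if $v=0$, dividing $\langle\Phi'(u_n),u_n\rangle=o(1)$ by $\|u_n\|^p$ and using asymptotic linearity together with a Vitali-type argument gives $1=\|v_n\|^p\to \lambda\|v\|_p^p=0$, again a contradiction. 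Case~$(iii)$ is symmetric.

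For $C^k(\Phi,\infty)$, I would interpolate between $\Phi$ and a non-resonant reference functional: in case~$(i)$, pick $\tilde\lambda\in(\lambda_k,\lambda_{k+1})$ and set
\[
\Phi_\tau(u)=\frac{\|u\|^p}{p}-(1-\tau)\int_\Omega F(x,u)\,dx-\tau\,\frac{\tilde\lambda}{p}\|u\|_p^p.
\]
Since the one-sided condition on $H$ for $\Phi$ propagates to the analogous quantity for $\Phi_\tau$, the Step~1 argument yields Cerami uniformly in $\tau\in[0,1]$ and the critical groups at infinity become homotopy invariant; at $\tau=1$ the pure eigenvalue functional lies in the non-resonant regime, so Proposition~\ref{2lambdak} combined with \cite[Theorem~5.7]{PAO} gives $C^k(\Phi_1,\infty)\neq 0$, whence $C^k(\Phi,\infty)\neq 0$. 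For $C^k(\Phi,0)$, the hypothesis ${\bf H}_6(iii)$ coincides with ${\bf H}_5(iii)$, so the sublinear term $b|t|^q/q$ with $q<p$ dominates near zero and the radial retraction argument from the proof of Theorem~\ref{5main} applies verbatim, giving $C^k(\Phi,0)=0$ for every $k\in\N_0$. Assuming then $K(\Phi)=\{0\}$, Proposition~\ref{morseid} forces $C^k(\Phi,\infty)=C^k(\Phi,0)$ for every $k$, contradicting the previous two facts; hence there exists a non-zero critical point of $\Phi$, which is the desired non-zero solution of \eqref{prob}.

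The main obstacle is the uniform Cerami condition along the homotopy in Step~2, equivalently the verification that the Landesman--Lazer-type sign condition on $H$ survives the interpolation: in the nonlocal quasi-linear setting one cannot invoke the stronger $(S_+)$ property of the $p$-Laplacian and must rely solely on the condition $({\bf S})$ from Section~\ref{varform}, so the convergence of $v_n$ to the limit eigenfunction $v$ has to be extracted carefully from compact $L^p$-embeddings and asymptotic linearity rather than from a direct monotonicity argument.
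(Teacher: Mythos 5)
Your proposal is correct in outline and reaches the same conclusion, but it organizes the ``at infinity'' part of the argument quite differently from the paper. The paper does not prove the Cerami condition or compute $C^k(\Phi,\infty)$ by hand: it verifies the angle condition $(H_+)$ of \cite[Theorem 5.9]{PAO} -- namely that $\Psi(u)=\Phi(u)-\tfrac1p\langle\Phi'(u),u\rangle=-\tfrac1p\int_\Omega H(x,u)\,{\rm d}x$ is bounded below (by $H\le -h_0$ with $h_0\in L^1$) and that $\Psi(\tau u_n)\to\infty$ along normalized divergent sequences (by Fatou, exactly the argument you use inside your compactness step) -- and then invokes that theorem as a package delivering simultaneously the ${\bf C}$ condition and a critical point $u$ with $C^k(\Phi,u)\ne 0$; the final step $C^k(\Phi,0)=0$ via ${\bf H}_6(iii)$ is identical to yours. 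Your route replaces the citation by a direct proof of Cerami (normalization, the $({\bf S})$-property, and Fatou applied to $-\int H(x,u_n)\,{\rm d}x=\langle\Phi'(u_n),u_n\rangle-p\Phi(u_n)$) plus a homotopy from $\Phi$ to the non-resonant model $\tfrac1p\|u\|^p-\tfrac{\tilde\lambda}{p}\|u\|_p^p$ to get $C^k(\Phi,\infty)\ne 0$, followed by the Morse identity. This is a legitimate and more self-contained alternative, but the step you flag as the main obstacle is genuinely the crux: for your interpolation $\Phi_\tau$ one computes $H_\tau=(1-\tau)H$, so the divergence $H_\tau(x,t)\to-\infty$ degenerates as $\tau\to 1$ and uniform Cerami near $\tau=1$ must instead be extracted from non-resonance of the slope $(1-\tau)\lambda+\tau\tilde\lambda$; moreover homotopy invariance of $C^*(\cdot,\infty)$ needs uniformly bounded critical values along the family, not just Cerami for each $\tau$. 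These points can be settled, but they are precisely the technical content that the paper's appeal to $(H_+)$ and \cite[Theorem 5.9]{PAO} sidesteps, which is why the published proof is considerably shorter. One small simplification available to you: in your compactness step the $({\bf S})$-property already forces $\|v\|=\lim\|v_n\|=1$, so the subcase $v=0$ never occurs and the separate Vitali-type argument is unnecessary.
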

\begin{proof}
Since $\lambda\in\sigma(s,p)$, by Proposition \ref{spec} $(i)$ there exists $k\in\N$ such that $\lambda\in[\lambda_k,\lambda_{k+1}]$, and the latter is a non-degenerate interval. We assume $(i)$. We aim at applying \cite[Theorem 5.9]{PAO}, but first we need to verify some technical conditions. Set for all $u\in X(\Omega)$
\[\Psi(u)=\Phi(u)-\frac{1}{p}\langle\Phi'(u),u\rangle=-\frac{1}{p}\int_\Omega H(x,u)\,{\rm d}x.\]
Then, for all $u\in X(\Omega)$ we have
\[\Psi(u)\ge\frac{1}{p}\|h\|_1,\]
hence $\Psi$ is bounded below in $X(\Omega)$. Moreover, if $(u_n)$ is a sequence in $X(\Omega)$ such that $\|u_n\|\to \infty$, $v_n=\|u_n\|^{-1}u_n\to v$ in $X(\Omega)$, then in particular we have $v_n(x)\to v(x)$ a.e. in $\Omega$. So, by the Fatou Lemma we have for all $n\in\N$, $\tau\ge 1$
\[\Psi(\tau u_n) = -\frac{1}{p}\int_\Omega H(x,\|u_n\|\tau v_n)\,{\rm d}x,\]
and the latter tends to $\infty$ as $n\to\infty$. We conclude that condition $(H_+)$ holds (see \cite{PAO}, p. 82). So, by \cite[Theorem 5.9]{PAO}, $\Phi$ satisfies ${\bf C}$ and there exists $u\in K(\Phi)$ such that $C^k(\Phi,u)\ne 0$. Reasoning as in the proof of Theorem \ref{6ex1} we see that $u\ne 0$. Thus, \eqref{prob} has a non-zero solution.
\vskip2pt
\noindent
The argument for the case $(ii)$ is analogous.
\end{proof}

\begin{rem}\label{zerosol}
We note that, if we only assume ${\bf H}_6 (i),(ii)$, by the same arguments used in Theorems \ref{6ex1} and \ref{6ex2} we can prove the existence of a (possibly zero) solution. This is still a valuable information, since we have no condition on $f(\cdot,0)$.
\end{rem}

\noindent
In the remaining part of the section we deal with the case of a reaction term $f$ which behaves $p$-linearly both at infinity and at zero, but with different slopes. Our hypotheses are the following:
\begin{itemize}
\item[${\bf H'}_6$] $f:\Omega\times\R\to\R$ is a Carath\'eodory mapping, $F(x,t)=\int_0^t f(x,\tau){\rm d}\tau$ for all $(x,t)\in\Omega\times\R$, and:
\begin{itemize}
\item[$(i)$] $|f(x,t)|\le a(1+|t|^{r-1})$ a.e. in $\Omega$ and for all $t\in\R$ ($a>0$, $1<r<p^*_s$);
\item[$(ii)$] $\displaystyle\lim_{|t|\to\infty}\frac{f(x,t)}{|t|^{p-2}t}=\lambda$ uniformly a.e. in $\Omega$ ($\lambda>\lambda_1$);
\item[$(iii)$] $\displaystyle\lim_{t\to 0}\frac{f(x,t)}{|t|^{p-2}t}=\mu$ uniformly a.e. in $\Omega$ ($0<\mu<\lambda_1$).
\end{itemize}
\end{itemize}
By ${\bf H'}_6 (iii)$, we have $f(x,0)=0$ a.e. in $\Omega$, hence \eqref{prob} admits the zero solution. For non-zero solutions, we have the following multiplicity result:

\begin{thm}\label{6mult}
If ${\bf H'}_6$ holds, then problem \eqref{prob} admits at least two non-zero solutions, one non-negative, the other non-positive.
\end{thm}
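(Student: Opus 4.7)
\medskip
\noindent\textbf{Proof proposal.}

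The strategy is to revisit the truncation scheme of Theorem \ref{5main}, working separately with the functionals $\Phi_\pm$ defined in \eqref{phipm}, but replacing direct minimization with a Mountain Pass argument since the assumption $\lambda>\lambda_1$ prevents $\Phi_\pm$ from being bounded below. By the sign-testing used in Lemma \ref{2phipm}(i), every critical point of $\Phi_+$ (resp.\ of $\Phi_-$) is a non-negative (resp.\ non-positive) weak solution of \eqref{prob}, so it suffices to produce one non-zero critical point for each functional.

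I concentrate on $\Phi_+$. To show that $u=0$ is a strict local minimizer, I combine ${\bf H'}_6(iii)$ with the bound $|f(x,t)|\le C(1+|t|^{p-1})$ coming from ${\bf H'}_6(i)$--$(ii)$, so that for any $\eps\in(0,\lambda_1-\mu)$ and some $p<r<p^*_s$ there is $C_\eps>0$ with $F(x,t)\le(\mu+\eps)|t|^p/p+C_\eps|t|^r$; by \eqref{rayquot} and the embedding $X(\Omega)\hookrightarrow L^r(\Omega)$,
\[
\Phi_+(u)\ge \frac{1}{p}\Big(1-\frac{\mu+\eps}{\lambda_1}\Big)\|u\|^p-C\|u\|^r>0
\]
for $0<\|u\|$ small. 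Next, pick a positive $\lambda_1$-eigenfunction $\varphi_1$ (Proposition \ref{spec}(ii)); by ${\bf H'}_6(ii)$ and dominated convergence,
\[
\lim_{\tau\to+\infty}\frac{\Phi_+(\tau\varphi_1)}{\tau^p}=\frac{\|\varphi_1\|^p}{p}\Big(1-\frac{\lambda}{\lambda_1}\Big)<0,
\]
so $\Phi_+(\tau_*\varphi_1)<0$ for some $\tau_*>0$. These two estimates furnish the mountain-pass geometry.

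The main obstacle will be verifying the Cerami condition ${\bf C}$ for $\Phi_+$, because the hypotheses do not exclude $\lambda\in\sigma(s,p)$. Let $(u_n)$ be a Cerami sequence: if $(\|u_n\|)$ is bounded, compactness follows from Proposition \ref{2ps}, so assume $\|u_n\|\to\infty$, normalize $v_n=u_n/\|u_n\|$, and extract a subsequence with $v_n\weaklyto v$ in $X(\Omega)$. Dividing the Cerami identity by $\|u_n\|^{p-1}$, the bound $|f(x,t)|\le C(1+|t|^{p-1})$ together with Vitali's theorem gives $f(x,u_n^+)/\|u_n\|^{p-1}\to\lambda(v^+)^{p-1}$ strongly in $L^{p/(p-1)}(\Omega)$; testing with $v_n-v$ and using the $({\bf S})$-property of $A$ then forces $v_n\to v$ in $X(\Omega)$, so $\|v\|=1$ and $v$ weakly solves $(-\Delta)^s_p v=\lambda(v^+)^{p-1}$. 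Testing this limit equation against $-v^-\in X(\Omega)$ via the elementary inequality recalled in Lemma \ref{2phipm} yields $\|v^-\|^p\le 0$, so $v\ge 0$; then $v$ is a non-trivial non-negative $\lambda$-eigenfunction of $(-\Delta)^s_p$, which by Proposition \ref{spec}(ii)--(iii) forces $\lambda=\lambda_1$, contradicting $\lambda>\lambda_1$.

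With ${\bf C}$ in hand, the Mountain Pass Theorem produces $u_+\in X(\Omega)\setminus\{0\}$ with $\Phi_+'(u_+)=0$; Lemma \ref{2phipm}(i) gives $u_+\ge 0$, and since $\Phi$ and $\Phi_+$ agree on non-negative functions, $u_+$ solves \eqref{prob}. The symmetric argument applied to $\Phi_-$, with $-\varphi_1$ playing the role of $\varphi_1$ to provide the direction of negative energy, yields the required non-positive non-zero solution $u_-$.
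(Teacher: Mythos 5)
Your proposal is correct and follows essentially the same route as the paper: truncation to $\Phi_\pm$, mountain pass geometry from $\mu<\lambda_1<\lambda$, and compactness obtained by ruling out a normalized blow-up limit that would be a positive $\lambda$-eigenfunction with $\lambda>\lambda_1$, contradicting Proposition \ref{spec}. The only cosmetic difference is that the paper packages the blow-up argument as a uniform lower bound $\|A(u)\mp\lambda B_\pm(u)\|_*\ge\rho\|u\|^{p-1}$ (Lemma \ref{6bpm}) and then applies it to Palais--Smale sequences, whereas you run the same argument directly on the unbounded Cerami sequence.
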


\begin{rem}\label{strict}
If, beside ${\bf H'}_6$, we also assume a sign condition of the type $f(x,t)t\ge 0$ a.e. in $\Omega$ and for all $t\in\R$, then by applying Proposition \ref{mp} we can prove the existence of a strictly positive and of a strictly negative solution.
\end{rem}

\noindent
Since $f(\cdot,0)=0$, we can define truncated energy functionals $\Phi_\pm\in C^1(X(\Omega))$ as in \eqref{phipm}. We have for all $u,v\in X(\Omega)$
\[\langle\Phi'_\pm(u),v\rangle=\langle A(u)\mp\lambda B_\pm(u),v\rangle-\int_\Omega g_\pm(x,u)v\,{\rm d}x,\]
where we set for all $(x,t)\in\Omega\times\R$
\[g_\pm(x,t)=f(x,\pm t^\pm)\mp\lambda(t^\pm)^{p-1}\]
and for all $u,v\in X(\Omega)$
\[\langle B_\pm(u),v\rangle=\int_\Omega (u^\pm)^{p-1}v\, {\rm d}x.\]
By the compact embedding $X(\Omega)\hookrightarrow L^p(\Omega)$, $B_\pm:X(\Omega)\to X(\Omega)^*$ is a completely continuous operator.

\begin{lem}\label{6bpm}
There exists $\rho>0$ such that $\|A(u)\mp\lambda B_\pm(u)\|_*\ge\rho\|u\|^{p-1}$ for all $u\in X(\Omega)$.
\end{lem}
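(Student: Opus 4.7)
The plan is to argue by contradiction via weak compactness, exploiting the positive $(p-1)$-homogeneity of both $A$ and $B_\pm$ together with the $({\bf S})$-property of $A$ and the complete continuity of $B_\pm$. If no such $\rho>0$ exists, then, since for $t>0$ one has $A(tu)=t^{p-1}A(u)$ and $B_\pm(tu)=t^{p-1}B_\pm(u)$, I may rescale and extract a sequence $(u_n)\subset X(\Omega)$ with $\|u_n\|=1$ and
\[
\|A(u_n)\mp\lambda B_\pm(u_n)\|_*\to 0.
\]
Up to a subsequence, $u_n\weaklyto u$ in $X(\Omega)$ and $u_n\to u$ in $L^p(\Omega)$ by the compact embedding $X(\Omega)\hookrightarrow L^p(\Omega)$.

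Next, since $B_\pm$ is completely continuous (recalled immediately before the lemma), $B_\pm(u_n)\to B_\pm(u)$ strongly in $X(\Omega)^*$, hence $A(u_n)\to\pm\lambda B_\pm(u)$ in $X(\Omega)^*$. Pairing with the bounded sequence $(u_n-u)$ gives $\langle A(u_n),u_n-u\rangle\to 0$, so the property $({\bf S})$ of $A$ stated in Subsection~\ref{varform} upgrades weak to strong convergence: $u_n\to u$ in $X(\Omega)$. In particular $\|u\|=1$, so $u\neq 0$, and the limit satisfies
\[
A(u)=\pm\lambda B_\pm(u)\quad\text{in }X(\Omega)^*.
\]

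The main technical step, and the one I expect to be the crux of the argument, is to show that this identity forces $u$ to have a definite sign. Treating the $+$ case (the $-$ case being symmetric, with $u^+$ in place of $-u^-$), I test against $-u^-\in X(\Omega)$. Using the elementary pointwise inequality
\[
|\xi^- - \eta^-|^p\le |\xi-\eta|^{p-2}(\xi-\eta)(\eta^- - \xi^-),\qquad \xi,\eta\in\R,
\]
already exploited in Section~\ref{coercive}, and integrating against the kernel $|x-y|^{-(N+sp)}$ in the defining double integral of $A$, I obtain $\langle A(u),-u^-\rangle\ge\|u^-\|^p$. On the other hand, since $u^+u^-\equiv 0$,
\[
\langle B_+(u),-u^-\rangle=-\int_\Omega (u^+)^{p-1}u^-\,{\rm d}x=0.
\]
Therefore $\|u^-\|^p\le 0$, i.e.\ $u\ge 0$ a.e.\ in $\Omega$.

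With $u\ge 0$ and $u\neq 0$, the identity becomes $\langle A(u),v\rangle=\lambda\int_\Omega u^{p-1}v\,{\rm d}x$ for every $v\in X(\Omega)$, so $u$ is a non-negative, nontrivial $\lambda$-eigenfunction of $(-\Delta)^s_p$ on $X(\Omega)$. By Proposition~\ref{spec}$(ii),(iii)$, every eigenfunction associated with any $\lambda\in\sigma(s,p)\setminus\{\lambda_1\}$ changes sign, so necessarily $\lambda=\lambda_1$. This contradicts hypothesis ${\bf H'}_6(ii)$, which requires $\lambda>\lambda_1$. The $-$ case yields, by testing with $u^+$, a nonpositive nontrivial $\lambda$-eigenfunction and the identical contradiction; the common obstruction to bypass is the handling of $u^\pm$ in the nonlocal operator, which is precisely what the elementary inequality above makes available.
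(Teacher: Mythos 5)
Your proof is correct and follows essentially the same route as the paper's: rescale by $(p-1)$-homogeneity, pass to a weak limit, use the $({\bf S})$-property to get strong convergence to a nontrivial $u$ solving $A(u)=\pm\lambda B_\pm(u)$, force a sign on $u$ via the elementary inequality tested against $\mp u^\mp$, and contradict Proposition~\ref{spec}. The only cosmetic difference is that the paper upgrades $u\ge 0$ to $u>0$ via the strong maximum principle before invoking Proposition~\ref{spec}$(iii)$, whereas you apply $(ii)$--$(iii)$ directly to the non-negative eigenfunction; both are valid.
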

\begin{proof}
We deal with $A-\lambda B_+$ (the argument for $A+\lambda B_-$ is analogous). We argue by contradiction: let $(u_n)$, $(\eps_n)$ be sequences in $X(\Omega)$ and in $(0,\infty)$, respectively, such that $\eps_n\to 0$ as $n\to \infty$, and for all $n\in\N$
\[\|A(u_n)-\lambda B_+(u_n)\|_*=\eps_n\|u_n\|^{p-1}.\]
Since $A-\lambda B_+$ is $(p-1)$-homogeneous, we may assume $\|u_n\|=1$ for all $n\in\N$. So $(u_n)$ is bounded, and passing to a relabeled subsequence we have $u_n\rightharpoonup u$ in $X(\Omega)$, $u_n\to u$ in $L^p(\Omega)$ and $(u_n^+)^{p-1}\to (u^+)^{p-1}$ in $L^{p'}(\Omega)$. For all $n\in\N$ we have
\begin{align*}
\big|\langle A(u_n),u_n-u\rangle\big| &\le \big|\langle A(u_n)-\lambda B_+(u_n),u_n-u\rangle\big|+\lambda\big|\langle B_+(u_n),u_n-u\rangle\big| \\
&\le \eps_n\|u_n-u\|+\lambda\|u_n^+\|_p^{p-1}\|u_n-u\|_p,
\end{align*}
and the latter tends to $0$ as $n\to\infty$. By the $({\bf S})$-property of the operator $A$, we deduce $u_n\to u$ in $X(\Omega)$. So, $\|u\|=1$ and for all $v\in X(\Omega)$
\[\langle A(u),v\rangle=\lambda\int_\Omega (u^+)^{p-1}v\, {\rm d}x.\]
Reasoning as in Lemma \ref{2phipm} $(i)$, we see that $u(x)\ge 0$ a.e. in $\Omega$. By Proposition \ref{mp}, then, we have $u(x)>0$ a.e. in $\Omega$. Thus, $u$ turns out to be a positive $\lambda$-eigenfunction with $\lambda>\lambda_1$, against Proposition \ref{spec} $(iii)$. This concludes the proof.
\end{proof}

\noindent
We point out the following technical lemma:

\begin{lem}\label{6ps}
$\Phi_\pm\in C^1(X(\Omega))$ satisfies ${\bf PS}$ in $X(\Omega)$.
\end{lem}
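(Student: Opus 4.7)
The plan is to verify the $C^1$-regularity as a routine consequence of the subcritical growth $\mathbf{H'}_6(i)$ together with continuity of the associated Nemytskii operators, and then to focus on the Palais–Smale condition. I will work with $\Phi_+$; the case of $\Phi_-$ is symmetric. Let $(u_n)\subset X(\Omega)$ satisfy $(\Phi_+(u_n))$ bounded in $\R$ and $\Phi_+'(u_n)\to 0$ in $X(\Omega)^*$. As in Proposition \ref{2ps}, the strategy is (i) prove that $(u_n)$ is bounded in $X(\Omega)$, and then (ii) extract a strongly convergent subsequence via the $(\mathbf{S})$-property of $A$.

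For the boundedness step, the key observation is a uniform bound on $g_+$. By $\mathbf{H'}_6(ii)$, for every $\eps>0$ there exists $M>0$ such that $|f(x,t)-\lambda|t|^{p-2}t|\le\eps|t|^{p-1}$ for $|t|\ge M$, a.e.\ in $\Omega$; combining with $\mathbf{H'}_6(i)$ to control $|t|\le M$, I get $C_\eps>0$ such that
\[
|g_+(x,t)|\le\eps|t|^{p-1}+C_\eps\quad\text{a.e.\ in }\Omega,\ \forall t\in\R.
\]
Using this bound together with H\"older and the continuous embedding $X(\Omega)\hookrightarrow L^p(\Omega)$, the Nemytskii operator induced by $g_+$ satisfies, for every $u\in X(\Omega)$,
\[
\Bigl\|\int_\Omega g_+(x,u)\,\cdot\,{\rm d}x\Bigr\|_*\le C\eps\|u\|^{p-1}+C'_\eps.
\]
Applying Lemma \ref{6bpm} and the definition of $\Phi'_+$,
\[
\rho\|u_n\|^{p-1}\le\|A(u_n)-\lambda B_+(u_n)\|_*\le\|\Phi'_+(u_n)\|_*+C\eps\|u_n\|^{p-1}+C'_\eps,
\]
and choosing $\eps<\rho/C$ forces $(u_n)$ to be bounded.

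For the compactness step, passing to a relabeled subsequence I have $u_n\weaklyto u$ in $X(\Omega)$ and $u_n\to u$ in $L^p(\Omega)$ and $L^r(\Omega)$ by compact embedding (since $r<p^*_s$). Then
\[
\langle A(u_n),u_n-u\rangle=\langle\Phi'_+(u_n),u_n-u\rangle+\lambda\langle B_+(u_n),u_n-u\rangle+\int_\Omega g_+(x,u_n)(u_n-u)\,{\rm d}x.
\]
The first term tends to $0$ since $\|\Phi'_+(u_n)\|_*\to 0$ and $(u_n-u)$ is bounded; the second does so because $B_+$ is completely continuous (as noted just before Lemma \ref{6bpm}); the third vanishes by H\"older, using that $(g_+(\cdot,u_n))$ is bounded in $L^{p'}(\Omega)+L^{r'}(\Omega)$ while $u_n\to u$ in $L^p(\Omega)\cap L^r(\Omega)$. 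Thus $\langle A(u_n),u_n-u\rangle\to 0$, and the $(\mathbf{S})$-property of $A$ yields $u_n\to u$ in $X(\Omega)$.

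I expect the main technical point to be the boundedness step, specifically the interplay between the two asymptotic bounds $|g_+(x,t)|\le\eps|t|^{p-1}+C_\eps$ and the non-resonance estimate of Lemma \ref{6bpm}. The latter plays the role, in this asymptotically $p$-linear setting, that coercivity played in the sublinear case of Section \ref{coercive}: it converts a sublinear-in-$\|u\|^{p-1}$ correction into an a priori bound. Once boundedness is secured, the remainder is routine via the $(\mathbf{S})$-property, exactly as in the proof of Proposition \ref{2ps}.
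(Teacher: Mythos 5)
Your proof is correct and follows essentially the same route as the paper: the boundedness of a Palais--Smale sequence comes from the non-resonance estimate of Lemma \ref{6bpm} combined with the bound $|g_+(x,t)|\le\eps|t|^{p-1}+C_\eps$ (the paper fixes $\eps$ as a specific multiple of $\rho$ and argues by contradiction, while you keep $\eps$ free and absorb the term directly, which is an immaterial difference), and the convergence of the bounded subsequence then follows from the $({\bf S})$-property exactly as in Proposition \ref{2ps}.
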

\begin{proof}
We deal with $\Phi_+$ (the argument for $\Phi_-$ is analogous). Let $(u_n)$ be a sequence in $X(\Omega)$ such that $(\Phi_+(u_n))$ is bounded in $\R$ and $\Phi'_+(u_n)\to 0$ in $X(\Omega)^*$. We prove that $(u_n)$ is bounded, arguing by contradiction: assume that (passing if necessary to a subsequence) $\|u_n\|\to\infty$ as $n\to\infty$. Let $\rho>0$ be as in Lemma \ref{6bpm}. By ${\bf H'}_6 (i),(iii)$ we have $g_+(x,t)=o(t^{p-1})$ as $t\to\infty$, so there exists $C_\rho>0$ such that a.e. in $\Omega$ and for all $t\in\R$
\[|g_+(x,t)|\le\frac{\rho\lambda_1}{2}(t^+)^{p-1}+C_\rho.\]
For all $n\in\N$, $v\in X(\Omega)$ we have
\begin{align*}
\big|\langle A(u_n)-\lambda B_+(u_n),v\rangle\big| &\le \big|\langle \Phi'_+(u_n),v\rangle\big|+\int_\Omega |g_+(x,u) v|\,{\rm d}x \\
&\le \|\Phi'_+(u_n)\|_*\|v\|+\frac{\rho\lambda_1}{2}\|u_n^+\|_p^{p-1}\|v\|_p+C_\rho\|v\|_1 \\
&\le \|\Phi'_+(u_n)\|_*\|v\|+\frac{\rho}{2}\|u_n\|^{p-1}\|v\|+C\|v\|.
\end{align*}
So, using also Lemma \ref{6bpm}, we have for all $n\in\N$
\[\rho\|u_n\|^{p-1}\le\|A(u_n)-\lambda B_+(u_n)\|_*\le\frac{\rho}{2}\|u_n\|^{p-1}+o(\|u_n\|^{p-1}),\]
a contradiction as $n\to\infty$. Thus, $(u_n)$ is bounded, and as in the proof of Proposition \ref{2ps} we conclude that $(u_n)$ has a convergent subsequence.
\end{proof}

\noindent
Now we are ready to prove our main result:
\vskip6pt
\noindent
{\em Proof of Theorem \ref{6mult}.} In ${\bf H'}_6 (i)$ we can always set $p<r<p^*_s$. Choose real numbers $\mu<\alpha<\lambda_1<\beta<\lambda$. By ${\bf H'}_6 (i),(iii)$ there exists $C_\alpha>0$ such that a.e. in $\Omega$ and for all $t\in\Omega$
\[|F(x,t^+)|\le\frac{\alpha}{p}|t|^p+C_\alpha |t|^r.\]
For all $u\in X(\Omega)$ we have
\begin{align*}
\Phi_+(u) &\ge \frac{\|u\|^p}{p}-\frac{\alpha}{p}\|u\|_p^p-C_\alpha\|u\|_r^r \\
&\ge \Big(1-\frac{\alpha}{\lambda_1}\Big)\frac{\|u\|^p}{p}-C\|u\|^r.
\end{align*}
So, we can find $R,c>0$ such that
\begin{equation}\label{6mp}
\inf_{u\in\partial B_R(0)}\Phi_+(u)=c.
\end{equation}
By ${\bf H'}_6 (i),(ii)$ there exists $C_\beta>0$ such that a.e. in $\Omega$ and for all $t\in\Omega$
\[F(x,t^+)\ge\frac{\beta}{p}(t^+)^p-C_\beta.\]
Let $u_1\in X(\Omega)$ be a positive $\lambda_1$-eigenfunction (recall Proposition \ref{spec} $(ii)$), then for all $\tau>0$ we have
\begin{align*}
\Phi_+(\tau u_1) &= \frac{\tau^p\|u_1\|^p}{p}-\int_\Omega F(x,\tau u_1) {\rm d}x \\
&\le \frac{\tau^p\|u_1\|^p}{p}-\frac{\beta\tau^p}{p}\|u_1\|_p^p+C \\
&\le \tau^p\Big(1-\frac{\beta}{\lambda_1}\Big)\frac{\|u_1\|^p}{p}+C,
\end{align*}
and the latter tends to $-\infty$ as $\tau\to\infty$. So, $\Phi_+$ exhibits the 'mountain pass geometry'. By Lemma \ref{6ps}, $\Phi_+$ satisfies ${\bf PS}$ in $X(\Omega)$. Hence, by the Mountain Pass Theorem, there exists $u_+\in K(\Phi_+)$ such that $\Phi_+(u_+)\ge c$, with $c$ as in \eqref{6ps}. In particular, then, $u_+\ne 0$. Reasoning as in the proof of Lemma \ref{2phipm} $(i)$ we see that $u_+(x)\ge 0$ a.e. in $\Omega$, hence $u\in K(\Phi)$ turns out to be a non-negative, non-zero solution of \eqref{prob}.
\vskip2pt
\noindent
In a similar way, working on $\Phi_-$, we produce a non-positive, non-zero solution $u_-$ of \eqref{prob} (in particular, $u_+\ne u_-$). \qed

\begin{rem}\label{6not}
We could have denoted $f(x,t)=\lambda|t|^{p-2}t+g(x,t)$ for all $(x,t)\in\Omega\times\R$ as in Section \ref{psuper}, with $g(x,t)=o(|t|^{p-1})$ at infinity. But in Theorem \ref{6mult}, this would have lead to unnatural condition on the behavior of $g(x,\cdot)$ at zero.
\end{rem}

\section{Poho\v zaev identity and nonexistence}
\label{pohoz}

\noindent
In this section we discuss possible non-existence results for problems involving the 
operator $(-\Delta)^s_p$ via a convenient Poho\v zaev identity. We focus first on the autonomous equation
\begin{equation} \label{7prob}
(- \Delta)_p^s\, u  = f(u) \quad \text{in $\R^n$,}
\end{equation}
where $0<s<1<p<N$, $f\in C(\R)$, and we set for all $(x,t)\in\Omega\times\R$
\[F(t)=\int_0^t f(\tau){\rm d}\tau.\]
A {\em weak solution} of \eqref{7prob} is a function $u\in W^{s,p}(\R^N)$ such that for all $v\in W^{s,p}(\R^N)$
\[\langle A(u),v\rangle=\int_{\R^N} f(u)v\,{\rm d}x.\]
As usual, weak solutions of \eqref{7prob} can be detected as the critical points of an energy functional $\Phi\in C^1(W^{s,p}(\R^N))$ defined by setting for all $u\in W^{s,p}(\R^N)$
\[\Phi(u)=\frac{[u]_{s,p}^p}{p}-\int_{\R^N}F(u){\rm d}x,\]
by assuming convenient growth conditions on $f$.
\vskip2pt
\noindent
Let $u\in W^{s,p}(\R^N)$ be a weak solution of \eqref{7prob}. We define a continuous path $\gamma_u:]0,1]\to W^{s,p}(\R^N)$ by setting 
for all $\theta\in]0,1]$ and $x\in\R^N$
\[
\gamma_u(\theta)(x):=u(\theta x).
\]
A simple scaling argument shows that, for all $\theta\in]0,1]$,
\[
\Phi\circ\gamma_u(\theta)=\frac{\theta^{sp-N}}{p}[u]_{s,p}^p-\theta^{-N}\int_{\R^N} F(u){\rm d}x,
\]
and
\[\frac{\rm d}{{\rm d}\theta}\restr{\Phi\circ\gamma_u(\theta)}{\theta=1}=\frac{sp-N}{p}[u]_{s,p}^p+N\int_{\R^N} F(u){\rm d}x.\]
The general form of the {\em Poho\v{z}aev identity} for problem \eqref{7prob} is
\[\frac{\rm d}{{\rm d}\theta}\restr{\Phi\circ\gamma_u(\theta)}{\theta=1}=0,\]
which, in our case, is easily seen to be equivalent to the following formula:
\begin{equation}\label{poho}
\int_{\R^N}\Big(\frac{N-sp}{Np}f(u)u-F(u)\Big){\rm d}x=0.
\end{equation}
Identity \eqref{poho} is a major tool to prove non-existence results for problem \eqref{7prob}. Nevertheless, it requires a more sophisticated machinery, as we need to deduce that
\[\langle\Phi'(u),(x\cdot\nabla u)\rangle=0,\]
and hence we need good regularity results in order to justify that $v=x\cdot\nabla u$ 
is an admissible test function for problem \eqref{7prob}. Such regularity theory is not available yet.

\begin{rem}\label{changwang}
In the semi-linear case $p=2$, for which the regularity theory is well established, 
a version of \eqref{poho} has recently be proved by {\sc Chang \& Wang} \cite[Proposition 4.1]{CW}. Namely, for any weak solution $u\in H^s(\R^N)$ of
\[(- \Delta)^s\, u  = f(u) \quad \text{in $\R^n$,}\]
we have
\[\int_{\R^N}\Big(\frac{N-2s}{Np}f(u)u-F(u)\Big){\rm d}x=0.\]
\end{rem}

\noindent
Now we introduce a bounded, smooth domain $\Omega$ and couple \eqref{7prob} with zero Dirichlet conditions outside $\Omega$, i.e., we consider the problem
\begin{equation} \label{7dir}
\begin{cases}
(- \Delta)_p^s\, u  = f(u) & \text{in $\Omega$} \\
u  = 0 & \text{in $\R^N \setminus \Omega$.}
\end{cases}
\end{equation}
Obviously, a {\em weak solution} of \eqref{7dir} is understood as $u\in X(\Omega)$ such that, for all $v\in X(\Omega)$,
\[
\langle A(u),v\rangle=\int_\Omega f(u)v\,{\rm d}x.
\]
In this framework, things become even more involved due to the presence of a boundary contribution in the identity. A reasonable candidate to play the role of \eqref{poho}, for a weak solution $u\in X(\Omega)$ of \eqref{7dir}, is the following formula:
\begin{equation}\label{pohodir}
\int_\Omega\Big(\frac{N-sp}{Np}f(u)u-F(u)\Big){\rm d}x=-M\int_{\partial\Omega}\Big(\frac{u}{d(x)^\gamma}\Big)^2(x\cdot\nu)\,{\rm d}\sigma,
\end{equation}
where $M>0$, $\gamma\in(0,1)$ depend on $s$, $p$, and $N$, $\nu$ denotes the outward normal unit vector to $\partial\Omega$ (see ${\bf RC}$ and the related discussion in Section \ref{coercive}). If $\Omega$ is star-shaped, by means of \eqref{pohodir} one should be able to prove some non-existence results for problem \eqref{7dir} of the following type:

\begin{prop}\label{7nonex}
If $f\in C(\R)$ satisfies for all $t\in\R$
\[\frac{N-sp}{Np}f(t)t-F(t)\ge 0,\]
then problem \eqref{7dir} does not admit any positive bounded weak solution. Moreover, if the inequality above is strict for all $t\in\R\setminus\{0\}$, then problem \eqref{7dir} does not admit any non-zero bounded solution.
\end{prop}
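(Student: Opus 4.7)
The plan is to argue by contradiction in both parts, combining the identity \eqref{pohodir} with the regularity/Hopf-type content of \textbf{RC} and with the elementary geometric consequences of star-shapedness.

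\textbf{Setup.} Assume $\Omega$ is star-shaped with respect to the origin, so that $x\cdot\nu(x)\ge 0$ for $\sigma$-a.e.\ $x\in\partial\Omega$. By the divergence theorem $\int_{\partial\Omega} x\cdot\nu\,{\rm d}\sigma = N|\Omega|>0$, hence $x\cdot\nu>0$ on a subset of $\partial\Omega$ of positive surface measure.

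\textbf{First part.} Let $u\in X(\Omega)\cap L^\infty(\Omega)$ be a positive weak solution of \eqref{7dir}. By \textbf{RC}$(i)$ we have $u\in C^{0,\alpha}_d(\overline\Omega)$, and, in addition, the Hopf-type conclusion of \textbf{RC}$(i)$ gives $u(x)\,d(x)^{-\gamma}>0$ on the whole of $\overline\Omega$, hence in particular on $\partial\Omega$. Therefore the integrand $(u/d^{\gamma})^2(x\cdot\nu)$ on the right-hand side of \eqref{pohodir} is nonnegative everywhere on $\partial\Omega$ and strictly positive on a set of positive measure, so the right-hand side of \eqref{pohodir} is strictly negative. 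On the other hand, the pointwise assumption
\[
\frac{N-sp}{Np}f(t)t-F(t)\ge 0\qquad\text{for all }t\in\R
\]
forces the left-hand side of \eqref{pohodir} to be nonnegative. This contradiction rules out positive bounded solutions.

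\textbf{Second part.} Now suppose the pointwise inequality is strict for every $t\neq 0$, and let $u$ be any bounded non-zero weak solution. Since $u\not\equiv 0$, the set $\{x\in\Omega:u(x)\neq 0\}$ has positive Lebesgue measure; combined with the nonnegativity of the integrand elsewhere, this yields
\[
\int_\Omega\Big(\tfrac{N-sp}{Np}f(u)u-F(u)\Big){\rm d}x>0.
\]
But the right-hand side of \eqref{pohodir} is $\le 0$ by star-shapedness together with $(u/d^{\gamma})^2\ge 0$, a contradiction. Note that in this step we do not need any sign or Hopf-type information on $u$, which is why the sign assumption on $u$ can be dropped in exchange for the strict inequality on $f$.

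\textbf{Main obstacle.} Modulo \eqref{pohodir} and \textbf{RC}, the argument is essentially a two-line sign comparison. The real work, and the reason the author phrases the statement in the conditional, lies in proving \eqref{pohodir} itself: one needs enough regularity up to $\partial\Omega$ to make sense of $u/d^\gamma$ as a continuous boundary trace, to justify differentiating $\Phi\circ\gamma_u$ through the scaling path at $\theta=1$, and to identify the resulting boundary contribution with the specific quadratic form $(u/d^\gamma)^2(x\cdot\nu)$. For $p=2$ this has been carried out (see Remark \ref{changwang}), but the quasilinear analogue for $p\neq 2$ rests on a fine boundary regularity theory for $(-\Delta)^s_p$ that, as the authors note, is not yet available.
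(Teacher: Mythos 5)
Your proposal is correct and is exactly the argument the paper intends: the authors state Proposition \ref{7nonex} without proof, as the obvious sign comparison between the two sides of \eqref{pohodir} under star-shapedness (together with the trace/Hopf information from {\bf RC}), which is precisely what you carry out. You have also correctly located the real gap, which the paper itself concedes: the identity \eqref{pohodir} is only a ``reasonable candidate'' for $p\neq 2$, since the boundary regularity needed to justify it (and to make $x\cdot\nabla u$ an admissible test function) is not yet available, so the proposition remains conditional on that identity.
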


\noindent
If we reduce ourselves to the pure power-type reaction terms $f(t)=|t|^{r-2}t$ ($r>0$), then the assumption of Proposition \ref{7nonex} becomes $r\ge p^*_s$, so non-zero solutions are ruled out for $r>p^*_s$ (as expected).

\begin{rem}\label{rosserra1}
In the semi-linear case $p=2$, {\sc Ros Oton \& Serra} \cite[Theorem 1.1]{RS1} have proved the following special case of \eqref{pohodir}: if $u\in H^s(\R^N)$ is a weak solution of
\[\begin{cases}
(- \Delta)^s\, u  = f(u) & \text{in $\Omega$} \\
u  = 0 & \text{in $\R^N \setminus \Omega$,}
\end{cases}\]
then
\[\int_\Omega\Big(\frac{N-2s}{2N}f(u)u-F(u)\Big){\rm d}x=-\frac{\Gamma(1+s)^2}{2N}\int_{\partial\Omega}\Big(\frac{u}{d(x)^s}\Big)^2(x\cdot\nu)\,{\rm d}\sigma.\]
Such identity has been applied to prove non-existence results of the type discussed above (see \cite[Corollaries 1.2, 1.3]{RS1}).
\end{rem}

\begin{rem}\label{rosserra2}
In the non-linear case $p\ne 2$, other approaches may lead to non-existence results. For instance, again {\sc Ros Oton \& Serra} \cite{RS2} have obtained the following result for problem \eqref{prob}: if $f\in C^{0,1}_{\rm loc}(\overline\Omega\times\R)$ is of supercritical type, i.e., if
\[(N-sp)f(x,t)t-NpF(x,t)-p x\cdot F_x(x,t)>0\]
holds for all $(x,t)\in\overline\Omega\times\R$, then \eqref{prob} does not admit any non-zero bounded solution 
which belong $C^{1,\alpha}(\Omega)$ ($0<\alpha<1$).
\end{rem}

\bigskip

\bigskip


\begin{thebibliography}{99}

\bibitem{A}
D.\ Applebaum,
\newblock L\'evy processes - From probability fo finance and quantum groups,
\newblock {\em Notices Amer. Math. Soc.} {\bf 51} (2004) 1336--1347.

\bibitem{BC}
C.\ Bjorland, L.A.\ Caffarelli, A.\ Figalli,
\newblock Non-local gradient dependent operators,
\newblock {\em Adv. Math.} {\bf 230} (2012) 1859--1894.

\bibitem{BF}
L.\ Brasco, G.\ Franzina,
Convexity properties of Dirichlet integrals and Picone-type inequalities,
\newblock preprint.

\bibitem{CS1}
X.\ Cabr\'e, Y.\ Sire,
\newblock Nonlinear equations for fractional Laplacians I: Regularity, maximum principles, and Hamiltonian estimates,
\newblock {\em Ann. Inst. Henri Poincar\'e (C) Nonlinear Analysis} {\bf 31} (2014) 23--53.

\bibitem{CS2}
X.\ Cabr\'e, Y.\ Sire,
\newblock Nonlinear equations for fractional Laplacians II: Existence, uniqueness, and qualitative properties of solutions,
\newblock {\em Trans. Amer. Math. Soc.}, to appear.

\bibitem{C}
L.A.\ Caffarelli,
\newblock Nonlocal equations, drifts and games, Nonlinear Partial Differential Equations,
\newblock Abel Symposia {\bf 7} (2012) 37--52.

\bibitem{CRS}
L.A.\ Caffarelli, J.-M.\ Roquejoffre, Y.\ Sire,
Variational problems with free boundaries for the fractional Laplacian,
\newblock {\em J. Eur. Math. Soc.} {\bf 12} (2010) 1151--1179.

\bibitem{CS}
L.A.\ Caffarelli, L.\ Silvestre,
\newblock An extension problem related to the fractional Laplacian,
\newblock {\em Comm. Partial Differential Equations} {\bf 32} (2007) 1245--1260.

\bibitem{CLM}
A.\ Chambolle, E.\ Lindgren, R.\ Monneau,
\newblock A H\"older infinity Laplacian,
\newblock {\em ESAIM Control Optim. Calc. Var.} {\bf 18} (2012) 799--835.

\bibitem{C1}
K.-C.\ Chang,  
\newblock Infinite-dimensional Morse theory and multiple solution problems, 
\newblock {\bf 6} Progr. Nonlinear Differential Equations Appl., Boston, MA, Birkhauser 1993.

\bibitem{CG}
K.-C.\ Chang, N.\ Ghoussoub,
\newblock The Conley index and the critical groups via an extension of Gromoll-Meyer theory,
\newblock {\em Topol. Methods Nonlinear Anal.} {\bf 7} (1996) 77--93.

\bibitem{CW}
X.\ Chang, Z-Q.\ Wang,
\newblock Ground state of scalar field equations involving a fractional Laplacian with general nonlinearity,
\newblock {\em Nonlinearity} {\bf 26} (2013) 479--494.

\bibitem{CH}
J.-N.\ Corvellec, A.\ Hantoute,
\newblock Homotopical stability of isolated critical points of continuous functionals,
\newblock {\em Set-Valued Anal.} {\bf 10} (2002) 143--164.

\bibitem{DLP}
M.\ Degiovanni, S.\ Lancelotti, K.\ Perera,
\newblock Nontrivial solutions of $p$-superlinear $p$-Laplacian problems via a cohomological local splitting,
\newblock {\em Commun. Contemp. Math.} {\bf 12} (2010) 475--486.

\bibitem{DKP}
A.\ Di Castro, T.\ Kuusi, G.\ Palatucci,
\newblock Local behavior of fractional $p$-minimizers,
\newblock preprint.

\bibitem{DPV}
E.\ Di Nezza, G.\ Palatucci, E.\ Valdinoci,
\newblock Hitchhiker's guide to the fractional {S}obolev spaces,
\newblock {\em Bull. Sci. Math.} {\bf 136} (2012) 521--573.

\bibitem{FR}
E.R.\ Fadell, P.H.\ Rabinowitz,
\newblock Generalized cohomological index theories for Lie group actions with an application to bifurcation questions for Hamiltonian systems,
\newblock {\em Invent. Math.} {\bf 45} (1978) 139--174.

\bibitem{FP}
G. Franzina, G. Palatucci,
Fractional $p$-eigenvalues,
\newblock {\em Riv. Mat. Univ. Parma}, to appear.

\bibitem{GPM}
J.P.\ Garc\`{\i}a Azorero, I.\ Peral Alonso, J.J.\ Manfredi,
\newblock Sobolev versus H\"older local minimizers and global multiplicity for some quasilinear elliptic equations,
\newblock {\em Commun. Contemp. Math.} {\bf 2} (2000) 385--404.

\bibitem{IS1}
A.\ Iannizzotto, M.\ Squassina,
\newblock  $1/2$-Laplacian problems with exponential nonlinearity,
\newblock {\em J. Math. Anal. Appl.} {\bf 414} (2014) 372--385.

\bibitem{IS}
A.\ Iannizzotto, M.\ Squassina,
\newblock Weyl-type laws for fractional $p$-eigenvalue problems,
\newblock {\em Asymptotic Anal.}, to appear.

\bibitem{ISMosc}
A.\ Iannizzotto, S.\ Mosconi, M.\ Squassina,
\newblock $W^{s,2}$ versus weighted $C^0$ minimizers,
\newblock in preparation.

\bibitem{L}
N.\ Laskin,
\newblock Fractional Schr\"odinger equations,
\newblock {\em Phys. Rev. E} {\bf 66} (2002) 056108.

\bibitem{LZ}
G.\ Li, H.-S.\ Zhou,
\newblock Asymptotically linear Dirichlet problem for the $p$-Laplacian
\newblock {\em Nonlinear Anal.} {\bf 43} (2001) 1043--1055.

\bibitem{LL}
E.\ Lindgren, P.\ Lindqvist,
\newblock Fractional eigenvalues,
\newblock {\em Calc. Var.} {\bf 49} (2014) 795--826.

\bibitem{LL2}
S.\ Liu, S.\ Li,
\newblock Existence of solutions for asymptotically linear $p$-Laplacian equations,
\newblock {\em Bull. London Math. Soc.} {\bf 36} (2004) 81--87.

\bibitem{LL1}
J.\ Liu, S.\ Liu,
\newblock The existence of multiple solutions to quasilinear elliptic equations,
\newblock {\em Bull. London Math. Soc.} {\bf 37} (2005) 592--600.

\bibitem{MW}
J.\ Mahwin, M.\ Willem,
\newblock Critical Point Theory and Hamiltonian Systems,
\newblock Applied Math. Sciences, Springer-Verlag, New York, 1989.

\bibitem{MK}
R.\ Metzler, J.\ Klafter,
\newblock The random walk's guide to anomalous diffusion: a fractional dynamics approach,
\newblock {\em Phys. Rep.} {\bf 339} (2000) 1--77.

\bibitem{MK1}
R.\ Metzler, J.\ Klafter,
\newblock The restaurant at the random walk: recent developments in the description of anomalous transport by fractional dynamics,
\newblock {\em J. Phys. A} {\bf 37} (2004) 161--208.

\bibitem{M}
J.\ Milnor,
\newblock Morse Theory,
\newblock Princeton University Press, NJ, 1989.

\bibitem{P1}
K.\ Perera,
\newblock Homological local linking,
\newblock {\em Abstr. Appl. Anal.} {\bf 3} (1998) 181--189.

\bibitem{P}
K.\ Perera,
\newblock Nontrivial critical groups in $p$-Laplacian problems via the Yang index,
\newblock {\em Topol. Methods Nonlinear Anal.} {\bf 21} (2003) 301--309.

\bibitem{PAO}
K.\ Perera, R.P.\ Agarwal, D.\ O'Regan,
\newblock {\em Morse theoretic aspects of {$p$}-{L}aplacian type operators}, volume 161 of {\em Mathematical Surveys and Monographs},
\newblock American Mathematical Society, Providence, RI, 2010.

\bibitem{PS1}
K.\ Perera, I.\ Sim,
\newblock $p$-Laplace equations with singular weights,
\newblock {\em Nonlinear Anal.} {\bf 99} (2014) 167--176.

\bibitem{PS}
K.\ Perera, A. Szulkin,
\newblock $p$-Laplacian problems where the nonlinearity crosses an eigenvalue,
\newblock {\em Discrete Contin. Dyn. Syst.} {\bf 13} (2005) 743--753.

\bibitem{RS}
X.\ Ros-Oton, J.\ Serra,
\newblock The Dirichlet problem for the fractional Laplacian: regularity up to the boundary,
\newblock {\em J. Math. Pures Appl.}, to appear.

\bibitem{RS1}
X. Ros-Oton, J. Serra,
\newblock The Poho\v zaev identity for the fractional laplacian,
\newblock {\em Arch. Rat. Mech. Anal.}, to appear.

\bibitem{RS2}
X.\ Ros-Oton, J.\ Serra,
\newblock Nonexistence results for nonlocal equations with critical and supercritical nonlinearities,
\newblock preprint.

\bibitem{SV}
R.\ Servadei, E.\ Valdinoci,
\newblock Mountain pass solutions for non-local elliptic operators,
\newblock {\em J. Math. Anal. Appl.} {\bf 389} (2012), 887--898.

\bibitem{SV1}
R.\ Servadei, E.\ Valdinoci,
\newblock Variational methods for non-local operators of elliptic type,
\newblock {\em Discrete Contin. Dyn. Syst.} {\bf 33} (2013) 2105--2137.

\bibitem{SV2}
R.\ Servadei, E.\ Valdinoci,
\newblock Lewy-Stampacchia type estimates for variational inequalities driven by (non)local operators,
\newblock {\em Rev. Mat. Iberoam.} {\bf 29} (2013) 1091--1126.

\bibitem{SV3}
R.\ Servadei, E.\ Valdinoci,
\newblock The Brezis-Nirenberg result for the fractional Laplacian,
\newblock {\em Trans. Amer. Math. Soc.}, to appear.

\bibitem{ZLLF}
Z.\ Zhang, S.\ Li, S.\ Liu, W.\ Feng,
\newblock On an asymptotically linear elliptic Dirichlet problem,
\newblock {\em Abstr. Appl. Anal.} {\bf 7} (2002) 509--516.

\end{thebibliography}
\end{document}